\documentclass[11pt]{amsart}
\usepackage[T1]{fontenc}
\usepackage{mathptmx}

\usepackage{comment}
\usepackage[colorlinks=true,linkcolor=magenta,citecolor=blue]{hyperref}     
\usepackage[capitalise]{cleveref}   
\numberwithin{equation}{section}
\usepackage{mathtools}
\usepackage{amsmath, amssymb,amsthm}
\usepackage{array}
\usepackage{graphicx}
\usepackage{float}
\usepackage{caption,subcaption}
\usepackage{tikz}
\usepackage{tikz-cd}
\usetikzlibrary{matrix,arrows,decorations.pathmorphing}
\usepackage[mathscr]{euscript}
\usepackage{MnSymbol}

\def\A{\mathbb{A}}
\def\Dbb{\mathbb{D}}
\def\R{\mathbb{R}}
\def\C{\mathbb{C}}
\def\Q{\mathbb{Q}}
\def\Z{\mathbb{Z}}
\def\N{\mathbb{N}}
\def\P{\mathbb{P}}
\def\K{\mathbb{K}}

\def\O{\mathcal{O}}
\def\X{\mathfrak{X}}
\def\Y{\mathfrak{Y}}

\def\B{\mathfrak{B}}
\def\L{\mathfrak{L}}
\def\Uscr{\mathscr{U}}
\def\Fscr{\mathscr{F}}
\def\Gscr{\mathscr{G}}
\def\Cscr{\mathscr{C}}
\def\Ascr{\mathscr{A}}
\def\Vscr{\mathscr{V}}
\def\Dscr{\mathscr{D}}
\def\Mscr{\mathscr{M}}
\def\Vscr{\mathscr{V}}
\def\Zscr{\mathscr{Z}}

\def\E{\mathscr{E}}
\def\D{\mathscr{D}}

\def\M{\mathscr{M}}

\newtheorem{thm}{Theorem}[section]
\newtheorem{lem}[thm]{Lemma}
\newtheorem{ex}[thm]{Example}
\newtheorem{rem}[thm]{Remark}

\newtheorem{prop}[thm]{Proposition}
\newtheorem{claim}[thm]{Claim}

\topmargin -0.2cm
\oddsidemargin 0.5cm
\evensidemargin 0.5cm
\textheight 22cm
\textwidth 15cm

\newcommand\Mbar {\overline{\Mscr}}
\newcommand\Qbar {\overline{\Q}}
\newcommand\Dbar {\overline{\Dscr}}
\newcommand\Lbar {\overline{\L}}

\newcommand\an{\mathrm{an}}
\newcommand\supp{\mathrm{supp}}
\newcommand\Jac{\mathrm{Jac}}
\newcommand\vol{\operatorname{vol}}
\newcommand\gal{\operatorname{Gal}}

\newcommand\numberthis{\addtocounter{equation}{1}\tag{\theequation}}
\DeclareMathOperator{\ddc}{dd^c}
\DeclareMathOperator{\mdeg}{mdeg}

\title{Arithmetic properties of families of plane polynomial automorphisms}
\author{Yugang ZHANG}
\address{Laboratoire de Math\'ematiques d’Orsay, Université Paris-Saclay, 307 Rue Michel Magat, 91405 Orsay,
France}
\email{\href{yugang.zhang@universite-paris-saclay.fr}{yugang.zhang@universite-paris-saclay.fr}}
\date{\today}
\begin{document}

\begin{abstract}
    Given an algebraic family $f\colon\Lambda\times \A^2 \to \Lambda\times \A^2$ of plane polynomial automorphisms of H\'enon type parameterized by a quasi-projective curve, defined over a number field $\K,$ we investigate certain arithmetic properties of periodic points contained in a family of subvarieties $X \subset \Lambda \times \A^2 \twoheadrightarrow \Lambda$. 
    
    First, consider $X$ as a curve. We prove that the set of parameters $t\in\Lambda(\Qbar)$, such that $X_t$ is periodic, has bounded height. This generalizes a result of Patrick Ingram. Moreover, if $X$ is non-periodic, then under some mild conditions --- such as when the family is dissipative --- we show that there are, in fact, only finitely many periodic parameters. This extends a result of Charles Favre and Romain Dujardin. 

    Second, let $X$ be a family of curves. Assuming $X$ is non-degenerate, we establish a uniform bound on the number of periodic points in each curve $X_t$, $t\in \Lambda(\Qbar)$ and show that the set of these periodic points have bounded height in $\Lambda\times \A^2$ as well. We then examine in more detail the non-degeneracy property in the case of dissipative families of quadratic H\'enon maps.
\end{abstract}

\maketitle

\section{introduction}
\subsection{Marked points and relative Manin-Mumford}
\subsubsection{Background and motivation}
Let $f_t \colon \P^1 \to \P^1$ be a complex algebraic family of rational maps of degree $d \geq 2$, parameterized by a smooth complex quasi-projective variety $\Lambda$. By a \emph{marked point}, we mean a morphism $\sigma \colon \Lambda \to \P^1$. If the pair $(f_t, \sigma)$ is \emph{non-isotrivial} (i.e., $f_t$ and $\sigma$ do not depend on $t$, up to conjugating by a family of Möbius transformations and up to a base change), then by Montel's theorem, there are infinitely many $t$ such that $\sigma(t)$ is preperiodic for $f_t$ (see e.g., \cite[Lemma 2.3.]{DujardinFavre08}). In particular, for a non-isotrivial family of elliptic curves, the set of torsion points on the image of a section is infinite (see e.g., \cite{DemarcoANT,MZ10}).

In the case of higher relative dimension, Gao and Habegger \cite{GaoHabeggerManinMumford} recently proved the so-called relative Manin-Mumford conjecture, which can be stated as follows. Let $\pi\colon \Ascr\to \Lambda$ be a family of abelian varieties of relative dimension $g\geq 1,$ defined over an algebraically closed field of characteristic 0. Let $X$ be an irreducible subvariety of $\Ascr.$ Assume that $\Z X \coloneqq \bigcup_{N\in \Z}[N] X$ is Zariski dense in $\Ascr.$ If the torsion points in $X$ are Zariski dense, then $\dim X \geq g.$ In particular, if $\Ascr \to \Lambda$ is a family of simple abelian varieties and $X$ has dimension 1, then, in contrast to the case of relative dimension 1, there exist only finitely many torsion points on $X.$
Several earlier results on the relative Manin-Mumford conjecture can be found in e.g., \cite{CTZ23,DemarcoMavraki20,Habegger13,KuhneRelativeBogomolov,MZ15,Stoll}. The conjecture was inspired by Zhang's ICM talk \cite{ZhangICM} and was proposed by Pink \cite{Pink05} and Zannier \cite{Zannier12}.

\subsubsection{Dynamical settings and main results}
Our first goal in this work is to provide an analogue of relative Manin-Mumford in the context of families of dynamical systems. Let $f\colon \Lambda\times \A^2 \to \Lambda\times \A^2$ be a family of plane polynomial automorphisms parameterized by a smooth quasi-projective curve $\Lambda$, defined by $f(t,z)=(t,f_t(z))$. Assume both $f$ and $\Lambda$ are defined over a number field $\K.$ A \emph{marked point} is a morphism $\sigma\colon \Lambda \to \A^2$. Since we are interested in iterating the marked point by $f$, it is convenient to extend the term ``marked point'' to also refer to the graph morphism $\Lambda\to \Lambda\times \A^2$. When we say that a marked point has a periodic point, we mean that there exists $t\in\Lambda$ such that $\sigma(t)$ is periodic for $f_t.$ A marked point is said to be \emph{(globally) periodic} if there exists two distinct positive integers $n$ and $m$ such that  $f^n_t(\sigma(t))=f^m_t(\sigma(t))$ for all $t.$ In this work, we investigate the number of periodic points associated with a marked point.

A plane polynomial automorphism is called a \emph{generalized H\'enon map} if it is a finite composition of maps of the form $(x,y)\mapsto (y,p(y)-\delta x),$ where $p$ is a non-invertible monic polynomial and $\delta$ is a non-zero constant.  Such a map is said to be of \emph{H\'enon type} if it is conjugate, via a polynomial automorphism, to a generalized H\'non map. Friedland and Milnor \cite{FriedlandMilnor} showed that the only dynamically interesting plane polynomial automorphisms are of H\'enon type, in the sense that their first dynamical degree is grester than 1. Consequentely, we will focus on this class of polynomial automorphisms. Furthermore, since our results are independent of the conjugacy class, we may restrict our attention to \emph{regular plane polynomial automorphisms}, that is, those that are affine conjugate to generalized H\'enon maps.

Recall that any polynomial automorphism has a constant Jacobian. Therefore, the Jacobian function $t\in \Lambda(\C) \to \Jac(f_t)\in \C$ is well-defined and regular. This function is either constant or surjective onto $\C$ up to finitely many values. Our first result addresses the case where the Jacobian function is constant.

\begin{thm}\label{MainMarkedPoint}
    Let $f_t$ be an algebraic family of polynomial automorphisms of H\'enon type of degree $d\geq 2$ parameterized by a smooth quasi-projective curve $\Lambda$, defined over a number field $\K.$ Let $\sigma\colon\Lambda \to \A^2$ be a non-globally periodic marked point defined over $\K.$ Fix an archimedean place $v$ of $\K$. Suppose the Jacobian $\Jac(f_t)$ is constant and $|\Jac(f_t)|_v\neq 1.$ Then there exist only finitely many $t\in \Lambda(\C)$ such that $\sigma(t)$ is periodic for $f_t$.
\end{thm}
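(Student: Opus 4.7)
My plan is to follow the arithmetic equidistribution approach of Baker--DeMarco and Yuan, adapted to families of H\'enon automorphisms in the spirit of Dujardin--Favre. The starting point is the Kawaguchi--Silverman canonical height $\hat{h}_{f_t}=\hat{h}^+_{f_t}+\hat{h}^-_{f_t}$ of the regular plane automorphism $f_t$, which satisfies $\hat{h}_{f_t}(p)=0$ if and only if $p$ is periodic for $f_t$. I would attach to the marked point a height by setting $\hat{h}_\sigma(t):=\hat{h}_{f_t}(\sigma(t))$ for $t\in\Lambda(\bar{\Q})$, so that any parameter at which $\sigma(t)$ is periodic for $f_t$ lies in the zero locus of $\hat{h}_\sigma$.

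The next step is to realize $\hat{h}_\sigma$ as the height attached to a continuous, semipositive adelic metrized $\R$-line bundle $\bar{L}$ on a smooth projective compactification $\bar{\Lambda}$ of $\Lambda$. At each place $w$ of $\K$ the local metric will have potential $g_w(t)=g^+_{f_t,w}(\sigma(t))+g^-_{f_t,w}(\sigma(t))$, where $g^\pm_{f_t,w}$ are the dynamical Green functions of $f_t^{\pm 1}$; semipositivity will follow from plurisubharmonicity (resp.\ subharmonicity on Berkovich spaces) of these Green functions in $(t,p)$, while continuity across the boundary $\bar{\Lambda}\setminus\Lambda$ needs uniform estimates on escape rates as the H\'enon family degenerates, in the spirit of the constructions of Ingram and of Favre--Gauthier.

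Assume for contradiction that infinitely many $t\in\Lambda(\C)$ have $\sigma(t)$ periodic for $f_t$. Since $f$ and $\sigma$ are defined over $\K$, such $t$ lie in $\Lambda(\bar{\Q})$; being infinite on the curve $\Lambda$ they are Zariski dense, and thus form a generic sequence of $\hat{h}_\sigma$-height zero. Zhang's inequality for the essential minimum then forces $\hat{c}_1(\bar{L})^2=0$, and the arithmetic Hodge index theorem of Yuan--Zhang, combined with semipositivity, forces each local potential $g_w$ to be constant on $\Lambda^{\an}_w$; this constant must be $0$ since $g_w\geq 0$ and $\hat{h}_\sigma$ vanishes somewhere. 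Hence $\sigma(t)\in K^+_{f_t,w}\cap K^-_{f_t,w}$ for every place $w$ of $\K$ and every $t\in\Lambda^{\an}_w$.

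It remains to upgrade this pointwise bounded-orbit condition to global periodicity of $\sigma$, and this is where the hypothesis $|\Jac(f_t)|_v\neq 1$ enters decisively. After possibly replacing $f$ by $f^{-1}$, we may assume $|\Jac(f_t)|_v<1$, so $f_t$ is dissipative at $v$. The laminar/hyperbolic structure of $K^+_{f_t,v}\cap K^-_{f_t,v}$ for dissipative H\'enon maps (Bedford--Smillie), together with the algebraicity of $\sigma$ and the analogous inclusions at every other place, should then force integers $N>M$ with $f_t^N\circ\sigma\equiv f_t^M\circ\sigma$ on $\Lambda$, contradicting the non-global-periodicity of $\sigma$. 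I expect this final rigidity step to be the main technical obstacle; in the conservative case $|\Jac(f_t)|_v=1$ the invariant sets are far less rigid and the analogous rigidity simply fails, which is why the archimedean dissipativity hypothesis is indispensable.
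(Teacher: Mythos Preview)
Your overall architecture (canonical height + arithmetic equidistribution, then a rigidity step exploiting dissipativity) is the right one, but the two main steps both contain genuine gaps.

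\medskip
\textbf{The Hodge index step is incorrect.} From a Zariski-dense sequence of height-zero parameters and Zhang's inequality you correctly get $\overline{L}^2=0$. But the arithmetic Hodge index theorem of Yuan--Zhang only forces the local potentials to be constant when the underlying geometric line bundle $L$ is numerically trivial. Here $\deg L$ equals the geometric canonical height of $\sigma$ over the function field $\K(\Lambda)$, which is strictly positive once $(f,\sigma)$ is non-isotrivial and $\sigma$ is not globally periodic; so Hodge index does not apply, and in fact $g_v=G^+_{f_t,v}(\sigma(t))+G^-_{f_t,v}(\sigma(t))$ is typically \emph{not} harmonic on $\Lambda$. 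What equidistribution does give, and what the paper actually uses, is weaker: applying Yuan's theorem separately to the nef adelic line bundles built from $G^+$ and from $G^-$, the Galois orbits of periodic parameters equidistribute simultaneously towards $\mu^+_{f,\sigma}$ and towards $\mu^-_{f,\sigma}$, hence these two measures are proportional. One cannot conclude $\sigma(t)\in K^+_{f_t}\cap K^-_{f_t}$ for all $t$.

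\medskip
\textbf{The rigidity step is the heart of the matter and is not a black box.} Even granting the (incorrect) stronger conclusion $\sigma(t)\in K^+_{f_t}\cap K^-_{f_t}$ for all $t$, there is no general ``laminar/hyperbolic'' result of Bedford--Smillie that forces an algebraic section of the filled Julia set to be globally periodic; $K^+\cap K^-$ contains many non-periodic points, and its fine structure does not by itself interact with algebraicity of $\sigma$. In the paper, starting only from $\mu^+_{f,\sigma}\propto\mu^-_{f,\sigma}$, the contradiction is obtained by a local case analysis \emph{at} one of the assumed periodic parameters $t_0$: one classifies $\sigma(t_0)$ by multiplier type (saddle, semi-repelling/attracting, repelling/attracting, neutral) and excludes each type separately. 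The hypothesis $|\Jac(f_t)|_v\neq 1$ immediately rules out neutral. The repelling/attracting case is easy (support considerations). The remaining saddle and semi-repelling cases require new renormalization lemmas that transfer the proportionality $G^+_t\circ\sigma=\gamma\,G^-_t\circ\sigma+H$ on the parameter curve to a relation between $G^+_{f_{t_0}}$ on the local unstable manifold and $G^-_{f_{t_0}}$ on the local stable manifold in the fibre; one then concludes via the H\"older-exponent argument of Dujardin--Favre that $\Jac(f_{t_0})$ would have to be a root of unity, contradicting $|\Jac|_v\neq 1$. This dynamical renormalization is the main technical content, and nothing in your sketch substitutes for it.
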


In Theorem \ref{MainMarkedPoint}, if $f_t=g$ for some fixed $g$, then the statement reduces to a result of Dujardin and Favre (see \cite[Theorem A']{DFManinMumford2017}). In this case, they showed more generally that the set of algebraic points with small canonical heights is finite.
On the other hand, if $|\Jac(f_t)|_v= 1$, a marked point may have infinitely many periodic points. This phenomenon occurs when the polynomial automorphisms are reversible, a situation first noted in \cite[Proposition 7.1]{DFManinMumford2017}. See also \cite[Theorem D]{hsia2018heights}.

Let $t\in \Lambda$ be a parameter such that $\sigma(t)$ is  $f_t$-periodic of period $k$. Let $u(t)$ and $s(t)$ be the two eigenvalues of the differential of $f_t^k$ at $\sigma(t)$. Then we say that $\sigma(t)$ (or just $t$) is 
\begin{itemize}
    \item \emph{saddle} if $|u(t)| > 1 > |s(t)|$;
    \item \emph{semi-repelling} if $|u(t)|>1$ and $|s(t)|=1$;
    \item \emph{repelling} if $|u(t)|>1$ and $|s(t)|>1$;
    \item \emph{neutral} if $|u(t)|=|s(t)|>1$.
\end{itemize}
\begin{thm}\label{MainMarkedPoint2}
    Let $f_t$ be an algebraic family of polynomial automorphisms of H\'enon type of degree $d\geq 2$ parameterized by a smooth quasi-projective curve $\Lambda$, defined over a number field $\K.$ Let $\sigma\colon\Lambda \to \A^2$ be a non globally periodic marked point defined over $\K.$ Suppose $\Jac(f_t)$ is not persistently equal to a root of unity. If there exist infinitely many $t\in \Lambda(\C)$ such that $\sigma(t)$ is periodic, then all of them are neutral.
\end{thm}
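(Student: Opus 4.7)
I would argue the contrapositive: assuming that some periodic parameter $t_0\in\Lambda(\C)$ is not neutral, I aim to deduce that there are only finitely many periodic parameters. For each fixed $k$, the condition $f_t^k(\sigma(t))=\sigma(t)$ defines a proper Zariski-closed subvariety of $\Lambda$ over $\K$ --- proper because $\sigma$ is not globally periodic and $\Lambda$ is one-dimensional --- hence a finite set. All periodic parameters thus lie in $\Lambda(\bar\K)$, and along any infinite sequence $(t_n)$ of them the periods $k_n$ tend to infinity.

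The principal tool is the adelic canonical height
\[
\hat h_\sigma(t)=\sum_w n_w\bigl(G^+_{t,w}(\sigma(t))+G^-_{t,w}(\sigma(t))\bigr),
\]
built from the dynamical Green functions $G^\pm_{t,w}$ at each place $w$ of $\K$. It is non-negative and vanishes exactly on those $t\in\Lambda(\bar\K)$ with $\sigma(t)$ preperiodic at every place; since $\sigma$ is not globally periodic, it should define a non-trivial semi-positive adelic metrization on a line bundle over a projective compactification of $\Lambda$, and arithmetic equidistribution of Yuan--Thuillier--Chambert-Loir applies: along $(t_n)$, the Galois orbits equidistribute, at each place $w$, to the canonical measure $\mu_{\sigma,w}$ on $\Lambda^{\an}_w$.

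The bridge to the multipliers is the elementary identity, valid at every place $w$ (since $u(t)s(t)=\pm\Jac(f_t)^k$),
\[
\max(\log|u(t)|_w,\log|s(t)|_w)-\tfrac{k}{2}\log|\Jac(f_t)|_w \;=\; \tfrac{1}{2}\bigl|\log|u(t)/s(t)|_w\bigr|\;\ge\;0,
\]
with equality iff $|u(t)|_w=|s(t)|_w$. Summing over $w$ with local multiplicities and applying the product formula to $u(t)$, $s(t)$, $\Jac(f_t)$, the left-hand side becomes $[L:\Q]\,h(u(t)/s(t))$, so the Weil height $h(u(t)/s(t))$ records exactly the global failure of neutrality of $t$. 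Non-neutrality of $t_0$ at the archimedean place $v$ thus forces $h(u(t_0)/s(t_0))>0$; the assumption that $\Jac(f_t)$ is not persistently a root of unity is what ensures this multiplier-height is not trivialized by an identity satisfied by the whole family.

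To obtain the contradiction, I would globalize $t\mapsto\log|u(t)/s(t)|_v$ to a psh potential on $\Lambda^{\an}_v$ by invoking structural stability to analytically continue the non-neutral periodic orbit through $\sigma(t_0)$, and thus build a second semi-positive adelic height on $\Lambda$ that also vanishes on every periodic parameter. Comparing the two equidistribution limits produced by Yuan's theorem --- the one from $\hat h_\sigma$ and the one from the multiplier-height --- they cannot both be consistent with the same infinite sequence $(t_n)$ unless the periodic locus is already finite. The main obstacle, and where the hypothesis on $\Jac(f_t)$ enters most decisively, is the global construction of this multiplier potential and the verification of its adelic semi-positivity: singularities arise at parameters where the two eigenvalues collide or where $\Jac(f_t)$ itself becomes a root of unity, and excluding persistent root-of-unity behavior is precisely what allows these singularities to be absorbed so that a genuinely new equidistribution statement can be extracted.
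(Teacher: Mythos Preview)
Your proposal has a genuine structural gap: the object you call the ``multiplier height'' cannot be built as you describe. The ratio $u(t)/s(t)$ is only defined at the (countable, discrete) periodic parameters, and the period $k$ varies with $t$; there is no global meromorphic function on $\Lambda$ whose absolute value is $|u(t)/s(t)|_v$. Invoking structural stability near $t_0$ only gives you the continuation of the \emph{period-$k_0$ orbit}, which for $t\neq t_0$ has nothing to do with $\sigma(t)$; so the potential $t\mapsto\log|u(t)/s(t)|_v$ you propose is a local function attached to a different dynamical object, not an adelic height on $\Lambda$ adapted to the marked point. Consequently the assertion that this second height ``also vanishes on every periodic parameter'' is unjustified (and in fact false as stated: at a non-neutral periodic $t$ the multiplier ratio has positive height, not zero), and the final ``comparison of two equidistribution limits'' is left as a slogan rather than an argument.

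The paper's route is entirely different and much more local-analytic. One does apply equidistribution to the periodic parameters, but the output is that the two parametric Green measures $\mu_{f,\sigma}^+$ and $\mu_{f,\sigma}^-$ are proportional. That proportionality is then exploited fiberwise via renormalization lemmas: at a candidate non-neutral periodic parameter $t_0$ one rescales the parameter by powers of an eigenvalue and transfers the parametric Green function to the Green function restricted to the local unstable manifold of $\sigma(t_0)$. This immediately excludes (semi-)repelling/attracting parameters, and for saddles one combines a H\"older-exponent argument \`a la Dujardin--Favre with a rigidity statement of Cantat--Dujardin (if the Julia slice in one unstable manifold lies on a real line, it does in all of them) to force all unstable multipliers to be real, hence $\Jac(f_t)\in\{\pm1\}$ along an infinite set --- contradicting the hypothesis that $\Jac(f_t)$ is not persistently a root of unity. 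None of this machinery appears in your outline, and the arithmetic identity you isolate, while correct, does not by itself produce a second adelic line bundle to equidistribute against.
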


As mentioned above, a marked point $\sigma$
may have infinitely many parameters $t$ such that $\sigma(t)$ is periodic when $|\Jac(f_t)|=1$. Nevertheless, we are able to show that the set of such parameters has bounded height.
\begin{thm}[cf.~Theorem~\ref{boundedheight}]\label{MainBoundedHeight}
    Let $f_t$ be an algebraic family of polynomial automorphisms of H\'enon type of degree $d\geq 2$ parameterized by a smooth quasi-projective curve $\Lambda$, defined over a number field $\K.$ Let $\sigma$ be a marked point. Then
    \begin{align*}
        \{t\in \Lambda(\Qbar) \mid \sigma(t)\ \text{is periodic for}\ f_t \}
    \end{align*}
    is a set of bounded height.
\end{thm}
The special case of Theorem~\ref{MainBoundedHeight} where $f_t$ is a H\'enon map of the form $(y,x+f_t(y))$ (in particular it has constant jacobian -1) and non-isotrivial, was proved by Ingram \cite[Theorem 1.3]{Ingram2011CanonicalHF}.

Hsia and Kawaguchi \cite[Theorem G]{hsia2018heights} investigated, on the other hand, ``unlikely intersection problems'' for families of H\'enon maps with constant Jacobian $\pm 1$, parameterized by the affine line $\A^1$. More precisely, given two marked points $\sigma_1, \sigma_2 \colon \A^1 \to \A^2$, each containing infinitely many periodic points (a situation that does not occur when the Jacobian is not on the unit circle, as shown by Theorem~\ref{MainMarkedPoint}), they studied the case when the set of $t$ such that $\sigma_1(t)$ and $\sigma_2(t)$ are both periodic is infinite.

\subsubsection{Transfer from the parameter space to the phase space and equidistribution}
We will prove two renormalization lemmas in Sect.~\ref{sectionrenormalization} --- one for saddle periodic points and another for semi-repelling periodic points. These lemmas are analogues of Tan lei's similarity theorem \cite{Tanlei} for repelling preperiodic points. Roughly speaking, they allows us to transfer information from the parameter space $\Lambda$ to the dynamical space $A^2$, by rescaling with an appropriate factor and iterating. This technique has become widely used in the study of holomorphic dynamical systems (see e.g., \cite{BuffEpstein,DFManinMumford2017,FavreGauthier2022,daoforcurves}). Unlike the case for repelling preperiodic points of rational maps, a map typically cannot be linearized at a saddle point, and the situation is even more intricate for semi-repelling periodic points. Our renormalization lemmas work for these latter cases within families. 
It seems natural to expect that Theorem~\ref{MainMarkedPoint2} should reach the same conclusion as Theorem~\ref{MainMarkedPoint}. However, our approach does not apply to neutral periodic points, as there is less known about their behavior in general.

Let us briefly outline the strategy for proving Theorems~\ref{MainMarkedPoint} and~\ref{MainMarkedPoint2}. Fix an archimedean place $v$ of $\K$ and let $G_f^\pm\colon\Lambda\times \A^2(\C)\to \R_{\geq 0}$ be the \emph{fibered forward/backward Green functions} (associated with $f_t$), defined by
\begin{align}\label{Greenfunction+-}
    G_f^\pm(t,z)=G_{f_t}^\pm(z)=G_t^\pm(z)\coloneqq\lim_{n\to +\infty}\frac{1}{d^n}\log^+\lVert f_t^{\pm n}(z)\rVert,
\end{align}
which are non-negative, continuous, and plurisubharmonic (see \cite{BS1,GVhenon,Hubbard86,SibonyPanorama}). Hence, to a marked point $\sigma:\Lambda\to \Lambda\times \A^2$, we can associate the \emph{forward/backward Green measure} (of $f_t$) on $\Lambda(\C)$, given by
\begin{align}\label{Greenmeasuremarkedpoint}
\mu_{f,\sigma}^\pm \coloneqq \sigma^*\ddc G_f^\pm.    
\end{align}
To prove Theorem~\ref{MainMarkedPoint}, we assume, for the sake of contradiction, that there exist infinitely many $t$ such that $\sigma(t)$ is periodic. This will imply, by the equidistribution Theorem~\ref{equidistributionpoints} (see also Sect.~\ref{introequi} below), that the two Green measures $\sigma_{f,\sigma}^+$ and $\sigma_{f,\sigma}^-$ are proportional. Then, applying our renormalization Lemmas~\ref{renormalization semi} and~\ref{renormalization saddle}, we will derive a contradiction. For Theorem~\ref{MainMarkedPoint2}, we additionally rely on a recent result\footnote{I would like to thank Dujardin for informing me of this result, see \cite[Proposition 2.2]{CantatDujardin}} by Cantat and Dujardin concerning the rigidity of the Julia set of regular plane polynomial automorphisms, see Lemma~\ref{RigidityDujardinCantat}.

\subsection{Families of curves and uniform dynamical Bogomolov}

A marked point $\sigma:\Lambda \to\Lambda\times \A^2$ can be viewed as a horizontal subvariety (its image) in the total space $\Lambda\times \A^2$ of relative dimension zero. We now turn to the study of horizontal subvarieties of relative dimension one. We say that a subvariety $\Cscr\subset\Lambda\times\A^2$ defined over a number field $\K$ is \emph{a family of curves parameterized by} $\Lambda$ if the projection $\Cscr \to \Lambda$ is smooth and each geometric fiber $\Cscr_t,$ $t\in \Lambda(\Qbar)$, is a integral curve. 

Dujardin and Favre \cite[Theorem A'']{DFManinMumford2017} proved that, given a regular plane polynomial automorphism defined over a number field $\K$ whose Jacobian does not lie on the unit cycle at some archimedean place, the number of points of small canonical height on an algebraic curve depends only on the degree of the curve. We extend this result by examining how this dependence varies with the map itself. 

\subsubsection{From arithmetic properties to pluripotential theory}
We first translate this arithmetic uniformity property into the non-vanishing property of some Green measure associated with the given family of curves. Specifically, Fix an archimedean place $v$ of $\K$ so that we have $G_f^\pm$ \eqref{Greenfunction+-}, then define
\begin{align}\label{Greenfunctionmax}
G_f(t,z)=G_{f_t}(z)=G_t(z) \coloneqq \max\{ G^+_{f}(t,z),G^+_{f}(t,z)\}.
\end{align}
In the same spirit of the work \cite{gauthier2020geometric} of Gauthier and Vigny on families of endomorphisms of projective spaces, we define the Green measure associated with $\Cscr$ as
\begin{align}\label{green measure}
    \mu_{f,\Cscr}\coloneqq (\ddc G_f)^2\wedge [\Cscr].
\end{align}
Following Yuan and Zhang \cite{YZadelic}, we say that $\Cscr$ is \emph{non-degenerate} if $\mu_{f,\Cscr}$ is non-vanishing. We can show the following.
\begin{thm}\label{maincurve}
    Let $f_t$ be an algebraic family of regular plane polynomial automorphisms of degree $d\geq 2$ parameterized by a smooth quasi-projective curve $\Lambda$ defined over a number field $\K.$ Fix an archimedean place $v$ of $\K,$ so that we have an embedding $\K \hookrightarrow \C.$ Let $\Cscr$ be a non-degenerate family of curves. 
    
    Then there exist a positive constant $\varepsilon$ and a positive integer $N$ such that for all but finitely many $t\in \Lambda(\Qbar)$, the set $\{z\in \Cscr_t(\Qbar)\ |\ \hat{h}_{f_t}(z) \leq \varepsilon \}$ has at most $N$ points. In particular, for all but finitely many $t\in \Lambda(\C)$, there are at most $N$ periodic points on $\Cscr_t(\C).$
\end{thm}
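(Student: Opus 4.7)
The plan is to interpret Theorem~\ref{maincurve} as a uniform dynamical Bogomolov statement for an adelically metrized line bundle on the quasi-projective surface $\Cscr$, in the spirit of Yuan--Zhang and following Gauthier--Vigny \cite{gauthier2020geometric}. The non-degeneracy condition $\mu_{f,\Cscr}\neq 0$ will play exactly the role of the ``big'' hypothesis in that theory, forcing a positive essential minimum for the associated height on $\Cscr$.

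First, I would build an adelically metrized line bundle $\hat L$ on (a suitable compactification of) $\Cscr$ whose induced height on a geometric fiber $\Cscr_t(\Qbar)$ coincides with $\hat h_{f_t}$ up to a bounded term. At each place $w$ of $\K$, the metric is obtained from the Call--Silverman telescoping construction $\hat h_{f_t}^\pm=\lim d^{-n}\,h\circ f_t^{\pm n}$ applied uniformly in $t$, so that the non-archimedean pieces assemble into integrable metrics in the sense of Yuan--Zhang. At the archimedean place $v$, the curvature of $\hat L$ is $\ddc G_f$, and consequently its adelic Monge--Amp\`ere measure restricted to $\Cscr$ coincides (up to a positive factor) with $\mu_{f,\Cscr}$.

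Once $\hat L$ is constructed, the non-vanishing of $\mu_{f,\Cscr}$ combined with the Yuan--Zhang essential-minimum inequality yields a constant $\varepsilon>0$ and a proper Zariski-closed subset $Z\subsetneq \Cscr$ such that every $z\in\Cscr(\Qbar)$ with $\hat h_{\hat L}(z)\leq \varepsilon$ lies in $Z(\Qbar)$. Since $\Cscr$ has dimension two, $Z$ is a finite union of irreducible curves, each either \emph{vertical} (contained in a single fiber $\Cscr_{t_0}$) or \emph{horizontal} (dominating $\Lambda$). Only finitely many parameters $t_0$ arise from vertical components, giving the finite exceptional set in the statement; for every other $t$, the set $\{z\in\Cscr_t(\Qbar):\hat h_{f_t}(z)\leq\varepsilon\}$ is contained in the intersection of $\Cscr_t$ with the horizontal components of $Z$, whose cardinality is bounded by the sum $N$ of the degrees of those components over $\Lambda$. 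The final assertion is then immediate: periodic points of $f_t$ have $\hat h_{f_t}=0\leq \varepsilon$, so the uniform bound $N$ applies.

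The main obstacle lies in the first step, that is, producing an adelic line bundle $\hat L$ on the quasi-projective $\Cscr$ with enough regularity (continuous semi-positive metrics at all places, adelic integrability, and an extension to a suitable projective compactification) to invoke the Yuan--Zhang framework. This requires carefully controlling the dynamical Green functions both near $\partial\Lambda$ and near infinity in $\A^2$, where the Green function of a H\'enon map grows logarithmically along one boundary direction and decays along the other; a convenient compactification and comparison with a reference height are needed, as in \cite{DFManinMumford2017,gauthier2020geometric}. Once this foundational construction is secured, the passage from ``non-vanishing Monge--Amp\`ere measure'' to ``positive essential minimum'' is a direct invocation of the Yuan--Zhang machinery, and the geometric decomposition of $Z$ concludes the argument.
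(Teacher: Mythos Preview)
There is a genuine gap at the key step. You assert that $\mu_{f,\Cscr}\neq 0$ together with ``the Yuan--Zhang essential-minimum inequality'' forces the essential minimum of $\hat h$ on $\Cscr$ to be positive, so that all small-height points lie in a proper closed $Z\subsetneq\Cscr$. But non-degeneracy is a \emph{geometric} condition: it says the archimedean Monge--Amp\`ere mass $(\ddc G_f)^2\wedge[\Cscr]$ is nonzero, equivalently $L^2>0$ on $\Cscr$. Zhang's inequality, on the other hand, gives $e_{\mathrm{ess}}(\overline L)\ge \overline L^{\,3}/(3\,L^2)$, so what you need is the \emph{arithmetic} positivity $\overline L^{\,3}>0$, and nothing in your outline supplies it. Worse, the conclusion of Theorem~\ref{maincurve} is strictly weaker than $e_{\mathrm{ess}}(\overline L|_\Cscr)>0$: it is perfectly compatible with infinitely many fibers each carrying a single periodic point of unbounded period, and such points could be Zariski dense in $\Cscr$. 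So your second step is aiming at a statement stronger than the theorem, one that may simply be false.

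The paper proceeds by a different mechanism. It passes to the fibered square $\Cscr^{[2]}=\Cscr\times_\Lambda\Cscr$ and argues by contradiction via equidistribution (Theorem~\ref{equidistribution curves}). The positivity that replaces your ``$\overline L^{3}>0$'' is Proposition~\ref{positivityI_f}: after replacing $f$ by an iterate,
\[
I_{f,\Cscr}=\int_{\Cscr^{[2]}}G_f^{[2]}\,\bigl(\ddc G_f^{[2]}\bigr)^{3}>0.
\]
This is not a formal consequence of $\mu_{f,\Cscr}\neq 0$; its proof exploits the behaviour of the ratio $G_{f_t}^+/G_{f_t}^-$ on the components of $\Cscr_t\setminus K_t$ (Claim~\ref{rangeinfty}), which is specific to the H\'enon setting. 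The fibered square is essential because the negation of Theorem~\ref{maincurve}---unboundedly many small-height points on infinitely many fibers---is exactly what manufactures a \emph{generic} small sequence in $\Cscr^{[2]}$; a generic small sequence in $\Cscr$ itself would only need one small point per fiber, which is too weak to contradict anything. Once equidistribution on $\Cscr^{[2]}$ is in hand, one tests against $\chi\,G_f^{[2]}$ and uses the pointwise bound $\hat h^{[2]}\geq\tfrac12 G_f^{[2]}$ together with $I_{f,\Cscr}>0$ to obtain the contradiction, without ever computing an essential minimum on $\Cscr$.
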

Note that $\varepsilon$ and $N$ depend on the families $f$ and $\Cscr.$ We refer to Sect. \ref{SectHeightInequalities} for the definition of the canonical height function $\hat{h}_{f_t}$ of a regular plan polynomial automorphism. To prove Theorem~\ref{maincurve}, we again proceed by contradiction. We suppose the conclusion is not satisfied and then apply the equidistribution Theorem~\ref{equidistribution curves} to contradict Proposition~\ref{positivityI_f}.

The set of periodic points contained in a non-degenerate surface $\Cscr$ may still be infinite, but it is a set of bounded height as well. This is the following theorem.
\begin{thm}\label{MainBoundedHeightSurface}
    Under the non-degeneracy assumption of Theorem~\ref{maincurve}, 
    \begin{align*}
        \{t\in\Lambda(\Qbar) \mid \exists z\in \Cscr_t(\Qbar), z\ \text{is periodic for}\ f_t \}\ \text{and}\ \{z\in \Cscr(\Qbar) \mid z\ \text{is periodic for}\ f \} 
    \end{align*}
    are sets of bounded height.
\end{thm}

\subsubsection{Dissipative families of quadratic H\'enon maps}
Let $\delta \in \K$ be such that $|\delta| <1.$ Consider the following family of quadratic H\'enon maps
\begin{align}\label{quadraticintro}
    f \colon (t,x,y)\in \K^3 \mapsto (t,y,y^2+t-\delta x)\in\K^3,
\end{align}
Denote by 
\begin{align}\label{ydeltat}
    y^\pm_{\delta,t}\coloneqq\frac{(1+\delta) \pm \sqrt{(1+\delta)^2 - 4t}}{2}.
\end{align}
Note that the two points $(y^+_{\delta,t},y^+_{\delta,t})$ and $(y^-_{\delta,t},y^-_{\delta,t})$ are the fixed points of $f_t$. Let $\Sigma_t$ be the set of the following 8 points
\begin{align}\label{Sigma_t}
  \Sigma_t\coloneqq \left\{ \begin{array}{l}
    (y^+_{\delta,t},y^+_{\delta,t}),(y^+_{\delta,t}-\delta,y^+_{\delta,t}),(y^+_{\delta,t},y^+_{\delta,t}-1)
    ,(y^+_{\delta,t}-\delta,y^+_{\delta,t}-1) \\
    (y^-_{\delta,t},y^-_{\delta,t}),(y^-_{\delta,t}-\delta,y^-_{\delta,t}),(y^-_{\delta,t},y^-_{\delta,t}-1)
    ,(y^-_{\delta,t}-\delta,y^-_{\delta,t}-1)
  \end{array}\right\}
\end{align}
Remark that all these points lie on the lines
\begin{align}\label{lines}
    y=x,\ \ y=x+1,\ \ y=x+\delta\ \ \mathrm{and} \ \ y=x+\delta-1,
\end{align}

\begin{thm}\label{maincurve2}
    
    Let $f_t$ be the family~\eqref{quadraticintro}. Let $\Cscr\subset \A\times\A^2$ be a family of curves parameterized by the affine line $\A^1$. Suppose there exists some positive constant $r>0$ such that for all $|t|$ large enough, the distance between $\Cscr_t$ and the eight points $\Sigma_t$ is larger than $r.$ Then there exist a positive constant $\varepsilon >0$ and a positive integer $N>0$ such that for all but finitely many $t\in \Lambda(\Qbar)$, the set $\{z\in \Cscr_t(\Qbar)\ |\ \hat{h}_{f_t}(z) \leq \varepsilon \}$ has at most $N$ points.

    In particular, for all but finitely many $t\in \Lambda(\C)$, there are at most $N$ periodic points on $\Cscr_t(\C).$
\end{thm}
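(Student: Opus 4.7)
The strategy is to deduce the result from Theorem \ref{maincurve}: it suffices to show that, under the distance hypothesis on $\Cscr_t$, the family of curves $\Cscr$ is non-degenerate, i.e.\ $\mu_{f,\Cscr}=(\ddc G_f)^2\wedge[\Cscr]$ does not vanish identically on $\Lambda=\A^1$. Once non-degeneracy is established, Theorem \ref{maincurve} immediately produces the desired constants $\varepsilon>0$ and integer $N>0$ controlling small-height points and, a fortiori, periodic points on $\Cscr_t$.

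To establish non-degeneracy I would analyse the asymptotic behaviour of the fibered Green functions $G_t^\pm$ and of the Julia set $J_t=\supp (\ddc G_t)^2$ as $|t|\to\infty$. After rescaling the dynamical coordinates by $|t|^{1/2}$, the family \eqref{quadraticintro} becomes, up to lower-order terms, a normalized dissipative H\'enon map with Jacobian $-\delta$; using the explicit location of the fixed points $(y^\pm_{\delta,t},y^\pm_{\delta,t})$ and the eigendirections of $Df_t$ at these points, one can derive sharp pluripotential-theoretic bounds on $G_t^\pm$ and localize $\supp(\ddc G_t)^2$ near the two fixed points and their natural translates along the four lines \eqref{lines}. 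The key geometric claim to prove is that, in this asymptotic regime, the slice mass $\int (\ddc G_t)^2\wedge[\Cscr_t]$ can tend to zero as $|t|\to\infty$ only if $\Cscr_t$ asymptotically collapses onto one of the eight points of $\Sigma_t$.

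Granting this claim, the hypothesis $d(\Cscr_t,\Sigma_t)\geq r>0$ for all $|t|$ large gives a uniform positive lower bound on $\int (\ddc G_t)^2\wedge[\Cscr_t]$ along a sequence $|t|\to\infty$, which rules out $\mu_{f,\Cscr}\equiv 0$; by Theorem \ref{maincurve} the conclusion follows. The main obstacle will be the precise identification of $\Sigma_t$ as the \emph{complete} asymptotic obstruction to non-degeneracy, rather than just a necessary subset: a priori, many loci in $\A^2$ could absorb all Green mass asymptotically, and showing that exactly these eight explicit points suffice requires a careful analysis of the rescaled dynamics near infinity in the parameter, together with a decisive use of the dissipative hypothesis $|\delta|<1$ to pin down the geometry of the stable and unstable directions at the two saddle fixed points of $f_t$ and to rule out further degeneracies along the lines \eqref{lines}.
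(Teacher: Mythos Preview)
Your overall plan---reduce to Theorem~\ref{maincurve} by showing $\Cscr$ is non-degenerate---matches the paper's. But the mechanism you propose for non-degeneracy is inverted, and this is a genuine gap.

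You claim that the slice mass $\int(\ddc G_t)^2\wedge[\Cscr_t]$ stays bounded below as $|t|\to\infty$ precisely when $\Cscr_t$ stays away from $\Sigma_t$. This is backwards. On each fibre, $(\ddc G_t)^2=\ddc G_t^+\wedge\ddc G_t^-$ is the equilibrium measure of $f_t$, supported in the filled Julia set $K_t$. The paper's Proposition~\ref{DegenerationFilledJulia} shows that $K_t$ is contained in bidisks of radius $r_t\to 0$ about the eight points of $\Sigma_t$. So the distance hypothesis forces $\Cscr_t\cap K_t=\emptyset$ for $|t|$ large, and hence your slice mass is \emph{zero} there, not bounded below. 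No asymptotic analysis of the Julia set along the lines you describe can produce non-degeneracy directly.

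The paper extracts non-degeneracy by a different route. The emptiness of $\Cscr_t\cap K_t$ for large $|t|$ says that $G_f$ is bounded away from $0$ on the boundary of a large polydisk cut by $\Cscr$ (condition~(1) of Proposition~\ref{comparaisonPrinciple}). One then argues by contradiction: if the conclusion of the theorem fails, the resulting small-height points give a sequence $z_n\in\Cscr$ with $G_f(z_n)\to 0$ inside that polydisk (condition~(2)). The Bedford--Taylor comparison principle (Proposition~\ref{comparaisonPrinciple}) then forces $(\ddc G_f)^2\wedge[\Cscr]$ to carry mass in the bounded region, yielding non-degeneracy, and Theorem~\ref{maincurve} closes the contradiction. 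Note in particular that non-degeneracy is \emph{not} established unconditionally from the distance hypothesis alone; it uses the negation of the conclusion. Your proposal does not supply the interior small-$G_f$ points, nor any substitute for the comparison principle, so as written it cannot conclude.
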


The condition on the distance in Theorem~\ref{maincurve2} is rather mild. For example, if
$\Cscr$ is the constant family of a line $L\subset \K^2$, i.e. for any $t,$ $\Cscr_t=L$, which is not one of the lines in~\eqref{lines}, then it verifies the assumption.

\subsection{Equidistribution Theorems}\label{introequi}
\subsubsection{Other related works and Yuan-Zhang's equidistribution theorem}
In the one dimensional case, consider a pair of two families of rational maps $(f_t,g_t)$ parameterized by a quasi-projective curve defined over a number field. Then Mavraki and Schmidt \cite{MavrakiSchmidt} showed that there exists a positive integer $B$ such that, for all but finitely many parameters $t$, the number of common preperiodic points of $f_t$ is less than $B.$ This result was further generalized by DeMarco and Mavraki \cite{DemarcoMavraki24} to families parameterized by bases of higher dimensions (see also \cite{DKY1,DKY2}). Now consider the diagonal $\Delta\subset \P^1\times \P^1$ as the constant family of curves. In this case, all the common preperiodic points are precisely the preperiodic points of the map $(f_t,g_t)$ within the family of curves $\Delta.$

Let $\mathrm{Poly}^2_d$ denote the space of regular polynomial endomorphisms of degree $d\geq 2$ on the complex affine
plane $\C^2$ (i.e., which can be extended to an endomorphism of the projective plane $\P^2$ of degree $d$). For any $f\in \mathrm{Poly}^2_d$, let $C_f$ be the closure in $\P^2$ of critical locus of $f$ in $\C^2$. Then Gauthier, Taflin and Vigny \cite[Theorem D]{gauthier2023sparsity} established that there exist constants $B \geq1$,
$\varepsilon > 0$ and a non-empty Zariski open subset $U \subset \mathrm{Poly}^2_d$ such that for any $f \in U(\Qbar)$, the number of points of canonical height less than $\varepsilon$ in $C_f$ is bounded by $B.$ A crucial step in their work is \cite[Lemma 7.4]{gauthier2023sparsity}, where they assumed the existence of an open subset within the support of the \emph{bifurcation measure} (i.e., Green measure of the family of curves formed by the critical points). They subsequently the existebce of such an open subset in \cite[Lemma 7.6]{gauthier2023sparsity}, which is a highly non-trivial result and follows from one of their main theorems. However, for families of plane regular polynomial automorphisms, we are able to bypass this assumption in Proposition \ref{positivityI_f}.

These works can be viewed as analogues of various results in arithmetic geometry regarding uniform numbers of torsion points in families of abelian varieties. For example, the uniform Mordell-Lang conjecture for curves embedded into their Jacobians established by Dimitrov, Gao, Habegger and K\"uhne (\cite{DGH21,kuhne2021equidistribution}), and generalized to higher-dimensional subvarieties of abelian varieties by Gao, Ge and K\"uhne \cite{GGK21}. 

A key component in the aforementioned works is the equidistribution theorem for non-degenerate subvarieties. This approach traces back to the works of Szpiro, Ullmo and Zhang on Bogomolov's conjecture~\cite{SUZ97,Ullmo98,ZhangEquidistribution98}. More recently, Yuan and Zhang \cite{YZadelic} developed a theory of adelic line bundles on quasi-projective varieties. As an application, they proved an equidistribution theorem on quasi-projective varieties, extending K\"uhne's equidistribution theorem on families of abelian varieties \cite{kuhne2021equidistribution}. Additionally, Yuan \cite{yuan2023big} utilized their theory of adelic line bundles to provide an alternative proof of the uniform Mordell-Lang conjecture for curves and extended it to function fields of any characteristic. 

\subsubsection{Height inequalities ``à la Call-Silverman'' and equidistribution theorems for families of regular plane polynomial automorphisms}
We also rely on the general equidistribution theorem of Yuan and Zhang \cite{YZadelic}, in a reformulated form given by Gauthier \cite{gauthier2023good}.
To utilize this theorem, We first construct some geometric canonical height functions and interpret them as the mass of certains Green measures. We then establish some height inequalities. The first (Lemma \ref{heightinequalitylemma1}) --- which is analogous to Call-Silverman type height inequalities \cite{CallSilverman} --- is for families of plane regular polynomial automorphisms while the second (Lemma \ref{height inequality curve}) holds specifically on non-degenerate subvarieties. We verify that the conditions of the general equidistribution theorem are met in our context, allowing us to prove two specific equidistribution theorems~\ref{equidistributionpoints} (for marked points) and ~\ref{equidistribution curves} (for families of curves) for families of plane regular polynomial automorphisms.
\subsection{Outline}
In Section \ref{sectionrenormalization}, we prove our renormalization lemmas in a local setting. Section \ref{sectionmarkedpoints} focuses on the study of periodic points for marked points with proportional forward and backward Green measures, utilizing the renormalization results from Section \ref{sectionrenormalization}. In Section \ref{sectioncurves}, we construct geometric canonical heights for families of curves, demonstrate an important positivity property for non-degenerate curves, and estimate the degeneration of filled Julia sets for dissipative families of quadratic H\'enon maps. Section \ref{SectHeightInequalities} is dedicated to establishing the height inequalities mentioned earlier. In Section \ref{sectionequidistribution}, after covering preliminaries on adelic line bundles, we prove Theorem~\ref{MainBoundedHeight} and our equidistribution theorems for both marked points and families of curves. Finally, Section~\ref{sectionproof} integrates all the previously obtained results and provides proofs of our main theorems.

\subsection*{Acknowledgement}
I would like to thank my Ph.D. advisor Thomas Gauthier for introducing me to this field and for his constant support. I am also grateful to Romain Dujardin and Thomas Gauthier for advising me on this work. I would like to thank Charles Favre and Gabriel Vigny for answering my questions. I would like to thank Junyi Xie for a useful remark concerning Theorem~\ref{MainMarkedPoint2}.

\section{Renormalization lemmas}\label{sectionrenormalization}
In this section, we prove two renormalization lemmas: one for semi-repelling periodic points (Lemma \ref{renormalization semi}) and another for saddle periodic points (Lemma \ref{renormalization saddle}). The proofs are technical and will subsequently be used to establish two renormalization results for the fibered forward/backward Green functions $G_t^\pm$ in Section~\ref{sectionmarkedpoints} (Lemmas \ref{saddle parameters} and \ref{semiparameterslemma}).

Let us first introduce some notation.
Let $\Dbb(r)$ denote the open disk of radius $r$ centered at the origin of the complex plane. Fix a small positive real number $\varepsilon \ll 1$. Denote by $\pi$ the projection $\pi\colon \Dbb(1)^2 \times \Dbb(\varepsilon) \to \Dbb(\varepsilon)$. Let $a$ and $b$ be two holomorphic functions on $\Dbb(\varepsilon)$ defined by
\begin{align*}
    a(t) \coloneqq t^q + \mathrm{h.o.t.} \ \ \mathrm{and}\ \ b(t) \coloneqq t^p + \mathrm{h.o.t.}
\end{align*}
with $p,q \geq 1$. Let $\sigma(t) = (a(t), b(t))$. If $\gamma(t)=\sum_{i=0}^{+\infty}r_i t^i$ is a power series, then we define 
\begin{align*}
    \mdeg(\gamma) \coloneqq \min \left\{ i\in\N \mid r_i\neq 0  \right\}.
\end{align*}

\subsection{Semi-repelling renormalization}
Let $u,s \colon \Dbb(\varepsilon) \to \C$ be holomorphic functions such that we have $|s(0)|=1 , \sup|s(t)|<\inf|u(t)|$ and $\inf|u(t)|>1.$
Define a holomorphic family of holomorphic maps $f_t\colon\Dbb(1)^2 \to \Dbb(1)^2$ parameterized by $t\in \Dbb(\varepsilon)$ by 
\begin{align}\label{f_tsemi}
    f_t(x,y) \coloneqq (u(t)x + y\Tilde{u}_t(x,y), s(t)y + y\Tilde{s}_t(x,y)),
\end{align}
where $\Tilde{u}_t(x,y)$ and $\Tilde{s}_t(x,y)$ are power series in variables $x,y$ with coefficients holomorphic functions on $t$, such that $\Tilde{u}_t(0,0)=\Tilde{s}_t(0,0)=0.$ 

\begin{lem}[Semi-repelling renormalization]\label{renormalization semi}
    There exists a positive integer $D^+$ such that the following is true. Let $\lambda_u$ be a $D^+$-th root of $u(0)$. For all positive integers $n\geq 1$, we define the rescaling 
    factor $r_n \colon \Dbb(\varepsilon) \to \C$ by $r_n(t)\coloneqq t/ \lambda_u^n$.
For $0 \leq m\leq n$, let 
\begin{align*}
    f^m_{r_n(t)}(\sigma(r_n(t))) = \left(a_m(t) , b_m(t)\right).
\end{align*}
Then up to replacing $f_t$ by an iterate and up to shrinking $\varepsilon$, we have
\begin{align*}
    & \lim_{n\to \infty} a_n(t) = h(t),\ \ \lim_{n\to \infty} b_n(t) = 0\ \mathrm{and}\ \lim_{n\to +\infty}f^{-n}_{r_n(t)}(\sigma(r_n(t)))=(0,0)
\end{align*}
where $h$ is a non constant holomorphic function on the disk $\Dbb(\varepsilon)$ and the convergence is uniform.
\end{lem}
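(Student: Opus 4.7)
The key structural observation is that $\{y=0\}$ is $f_t$-invariant (every nonlinear term in~\eqref{f_tsemi} is divisible by $y$) and on this axis $f_t$ acts as the linear dilation $x\mapsto u(t)x$. I would choose $D^+:=q=\mdeg(a)$, so that $\lambda_u^q=u(0)$. The rescaling $r_n(t)=t/\lambda_u^n$ is tailored so that, to leading order, after $n$ forward iterations, $u(t)^n\cdot a(r_n(t))\sim u(0)^n(t/\lambda_u^n)^q=t^q$. Since $|s(0)|=1<|u(0)|^{p/q}$, I may shrink $\varepsilon$ so that $\sup_{\Dbb(\varepsilon)}|s(t)|<\tau$ for some fixed $\tau<|u(0)|^{p/q}$.

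\textbf{Control of the stable coordinate.} Writing $(a_m,b_m)=f^m_{r_n(t)}(\sigma(r_n(t)))$, the recursion $b_{m+1}=b_m\bigl(s(r_n(t))+\Tilde s_{r_n(t)}(a_m,b_m)\bigr)$, together with $\Tilde s_{r_n(t)}(0,0)=0$, yields by an induction on $m\leq n$ (simultaneously bootstrapping the smallness of $(a_m,b_m)$) the bounds
\[
|b_m(t)|\leq C|t|^p|u(0)|^{-np/q}\tau^m,\qquad |a_m(t)|\leq C|t|^q|u(0)|^{m-n}.
\]
In particular $|b_n(t)|\leq C|t|^p\bigl(\tau/|u(0)|^{p/q}\bigr)^n\to0$ uniformly on $\Dbb(\varepsilon)$.

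\textbf{Convergence of the unstable coordinate and of the backward orbit.} Unfolding $a_{m+1}=u(r_n(t))a_m+b_m\Tilde u_{r_n(t)}(a_m,b_m)$ gives
\[
a_n(t)=u(r_n(t))^n a(r_n(t))+\sum_{j=0}^{n-1}u(r_n(t))^{n-1-j}b_j(t)\Tilde u_{r_n(t)}(a_j(t),b_j(t)).
\]
For the principal term, $u(0)=\lambda_u^q$ and $nr_n(t)\to 0$ together imply $u(r_n(t))^n/u(0)^n\to1$ and $u(0)^n a(r_n(t))\to t^q$, so the first summand converges to $t^q$ uniformly. The error sum is estimated using $|\Tilde u_{r_n(t)}(x,y)|\leq C(|x|+|y|)$ combined with the previous step; this produces a geometric series in the ratio $\tau/|u(0)|^{p/q}<1$ that vanishes uniformly. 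Hence $a_n(t)\to h(t):=t^q$, a non-constant holomorphic function. For the backward direction, $\sigma(r_n(t))\to(0,0)$ uniformly; since $|1/u(r_n(t))|<1$ and $|1/s(r_n(t))|$ is close to $1$, an estimate completely analogous to the one for $b_m$, applied to $f_t^{-1}$, shows $f_{r_n(t)}^{-n}(\sigma(r_n(t)))\to(0,0)$ uniformly.

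\textbf{Main obstacle.} The delicate step is the error estimate in the unstable coordinate: the sum mixes amplifying factors $|u|^{n-1-j}$ with decaying factors $|b_j|$, and its control requires careful bookkeeping of the exponents $p,q$, the spectral gap $|u(0)|^{p/q}-\tau$, and the initial smallness of $(a_0,b_0)$. The freedom to replace $f_t$ by a high iterate $f_t^k$ is what allows us to rule out resonant obstructions: when $s(0)$ is a root of unity of order $N$, passing to $f_t^N$ brings the stable eigenvalue to $1$ and forces the new correction $\Tilde s_t'$ to vanish to higher order in $y$, ensuring the bootstrap in the stable-coordinate estimate goes through cleanly and that the geometric series in the error sum converges unambiguously. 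Once these estimates are in place, the remaining assertions of the lemma follow routinely.
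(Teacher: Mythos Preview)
Your choice $D^+=q$ is not correct in general, and the error estimate you sketch breaks down precisely in the regime the paper handles with the most care. The problem is the pure-$y$ part of the nonlinearity: if $y\Tilde u_t(x,y)$ contains a term $c_k(t)y^k$ with $c_k(0)\neq 0$ and $pk<q$, then the contribution $c_k(r_n(t))\,b_j(t)^k$ to $b_j\Tilde u_{r_n(t)}(a_j,b_j)$ is of order $|\lambda_u|^{-npk}$, and after amplification by $|u(r_n(t))|^{n-1-j}\sim|\lambda_u|^{q(n-1-j)}$ the $j=0$ term alone has size $|\lambda_u|^{n(q-pk)}$, which blows up. Concretely, take $p=1$, $q=3$, $\Tilde u_t(x,y)=y$: then $a_1\approx u(0)a_0+b_0^2$, and the $b_0^2$ term is $\sim t^2/\lambda_u^{2n}$, which after $n-1$ further multiplications by $u(0)=\lambda_u^3$ becomes $\sim \lambda_u^{n}t^2\to\infty$. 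Your geometric-series bound only controls the cross terms $b_j\cdot a_j$, not these pure-$b_j^k$ terms.

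This is why the paper introduces the integer $d_u=\min_{k\geq 2}\mdeg(c_k(t)t^{pk})$ and takes $D^+=\min(q,d_u)$: when $q>d_u$ one must rescale by a $d_u$-th root of $u(0)$, and then the limit is not $t^q$ but $\bigl(\sum_{i\geq 0}c_u(\lambda_s^i)/\lambda_u^{(i+1)d_u}\bigr)t^{d_u}$. The freedom to pass to an iterate is used to guarantee that this coefficient is nonzero (an algebraic condition on the numerator of~\eqref{numerator}), not, as you suggest, to kill resonances in the $\Tilde s$-correction. Your argument is essentially correct in the complementary case $q\leq d_u$ (then $D^+=q$ and $h(t)=t^q$ with no iterate needed), but you have not identified the genuine difficulty.
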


We now describe what the integer $D^+$ is.
If $x$ does not divide $\Tilde{u}_t(x,y)$, we can write 
\begin{align*}
    y\Tilde{u}_t(x,y)= \Tilde{u}^1_t(y) + \Tilde{u}^2_t(x,y)
\end{align*} 
with
\begin{align}\label{Tildeu^1_t}
    \Tilde{u}^1_t(y) = \sum_{k=2}^\infty c_k(t)y^k,
\end{align}
and $xy$ divides $\Tilde{u}^2_t(x,y)$.
Define $d_u\coloneqq \min_{k\geq 2} \left( \mdeg(c_k(t)t^{pk})\right)$ and
\[
    D^+\coloneqq  
\begin{cases}
    d_u,& \text{if } q> d_u\\
    q,              & \text{otherwise.}
\end{cases}
\]

\subsection{Sadlle renormalization}
In this subsection let $u,s\colon \Dbb(\varepsilon) \to \C$ be holomorphic functions such that $\sup|s(t)|<1<\inf|u(t)|$. Define a family of holomorphic maps $f_t\colon\Dbb(1)^2 \to \Dbb(1)^2$ parameterized by $t\in \Dbb(\varepsilon)$ by 
\begin{align}\label{f_tsaddle}
    f_t(x,y) = (u(t)x + xy\Tilde{u}_t(x,y), s(t)y + xy\Tilde{s}_t(x,y)),
\end{align}
where $\Tilde{u}_t(x,y)$ is a power series in variables $x,y$ with coefficients holomorphic functions on $t$, the same for $\Tilde{s}_t(x,y)$.

\begin{lem}[Saddle renormalization]\label{renormalization saddle}
    Denote by $\lambda_u$ a $q$-th root of $u(0)$ and $\lambda_s$ a $p-$th root of $s(0)$.
    For all integers $n\geq 0$, define $r_n \colon \Dbb(\varepsilon) \to \C$ by $r_n(t) = \frac{t}{\lambda_u^n}.$ For $0 \leq m\leq n$, let 
\begin{align*}
    f^m_{r_n(t)}(\sigma(r_n(t))) = \left(a_m(t) , b_m(t)\right).
\end{align*}
Then up to shrinking $\varepsilon$, we have $\lim_{n\to +\infty} a_n(t) = t^q$ and $\lim_{n\to +\infty} b_n(t) = 0$, and the convergence is uniform.

Moreover, if $|\lambda_u|>|\lambda_s^{-1}|$, then $\lim_{n\to +\infty} f^{-n}_{r_n(t)}(\sigma(r_n(t))) \to (0,0),$
and the convergence is uniform.
\end{lem}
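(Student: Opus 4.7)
The plan is to treat the iteration as a small perturbation of the linear saddle dynamics $(x,y)\mapsto(u(t)x,s(t)y)$, exploiting the structural feature that every nonlinear term in~\eqref{f_tsaddle} carries a factor of $xy$. Writing $U(t):=u(r_n(t))$ and $S(t):=s(r_n(t))$, both tending uniformly on $\Dbb(\varepsilon)$ to $u(0)$ and $s(0)$ as $n\to\infty$, the iteration becomes
\begin{align*}
a_{m+1} = U(t)\,a_m + a_m b_m\,P_m,\qquad b_{m+1}=S(t)\,b_m + a_m b_m\,Q_m,
\end{align*}
where $P_m=\Tilde{u}_{r_n(t)}(a_m,b_m)$ and $Q_m=\Tilde{s}_{r_n(t)}(a_m,b_m)$ are bounded by a uniform constant $M$ as long as $(a_m,b_m)$ stays in a fixed polydisc around the origin. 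The initial data satisfy $|a_0|\le C|t|^q/|\lambda_u|^{nq}$ and $|b_0|\le C|t|^p/|\lambda_u|^{np}$.

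First I would prove by induction on $m\le n$ that $|a_m|\le 2|U|^m|a_0|$ and $|b_m|\le 2|S|^m|b_0|$. The induction step closes because $|a_m b_m|\le 4(|US|)^m|a_0 b_0|$ and $|a_0 b_0|$ is of order $|t|^{p+q}/|\lambda_u|^{n(p+q)}$, which is super-exponentially small in $n$; this keeps the cumulative multiplicative error $\prod_m(1+CM|b_m|)$ within $1+o(1)$. Next, I would unfold the telescoping identity $a_n = U^n a_0 + \sum_{m=0}^{n-1} U^{n-1-m} a_m b_m P_m$. The error sum is dominated by $C|U|^n|a_0 b_0|/(1-|S|)\le C'|t|^{p+q}/|\lambda_u|^{np}\to 0$, while the main term equals $(U/u(0))^n\cdot t^q\cdot(1+O(|r_n(t)|))$. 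Since $|U/u(0)-1|=O(|t|/|\lambda_u|^n)$, a Taylor expansion gives $n\log(U/u(0))=O(n/|\lambda_u|^n)\to 0$ uniformly on $\Dbb(\varepsilon)$, so $a_n(t)\to t^q$. The analogous argument yields $b_n=S^n b_0\cdot(1+o(1))$ with $|S^n b_0|\le C|t|^p(|s(0)|/|\lambda_u|^p)^n\to 0$ since $|s(0)|<1<|\lambda_u|^p$.

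For the backward part, I would first verify that, after shrinking $\varepsilon$, the local biholomorphism $f_t$ admits an inverse of the same structural shape
\[
f_t^{-1}(x,y)=\bigl(U^{-1}x + xy\,\Tilde{u}^-_t(x,y),\;S^{-1}y + xy\,\Tilde{s}^-_t(x,y)\bigr),
\]
which follows from the implicit function theorem: if $(x,y)=f_t(\alpha,\beta)$, the leading relation $\alpha\beta=xy/(US)+\cdots$ forces each perturbative term in the inverse to retain the $xy$-factor. Re-running the same estimates with $(U,S)$ replaced by $(U^{-1},S^{-1})$, I would obtain $|a_{-n}|\le C|t|^q/(|U|^n|\lambda_u|^{nq})$, which tends to zero unconditionally because $|U|^n|\lambda_u|^{nq}$ is of order $|u(0)|^{2n}$, and $|b_{-n}|\le C|t|^p/(|S|^n|\lambda_u|^{np})=C|t|^p(|s(0)||\lambda_u|^p)^{-n}(1+o(1))$. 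The latter vanishes precisely under the hypothesis $|\lambda_u|>|\lambda_s^{-1}|$, since $|\lambda_s|^p=|s(0)|$ makes this condition equivalent to $|s(0)||\lambda_u|^p>1$.

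The main technical obstacle will be making the induction genuinely uniform in $t\in\Dbb(\varepsilon)$ and in $0\le m\le n$ simultaneously, and in particular ensuring that both forward and backward orbits stay inside a single fixed compact polydisc throughout, so that the uniform sup-norms of $\Tilde{u}_t,\Tilde{s}_t$ and of their inverse-map analogues apply at every step. This is precisely what forces the shrinking of $\varepsilon$ in the statement: one picks $\varepsilon$ small enough that for all sufficiently large $n$ the seed $\sigma(r_n(t))$ is tiny enough that the geometric decay closes the induction for the entire orbit of length $n$, both forwards and backwards.
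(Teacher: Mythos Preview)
Your argument is correct and is precisely the kind of direct estimate the paper has in mind. In fact the paper does not spell out a proof of this lemma at all: it remarks that the saddle case is ``much simpler'' than the semi-repelling case because the normal form~\eqref{f_tsaddle} carries an $xy$ factor in every nonlinear term (equivalently, both coordinate axes are $f_t$-invariant), and suggests one could alternatively specialize the elaborate $\alpha_m,\beta_m$ bookkeeping of Lemma~\ref{renormalization semi}. Your multiplicative-error induction $|a_m|\le 2|U|^m|a_0|$, $|b_m|\le 2|S|^m|b_0|$ together with the telescoping identity is the clean way to exploit the $xy$ structure directly, and is more transparent than specializing the semi-repelling proof. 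Your structural claim about $f_t^{-1}$ follows immediately from the invariance of both axes: since $f_t^{-1}(X,0)=(u^{-1}X,0)$ exactly and $f_t^{-1}(0,Y)=(0,s^{-1}Y)$ exactly, both $X$ and $Y$ divide each coordinate of $f_t^{-1}(X,Y)$ minus its linear part.

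Two minor remarks. First, ``super-exponentially small in $n$'' is an overstatement: $|a_0b_0|$ is of order $|\lambda_u|^{-n(p+q)}$, which is only exponentially small---but that suffices. Second, your statement that $a_{-n}\to 0$ ``unconditionally'' is slightly misleading. The \emph{final estimate} $|a_{-n}|\lesssim |t|^q/|u(0)|^{2n}$ is indeed independent of the hypothesis, but the induction producing it already requires $|b_{-m}|$ to remain in the fixed polydisc for all $0\le m\le n$ (so that the uniform bound $M$ on the nonlinearities applies), and since $\sup_{m\le n}|b_{-m}|\sim |t|^p(|s(0)||\lambda_u|^p)^{-n}$, this is exactly where $|\lambda_u|>|\lambda_s^{-1}|$ enters. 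You do flag this confinement issue in your last paragraph, so the logic is sound; just avoid the word ``unconditionally'' there.
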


The proof of Lemma \ref{renormalization saddle} is much simpler than that of Lemma \ref{renormalization semi}. This is essentially because of the fact that $|s(0)|<1$, indicating the presence of a stable manifold in the dynamical setting, which simplifies the local form of $f_t$. In essence, the saddle case can be viewed as a special case of the semi-repelling scenario. Therefore, we will focus on proving Lemma \ref{renormalization semi} and omit the proof of Lemma \ref{renormalization saddle}.

\subsection{Proof of Lemma \ref{renormalization semi}}

We prove the case where $xy$ is not a common factor of $y\Tilde{u}_t(x,y)$ and $q>d_u$. The other cases follow similarly. Note that in this proof, as we shrink $\varepsilon$,  the shrinking is done in a manner independent of $m$ and $n$, allowing us to consider only very large $n.$

Define $\hat{u}(t)$ and $\hat{s}(t)$ to be 
\begin{align}\label{hatuandhats}
    u(t)=u(0)+\hat{u}(t) \ \ \ \mathrm{and}\ \ \  s(t)=s(0)+\hat{s}(t).
\end{align}
Shrinking $\varepsilon$, we can assume that
\begin{align}\label{suphatu}
    \sup_{t\in \Dbb(\varepsilon)}|\hat{u}(t)|\leq 1 \ \ \ \mathrm{and}\ \ \ \sup_{t\in \Dbb(\varepsilon)}|\hat{s}(t)|\leq 1.
\end{align}
Let $ k_1 \leq \cdots\leq  k_l$ be all the integers such that $d_u = \mdeg\left(c_{k_i}(t)t^{pk_i}\right)$. Let $w$ be a complex variable, so that the coefficient 
\begin{align}\label{c_u}
    c_u(w)
\end{align}
of $t^{d_u}$ in $\sum_{i=1}^l c_{k_i}(t)(wt)^{pk_i}$ is a polynomial of $w$ of degree $pk_l$. Note that $c_u(w)$ is not identically zero by our assumption that $x$ does not divide $\Tilde{u}_t(x,y)$. For any $n \gg 1$, set 
$$n(p,d_u)\coloneqq\lfloor n(d_u-p)/d_u +1\rfloor +1. $$ 
It will be used for the equations~\eqref{small bs} and~\eqref{n(p,d)2}. Denote by $\lambda_s$ a $p$-th root of $s(0)$.

\subsubsection{Case A: Formulas of $b_m$ and $a_m$ for small $m$}\noindent

If $z\in \C$ is a complex number, then the notation $O_1(z)$ means that $|O_1(z)/z| \leq 1$.  We will show by induction that, if $0\leq m \leq n(p,d_u)$, then
\begin{align}\label{b_m Model}
    \boxed{b_m(t) = s(0)^m r_n^p(t) + \frac{3(m+1)}{\lambda_u^{n}} O_1(r_n^p(t))} \ .
\end{align}
Setting $a_0(t) \coloneqq a(r_n(t))$. If $1\leq m \leq n(p,d_u)+1$, then we will show that
\begin{align}\label{a_m Model}
    \boxed{a_m(t) = \sum_{i=0}^{m-1}\frac{c_u(\lambda_s^i)}{\lambda_u^{id_u}}\frac{t^{d_u}}{\lambda_u^{nd_u-(m-1)d_u}} +  O_1\left(\frac{\alpha_m}{\lambda_u^{n(d_u+1)-(m-1)d_u}}\right)}\ ,
\end{align}
where  $\alpha_m$ is defined by induction as follows.
Define $\alpha_1\coloneqq2+2\lambda_u^{-n}.$ Now if $\alpha_m$ is given and the formula \eqref{b_m Model} is true for $b_m(t)$, then we define $\alpha_{m+1}$ as follows. 
By the definition \eqref{Tildeu^1_t} of $\Tilde{u}^1_t$, we have
\begin{align*}
     \Tilde{u}^1_{r_n(t)}(b_m(t)) &=\sum_{i=1}^l c_{k_i}(r_n(t)) \Big(b_m(t)\Big)^{k_i} + \sum_{k\neq k_i}c_{k}(r_n(t))\Big(b_m(t)\Big)^{k}.
\end{align*}
By the definition of $\lambda_s$ and \eqref{b_m Model}, we have
\begin{multline}
    \Tilde{u}^1_{r_n(t)}(b_m(t))=\sum_{i=1}^l c_{k_i}(r_n(t)) \Big(\lambda_s^m r_n(t)\Big)^{pk_i} \left(1+  \frac{3(m+1)}{s(0)^{m}\lambda_u^{n}} O_1(1)\right)^{k_i} \\
    + \sum_{k\neq k_i}c_{k}(r_n(t))\left( r^p_n(t)   O_1\left(s(0)^m+ \frac{3(m+1)}{\lambda_u^{n}}\right)\right)^k \eqqcolon I_1(m,n) + I_2(m,n).
\end{multline}
By the definition \eqref{c_u} of $c_u$ and the fact there are only finitely many $k_i$, there exists a positive constant $c_1>0$, independent of $m$ and $n$, such that 
\begin{align*}
    I_1(m,n)=c_u(\lambda_s^m)r_n^{d_u}(t) + \frac{c_1 3(m+1)}{\lambda_u^{n}}O_1(r^{d_u}_n(t)).
\end{align*}
Since $t^{d_u+1}\lambda_u^{-n(d_u+1)}$ divides every term of $I_2(m,n)$, shrinking $\varepsilon$, there exists a positive constant $c_2>0$, independent of $m$ and $n$, such that 
\begin{align*}
   I_2(m,n) =\frac{c_2}{\lambda_u^n}O_1(r^{d_u}_n(t)).
\end{align*}
Setting $c_3 \coloneqq c_1+c_2/(3(m+1))$, We have
\begin{align}\label{utilde1}
\begin{split}
    \Tilde{u}^1_{r_n(t)}(b_m(t)) &= c_u(\lambda_s^m)r_n^{d_u}(t) + \frac{c_3 3(m+1)}{\lambda_u^{n}}O_1(r^{d_u}_n(t))\\
    &=\frac{c_u(\lambda_s^m)}{\lambda_u^{md_u}}\frac{t^{d_u}}{\lambda_u^{nd_u-md_u}}
    + \frac{c_3 3(m+1)\varepsilon^{d_u}}{\lambda_u^{md_u}}O_1\left(\frac{1}{\lambda_u^{n(d_u+1)-md_u}}\right).    
\end{split}
\end{align}
Since $\lim_{m\to \infty}(m+1)\lambda_u^{-md_u} = 0$, we can shrink $\varepsilon$ so that, for all $m$ and $n$ large enough, we have
\begin{align}\label{epsilon(n,m)}
    \varepsilon(n,m)\coloneqq\varepsilon^{d_u} + \frac{1}{\lambda_u^{n}} +\frac{c_3 3(m+1)}{\lambda_u^{md_u}} \varepsilon^{d_u} + \varepsilon < \frac{|\lambda_u| -1}{2}.
\end{align}
We define 
\begin{align}\label{alpha_m+1}
    \alpha_{m+1} \coloneqq \alpha_m(1+\varepsilon(n,m)).
\end{align}
Note that we have the following estimate
\begin{align}\label{boundalpha1}
    \left|\prod_{i=1}^{n(p,d_u)}\alpha_i\right| = \left|\alpha_1\right|{\left(\frac{|\lambda_u|+1}{2}\right)}^{n(p,d_u)}\leq\left|\alpha_1\right|{|\lambda_u|}^{n(p,d_u)} =\frac{|\alpha_1|}{|\lambda_u|^{n-n(p,d_u)}}{|\lambda_u|}^{n}.
\end{align}

\medskip

\subsubsection{Proof of Case A}\noindent

By the definition of $b(t)$, there is a constant $c_{b_0}$ such that
\begin{align*}
    b_0(t) = b(r^p_n(t)) = r^p_n(t) + c_{b_0}\frac{t}{\lambda_u^{n}}(r^p_n(t) +\mathrm{h.o.t.} ),
\end{align*}
shrinking $\varepsilon,$ we can make sure that $|c_{b_0}\varepsilon(r^p_n(t) +\mathrm{h.o.t.})/(3r^p_n(t))| \leq 1,$ so that $b_0(t)$ has the wanted form \eqref{b_m Model}
\begin{align*}
    \boxed{b_0(t) =  r^p_n(t) + \frac{3}{\lambda_u^n}O_1(r^p_n(t))}\ .
\end{align*}

\medskip

Now we compute $a_1(t)$. By the definition \eqref{f_tsemi} of $f_t,$ we have
\begin{align*}
    a_1(t) &= u(r_n(t))a(r_n(t)) + \Tilde{u}^1_{r_n(t)}\left( b(r_n(t))\right) +\Tilde{u}^2_{r_n(t)}\left( a(r_n(t)) , b(r_n(t))\right).
\end{align*}
The first term  is
\begin{align*}
u(r_n(t))a(r_n(t))=\left(\lambda_u^{d_u} + O_1\left(\frac{1}{\lambda_u^{n}}\right)\right) \left(\frac{t^q}{\lambda_u^{nq}}(1+\mathrm{h.o.t})\right).
\end{align*}
We can reduce $\varepsilon$ so that $| \lambda_u^{d_u} \varepsilon^q(1+\mathrm{h.o.t})| \leq 1$. Since moreover $q>d_u$, we obtain that
\begin{align*}
    u(r_n(t))a(r_n(t)) = O_1\left(\frac{1}{\lambda_u^{n(d_u+1)}}\right) + \frac{1}{\lambda_u^{n}} O_1\left(\frac{1}{\lambda_u^{n(d_u+1)}}\right).
\end{align*}
By \eqref{utilde1}, shrinking $\varepsilon$, the second term is 
\begin{align*}
    \Tilde{u}^1_{r_n(t)}\left( a(r_n(t)) , b(r_n(t))\right)= c_u(1)\frac{t^{d_u}}{\lambda_u^{nd_u}} + O_1\left(\frac{1}{\lambda_u^{n(d_u+1)}}\right).
\end{align*}
Since $xy$ divides $\Tilde{u}^2_{t}(x,y)$, shrinking $\varepsilon,$ the third term is 
\begin{align*}
    \Tilde{u}^2_{r_n(t)}( a(r_n(t)) , b(r_n(t)))=O_1\left(\frac{1}{\lambda_u^{n(q+p)}}\right) = \frac{1}{\lambda_u^{n}}O_1\left(\frac{1}{\lambda_u^{n(d_u+1)}}\right).
\end{align*}
Adding the three terms, we obtain finally that
\begin{align*}
    \boxed{a_1(t) = c_u(1)\frac{t^{d_u}}{\lambda_u^{nd_u}}  + (2+\frac{2}{\lambda_u^{n}})O_1\left(\frac{1}{\lambda_u^{n(d_u+1)}}\right)}\ .
\end{align*}\\

Suppose $a_m(t)$ and $b_m(t)$ have the form \eqref{a_m Model} and \eqref{b_m Model} respectively. Let us first compute $b_{m+1}(t)$:
\begin{align*}
    b_{m+1}(t) = s(r_n(t))b_m(r_n(t)) + b_m(t)\Tilde{s}_{r_n(t)}(a_m(r_n(t)), b_m(r_n(t)) )
\end{align*}
Since $|r^p_n(\varepsilon)| + |3n\lambda_u^{-n}O_1(r^p_n(\varepsilon))|$ is uniformly bounded on $n$, and $t$ divides $\hat{s}(t)$ (recall \eqref{hatuandhats}), shrinking $\varepsilon$, we have
\begin{align*}
    \hat{s}(r_n(t))\left(r^p_n(t) + \frac{3n}{\lambda_u^{n}}O_1(r^p_n(t))\right) = \frac{1}{\lambda_u^{n}}O_1(r^p_n(t)).
\end{align*}
Hence the first term $s(r_n(t))b_m(t)$ is
\begin{align}\label{caseAb_m1}
\begin{split}
    &\ \ \ \Big(s(0) + \hat{s}(r_n(t))\Big) \left( s(0)^m r_n^p(t) + \frac{3(m+1)}{\lambda_u^{n}} O_1(r_n^p(t)\right)\\
    &=s(0)^{m+1} r_n^p(t) + \frac{3(m+1)}{\lambda_u^{n}} O_1(r_n^p(t)) + \frac{1}{\lambda_u^{n}} O_1(r_n^p(t)).
\end{split}
\end{align}
Since $np\leq nd_u-(m-1)d_u,$ if $m\geq 1,$ by the definition \eqref{a_m Model} of $a_m(t)$ and the estimate~\eqref{boundalpha1},
\begin{align}\label{small bs}
    |a_m(t)| \leq \sum_{i=0}^{m-1}\left|\frac{c_u(\lambda_s^i)}{\lambda_u^{id_u}}\right|\frac{t^{d_u}}{\lambda_u^{np}} +  \frac{|\alpha_1|}{|\lambda_u|^{n-n(p,d_u)}}O_1\left(\frac{1}{\lambda^{np}_u}\right) = O_1\left(\frac{\mathscr{E}_{\varepsilon,n}}{\lambda^{np}_u}\right)
\end{align}
where $\mathscr{E}_{\varepsilon,n} \to 0$ when $ \varepsilon \to 0$ and $n\to +\infty$.
If $m=0,$ by our assumption $q>d_u\geq p$, we have
\begin{align}\label{equation4}
a_0(t)=a(r_n(t))= O_1\left(2r_n^p(t)\right).
\end{align}
It is also clear by \eqref{b_m Model} that 
\begin{align}\label{equation3}
    b_m(t)=O_1\left(2r_n^p(t)\right).
\end{align}
Since $\Tilde{s}_t(0,0)=0$, by \eqref{small bs}, \eqref{equation4} and \eqref{equation3}, we have (for $n\gg 1$ and $\varepsilon\ll 1$)
\begin{align*}
     \Tilde{s}_{r_n(t)}(a_m(t),b_m(t)) = O_1\left(\frac{1}{\lambda^{np}_u}\right).
\end{align*}
Hence
\begin{align}\label{caseAb_m2}
    b_m(t)\Tilde{s}_{r_n(t)}(a_m(t),b_m(t)) =\frac{1}{\lambda_u^{np}} O_1(r^p_n(t))=\frac{1}{\lambda_u^{n}}O_1(r^p_n(t)).
\end{align}
Adding \eqref{caseAb_m1} and \eqref{caseAb_m2}, we obtain
\begin{align*}
    \boxed{ b_{m+1}(t) = s(0)^{m+1} r_n^p(t) + \frac{3(m+2)}{\lambda_u^{n}} O_1(r_n^p(t)).}
\end{align*}\\

Let us compute now $a_{m+1}(t)$:
\begin{align*}
    a_{m+1}(t)=u(r_n(t))a_m(t) +\Tilde{u}^1_{r_n(t)}\left( b_m(t)\right) +\Tilde{u}^2_{r_n(t)}\left( a_m(t)) , b_m(t)\right)
\end{align*}
By \eqref{suphatu}, $\hat{u}(r_n(t)) = O_1\left(\frac{1}{\lambda_u^n}\right)$. Thus the first term $u(r_n(t))a_m(t)$ is equal to
\begin{align}\label{caseAa_m1}
\begin{split}
    &\ \ \ (u(0) + \hat{u}(r_n(t))) \left(\sum_{i=0}^{m-1}\frac{c_u(\lambda_s^i)}{\lambda_u^{id_u}}\frac{t^{d_u}}{\lambda_u^{nd_u-(m-1)d_u}} +  O_1\left(\frac{\alpha_m}{\lambda_u^{n(d_u+1)-(m-1)d_u}}\right) \right)\\
    &=\sum_{i=0}^{m-1} \frac{c_u(\lambda_s^i)}{\lambda_u^{id_u}}\frac{t^{d_u}}{\lambda_u^{nd_u-md_u}} + O_1\left(\frac{\alpha_m}{\lambda_u^{n(d_u+1)-md_u}}\right)\\
    &+ \varepsilon^{d_u} O_1\left(\frac{1}{\lambda_u^{n(d_u+1)-(m-1)d_u}}\right) + \frac{\alpha_m}{\lambda^{n}}O_1\left(\frac{1}{\lambda_u^{n(d_u+1)-md_u}}\right).
\end{split}
\end{align}
Since $xy$ divides $\Tilde{u}^2_{t}(x,y)$, we can reduce $\varepsilon$ so that
\begin{align}\label{caseAa_m2}
    \Tilde{u}^2_{r_n(t)}(a_m(t),b_m(t)) = \varepsilon O_1\left(\frac{1}{\lambda_u^{n(d_u+1)-md_u}}\right).
\end{align}
Summing up \eqref{caseAa_m1}, \eqref{caseAa_m2} and \eqref{utilde1}, we have
\begin{align*}
    a_{m+1}(t) &= \sum_{i=0}^m \frac{c_u(\lambda_s^i)}{\lambda_u^{id_u}} \frac{t^{d_u}}{\lambda_u^{nd_u-md_u}}\\
    &\ \ \ +\left(\alpha_m+\epsilon^{d_u}+\frac{\alpha_m}{\lambda_u^{n}}+\frac{c_3 3(m+1)\varepsilon^{d_u}}{\lambda_u^{md_u}}   +\varepsilon\right)O_1\left(\frac{1}{\lambda_u^{n(d_u+1)-md_u}}\right)
\end{align*}
By the definition \eqref{alpha_m+1} of $\alpha_{m+1}$, we obtain finally that
\begin{align*}
    \boxed{a_{m+1} = \sum_{i=0}^m \frac{c_u(\lambda_s^i)}{\lambda_u^{id_u}} \frac{t^{d_u}}{\lambda_u^{nd_u-md_u}} + \alpha_{m+1}O_1\left(\frac{1}{\lambda_u^{n(d_u+1)-md_u}}\right)}\ .
\end{align*}\\

\subsubsection{Case B: Formulas of $b_m$ and $a_m$ for large $m$}\noindent

The next step is to give formulas of $a_{m+1}(t)$ and $b_m(t)$ for $m > n(p,d_u).$

For $m\in \N$, set $m'\coloneqq n(p,d_u)+m$. Define a sequence $\beta_{m'}$ as follows. For $m=0,$ define $\beta_{0'} = 3(n(p,d_u)+1)\lambda_u^{-n}$; then for $m\geq 1$, define 
\begin{align}\label{beta_m'}
\begin{split}
    \beta_{m'} &= \beta_{(m-1)'} + \frac{3}{\lambda_u^{n}}+\frac{\lambda_u^{(m'-2)d_u}}{\lambda_u^{nd_u}} =\frac{3(m+1)'}{\lambda_u^{n}} + \frac{\sum_{i=1}^m \lambda_u^{(i'-2)d_u}}{\lambda_u^{nd_u}}.
\end{split}
\end{align}
Now we define $\alpha_{m'}$ for $m\geq 1.$
The same reason as for \eqref{utilde1} implies that, up to shrinking $\varepsilon$, there exists a positive constant $c_4>0$, independent on $m$ and $n$, such that
\begin{align}\label{u1'}
\begin{split}
    \Tilde{u}^1_{r_n(t)}(b_{m'}(t)) &= c_u(\lambda_s^{m'})\frac{t^{d_u}}{\lambda_u^{nd_u}} + c_4 \beta_{m'} O_1(r^{d_u}_n(t))\\
    &=\frac{c_u(\lambda_s^{m'})}{\lambda_u^{m'd_u}}\frac{t^{d_u}}{\lambda_u^{nd_u-m'd_u}}
    + c_4 \beta_{m'}\lambda_u^{n-m'd_u}\varepsilon^{d_u}O_1\left(\frac{1}{\lambda_u^{n(d_u+1)-m'd_u}}\right).
\end{split}
\end{align}
Since 
\begin{align*}
    \beta_{m'}\lambda_u^{n-m'd_u} =  \frac{3(m+1)'}{\lambda_u^{m'd_u}} + \frac{1}{\lambda_u^{n(d_u-1)}\sum_{i=1}^m \lambda_u^{(m-i+2)d_u}}
\end{align*}
is uniformly bounded on $m$ and $n$, we can shrink $\varepsilon$ so that
\begin{align}\label{epsilon'}
    \varepsilon(n,m')\coloneqq2\varepsilon + \lambda_u^{-n} + c_4\beta_{m'}\lambda_u^{n-m'd_u}\varepsilon^{d_u} \leq \frac{|\lambda_u| -1}{2}.
\end{align}
For $m\geq 1,$ define 
\begin{align}\label{alpha_m'}
    \alpha_{m'+1} = \alpha_{m'}(1+\varepsilon(n,m')).
\end{align}
Then by \eqref{alpha_m+1} and \eqref{alpha_m'}, we have
\begin{align*}
    |\alpha_{m'}|\leq \alpha_{(n-n(p,d_u))'} = \alpha_n = \alpha_0\prod_{m=1}^n(1+\varepsilon(n,m)).
\end{align*}
By \eqref{epsilon(n,m)} and \eqref{epsilon'}, $\alpha_n \leq \alpha_0 {\left(\left(|\lambda_u| +1\right)/2\right)}^{n}.$ Thus
\begin{align}\label{Useepsilon(n,m)}
    \bigg| \frac{\alpha_n}{\lambda_u^{n}}\bigg|  \leq \alpha_0 \left(\frac{|\lambda_u| +1 }{2\lambda_u}\right)^{n} \longrightarrow 0, \ \ \ n\to +\infty
\end{align}

Let us show by induction that if $0 \leq m \leq n-n(p,d_u)$, then
\begin{align}\label{b' model}
    \boxed{b_{m'}(t) = s(0)^{m'}r^p_n(t) + \beta_{m'} O_1(r^p_n(t))}\ ;
\end{align}
and if $1 \leq m \leq n-n(p,d_u)$, then
\begin{align}\label{a' model}
    \boxed{a_{m'}(t) =  \sum_{i=0}^{m'-1}\frac{c_u(\lambda_s^i)}{\lambda_u^{id_u}}\frac{\lambda_u^{(m'-1)d_u}}{\lambda_u^{nd_u}} t^{d_u}+\alpha_{m'} O_1\left(\frac{\lambda_u^{(m'-1)d_u}}{\lambda_u^{n(d_u+1)}}\right)}\ .
\end{align}\\

\subsubsection{Proof of Case B}\noindent

For $m=0$, \eqref{b' model} is exactly \eqref{b_m Model}. Now if \eqref{b' model} is true for $b_{m'},$ for some $0 \leq m \leq n-n(p,d_u)-1$, we will show \eqref{b' model} for $b_{(m+1)'}$:
\begin{align*}
    b_{m'+1}(t) &= s(r_n(t))b_{m'}(t) + b_{m'}(t)\Tilde{s}_{r_n(t)}(a_{m'}(t), b_{m'}(t) ).
\end{align*}
By construction \eqref{beta_m'}, $ \beta_{m'}$ is bounded. Hence shrinking $\varepsilon$, we have
\begin{align*}
    \hat{s}(r_n(t))b_{m'}(t)=\frac{1}{\lambda_u^{n}}  O_1(r^p_n(t)).
\end{align*}
Hence
\begin{align}\label{caseBb_m1}
\begin{split}
    s(r_n(t))b_{m'}(t)&= s(0) \left(s(0)^{m'} r_n^p(t) + \beta_{m'} O_1(r_n^p(t)) \right) +  \Tilde{s}(r_n(t))b_{m'}(t)\\
    &=s(0)^{m'+1} r_n^p(t) + \beta_{m'} O_1(r_n^p(t)) +\frac{1}{\lambda^{n}}O_1(r_n^p(t)).   
\end{split}
\end{align}
Since 
\begin{align}\label{n(p,d)2}
    np-nd_u + (m'-1)d_u\geq md>1,
\end{align}
By~\eqref{Useepsilon(n,m)} and the fact that $\Tilde{s}_t(0,0)=0$, we have (for $n\gg 1$ and $\varepsilon \ll 1$),
\begin{align*}
    \Tilde{s}_{r_n(t)}(a_{m'}(t), b_{m'}(t))=O_1\left(\frac{\lambda_u^{(m'-1)d_u}}{\lambda_u^{nd_u}}\right).
\end{align*}
Hence
\begin{align}\label{caseBb_m2}
    b_{m'}(t)\Tilde{s}_{r_n(t)}(a_{m'}(t), b_{m'}(t) )=\frac{\lambda_u^{(m'-1)d_u}}{\lambda_u^{nd_u}}O_1(r^p_n(t)).
\end{align}
Summing up \eqref{caseBb_m1} and \eqref{caseBb_m2}, we obtain
\begin{align*}
    b_{m+1}(t) &= s(0)^{m'+1} r_n^p(t) +\left(\beta_{m'} +\frac{1}{\lambda^{n}}  +\frac{\lambda_u^{(m'-1)d_u}}{\lambda_u^{nd_u}}\right)O_1(r^p_n(t))\\
    &= s(0)^{m'+1} r_n^p(t) +(\beta_{m'} + \left(\beta_{m'} +\frac{3}{\lambda^{n}}  +\frac{\lambda_u^{(m'-1)d_u}}{\lambda_u^{nd_u}}\right)O_1(r^p_n(t)).
\end{align*}
By the definition \eqref{beta_m'} of $\beta_{m'+1}$, we get finally 
\begin{align*}
    \boxed{b_{m'+1}(t) = s(0)^{m'+1} r_n^p(t) + \beta_{m'+1}O_1(r^p_n(t))}\ .
\end{align*}\\

For $m\geq 1,$ suppose that $a_{m'}(t)$ and $b_{m'}(t)$ have the forms \eqref{a' model} and \eqref{b' model}, let us compute $a_{(m+1)'}(t)$:
\begin{align*}
    a_{(m+1)'}(t) = u(r_n(t))a_{m'}(t) + \Tilde{u}_{r_n(t)}^1(b_{m'}(t)) + \Tilde{u}_{r_n(t)}^2(a_{m'}(t) , b_{m'}(t)).
\end{align*}
The first term $u(r_n(t))a_{m'}(t)$ is equal to
\begin{align*}
    &\ \ \ \big(u(0)+\hat{u}(r_n(t))\big)\left(\sum_{i=0}^{m'-1}\frac{c_u(\lambda_s^i)}{\lambda_u^{id_u}}\frac{\lambda_u^{(m'-1)d_u}}{\lambda_u^{nd_u}} t^{d_u}
    +\alpha_{m'} O_1\left(\frac{\lambda_u^{(m'-1)d_u}}{\lambda_u^{n(d_u+1)}}\right)\right)\\
    &=\sum_{i=0}^{m'-1}\left(\frac{c_u(\lambda_s^i)}{\lambda_u^{id_u}}\frac{\lambda_u^{m'd_u}}{\lambda_u^{nd_u}}\right) t^{d_u} 
    + \alpha_{m'}O_1\left(\frac{\lambda_u^{m'd_u}}{\lambda_u^{n(d_u+1)}}\right)
    +\varepsilon O_1\left(\frac{\lambda_u^{(m'-1)d_u}}{\lambda_u^{n(d_u+1)}}\right)+ \frac{\alpha_{m'}}{\lambda^{n}} O_1\left(\frac{\lambda_u^{(m'-1)d_u}}{\lambda_u^{n(d_u+1)}}\right).
\end{align*}
By \eqref{a' model}, \eqref{b' model} and the fact that $\Tilde{u}^2_t(0,0)=0$, we can reduce $\varepsilon$ so that 
\begin{align*}
    \Tilde{u}^2_{r_n(t)}(a_{m'}(t), b_{m'}(t))=\varepsilon O_1\left(\frac{\lambda_u^{m'd_u}}{\lambda_u^{n(d_u+1)}}\right),
\end{align*}
Thus adding the above two terms and \eqref{u1'}, we obtain that
\begin{align*}
    a_{m'+1}(t) &= \sum_{i=0}^{m'}\frac{c_u(\lambda_s^i)}{\lambda_u^{-id_u}} \frac{\lambda_u^{m'd_u}}{\lambda_u^{nd_u}}t^{d_u}
    +\left(\alpha_{m'} +2\varepsilon +\frac{\alpha_{m'}}{\lambda_u^{n}}+c_4\beta_{m'}\lambda_u^{n-m'd_u}\varepsilon^{d_u}\right) O_1\left(\frac{\lambda_u^{m'd_u}}{\lambda_u^{n(d_u+1)}}\right).
\end{align*}
Since $\alpha_{m'}>1$, and by the definition \eqref{alpha_m'} of $\alpha_{(m+1)'}$, we have
\begin{align*}
    &\ \ \ \alpha_{m'} +2\varepsilon +\frac{\alpha_{m'}}{\lambda_u^{n}}+c_3\beta_{m'}\lambda_u^{n-m'd_u}\varepsilon^{d_u}\\
    &=\alpha_{m'}\left( 1+2\varepsilon + \frac{1}{\lambda_u^{n}} + c_3\beta_{m'}\lambda_u^{n-m'd_u}\varepsilon^{d_u}\right)\\
    &=\alpha_{m'}(1+\varepsilon(n,m'))\\
    &=\alpha_{m'+1},
\end{align*}
Thus we get finally
\begin{align*}
    \boxed{a_{m'+1}(t) = \sum_{i=0}^{m'}\frac{c_u(\lambda_s^i)}{\lambda_u^{-id_u}} \frac{\lambda_u^{m'd_u}}{\lambda_u^{nd_u}}t^{d_u}+ \alpha_{m'+1}O_1\left(\frac{\lambda_u^{m'd_u}}{\lambda_u^{n(d_u+1)}}\right)}\ .
\end{align*}\\

\subsubsection{Convergence of $a_m$ and $b_m$}
Let $m=n-n(p,d_u)$, by \eqref{a' model}, we obtain that
\begin{align*}
    a_n(t) = a_{m'}(t) = \sum_{i=0}^{n-1} \frac{c_u(\lambda_s^i)}{\lambda_u^{(i+1)d_u}}t^{d_u} + \frac{\alpha_n}{\lambda_u^{n}} O_1\left(\frac{1}{\lambda_u^{d_u}}\right).
\end{align*}

It implies  that
\begin{align}\label{limita_n}
    \lim_{n\to \infty} a_n(t) = \sum_{i=0}^{\infty}\frac{c_u(\lambda_s^i)}{\lambda_u^{(i+1)d_u}} t^{d_u}.
\end{align}
It's also straightforward to see that $\lim_{n\to \infty} b_n(t)=0$.
Finally we obtain that
\begin{align*}
    \boxed{\lim_{n\to \infty}f^n_{r_n(t)}(\sigma(r_n(t)))= \big(\lim_{n\to \infty} a_n(t),0\big)}\ .
\end{align*}\\

\subsubsection{$\lim_n a_n(t)$ is non constant}\noindent

Let us show that up to taking an iterate of $f_t$, the function \eqref{limita_n} is not constant.
By~\eqref{c_u}, we can write $c_u(w) = \sum_{j=2}^{d_u}e_jw^{pj}$, where $e_j$ is a complex number. 
By the geometric sum formula,
\begin{align*}
    \sum_{i=0}^{\infty}\frac{c_u(\lambda_s^i)}{\lambda_u^{
    (i+1)d_u}} &= \sum_{j=2}^{d_u} e_j \sum_{i=0}^{\infty}\frac{s(0)^{ji}}{\lambda_u^{(i+1)d}} =\sum_{j=2}^{d_u}\frac{e_j}{\lambda_u^{d_u} - s(0)^{j}}\\
    &=\frac{(\sum_{j=2}^{d_u}e_j)(\lambda_u^{d_u})^{d_u-2}+\sum_{i=3}^{d_u}L_i((e_j)_j)(\lambda_u^{d_u})^{d_u-i}}{\prod_{j=2}^{d_u}(\lambda_u^{d_u} - s(0)^j)} \numberthis \label{numerator},
\end{align*}
where $L_i((e_j)_j)$ is a linear form on $(e_j)_j$ with coefficients some powers of $s(0)$. It suffices to show that after taking a large iterate of $f_t$, the numerator of~\eqref{numerator} is non vanishing. 

Setting 
\begin{align*}
    \Tilde{U}^2_t(x,y)\coloneqq  \Tilde{u}^2_t(f_t(x,y))=\Tilde{u}^2_t(u(t)x + y\Tilde{u}_t(x,y),s(t)y+y\Tilde{s}_t(x,y)),
\end{align*}
and
\begin{align*}
    \Tilde{U}^1_t(x,y)&\coloneqq\Tilde{u}^1_t(s(t)y+y\Tilde{s}_t(x,y)) =\sum_{k=2}^\infty c_k(t)(s(t)y+y\Tilde{s}_t(x,y))^k\\
    &=\sum_{i=1}^l c_{k_i}(t)s(0)^{k_i}y^{k_i} + \Tilde{V}^1_t(y) + \Tilde{V}^2_t(x,y),
\end{align*}
where $\mdeg \left(\Tilde{V}^1_t(t^p)\right)> d_u$ and $x$ divides $\Tilde{V}^2_t(x,y).$
We have
\begin{align*}
    u(t)y\Tilde{u}_t(x,y)=u(t)(\Tilde{u}^1_t(y) + \Tilde{u}^2_t(x,y))=\sum_{k=2}^\infty u(0)c_k(t)y^k +\hat{u}(t)\Tilde{u}_t^1(y)+u(t)\Tilde{u}_t^2(x,y).
\end{align*}
Thus the first coordinate of $f_t^2(x,y)$ is equal to
\begin{multline*}
        u(t)\Big(u(t)x + y\Tilde{u}_t(x,y)\Big) + \Tilde{U}^1_t(y) + \Tilde{U}^2_t(x,y)\\
    =u(t)^2x+ \sum_{i=1}^l c_{k_i}(t)\big(u(0)+s(0)^{k_i}\big)y^{k_i} +V_t^{1,1}(y)+V_t^{2,1}(x,y).
\end{multline*}
Here $\mdeg \left(V_t^{1,1}(t^p) \right)>d_u$ and $x$ divides $V_t^{2,1}(x,y)$ By induction, for any $l\in \Z_{\geq 1}$, the first coordinate of $f^{2^l}_t(x,y)$ is equal to
\begin{align*}
    u(t)^{2^k}x + \sum_{i=1}^l c_{k_i}(t)\prod_{i=0}^{l-1}\big(u(0)^{2^i}+s(0)^{2^ik_i}\big)y^{k_i}+V_t^{1,l}(y)+V_t^{2,l}(x,y),
\end{align*}
where $\mdeg \left(V_t^{1,l}(t^p) \right)>d_u$ and $x$ divides $V_t^{2,l}(x,y)$.

We just showed that taking an iteration $f^{2^l}_t$ will change $e_j$ to $\prod_{i=0}^{l-1}\big(u(0)^{2^i}+s(0)^{2^ik_i}\big) e_j$, $\lambda_u$ to $\lambda_u^{2^l}$ and $s(0)$ to $s(0)^{2^l}$, but the degree $d_u$ stays invariant. Thus the numerator of the fraction~\eqref{numerator} becomes
\begin{align*}
    \prod_{i=0}^{l-1}\left(u(0)^{2^i}+s(0)^{2^ik_i}\right) \left( (\sum_{j=2}^{d_u}e_j)(\lambda_u^{2^ld_u})^{d_u-2}+\sum_{i=3}^{d_u}L_i((e_j)_j)(\lambda_u^{2^ld_u})^{d_u-i}  \right)
\end{align*}
Thus for $l$ large enough, it does not vanish.

\subsubsection{Vanishing of $\lim_{n\to +\infty}f^{-n}_{r_n(t)}(\sigma(r_n(t)))$}\noindent

Let us restate the question. 
Let $u,s \colon \Dbb(\varepsilon) \to \C$ be holomorphic functions such that, 
\begin{align*}
     |s(0)|=1,\ \   \sup|u(t)|<1,\ \ \mathrm{and}\ \  \sup|u(t)|<\inf|s(t)|.
\end{align*}
Define a family of holomorphic maps $f_t\colon\Dbb(1)^2 \to \Dbb(1)^2$ parameterized by $t\in \Dbb(\varepsilon)$ by 
\begin{align*}
    f_t(x,y) = (u(t)x + y\Tilde{u}_t(x,y), s(t)y + y\Tilde{s}_t(x,y)),
\end{align*}
where $\Tilde{u}_t(x,y)$ is a power series in variables $x,y$ coefficients holomorphic function on $t$, such that $\Tilde{u}_t(0,0)=0,$ the same for $\Tilde{s}_t(x,y)$.
Let $\lambda$ be a complex number such that $|\lambda|>1.$ Define $r_n(t)\coloneqq\lambda^n t$. Fix $n\gg 1$, as above, we define $a_m(t)$ and $b_m(t)$ by 
\begin{align*}
    f^m_{r_n(t)}(\sigma(r_n(t)))=(a_m(t),b_m(t)).
\end{align*}
For $1\leq m\leq n$, define 
\[
S_m\coloneqq\left(\frac{|\lambda|+1}{2} +\sum_{j=1}^{m-1} \left(\frac{|\lambda|+1}{2}\right)^j\right)\bigg/|\lambda|^n.
\] 
To show $\lim_{n\to +\infty}f^{n}_{r_n(t)}(\sigma(r_n(t)))$, it suffices to show that
\begin{align}\label{vanishing_model_a_m}    |a_m(t)|\leq S_m|t|,
\end{align}
and
\begin{align}\label{vanishing_model_b_m}
    |b_m(t)|<|t|\bigg| \frac{|\lambda|+1}{2}\bigg|^m\bigg/|\lambda|^n.
\end{align}
Shrinking $\varepsilon,$ since $|u(0)|<1$, we can let $|u(t)a(t)|+|b(t)\Tilde{u}_s(a(t),b(t))|<|t|,$
so that \eqref{vanishing_model_a_m} is true for $m=1.$
There exists $\varepsilon'\ll 1$ such that
\begin{align*}
    \sup_{t\in\Dbb(\varepsilon')}|s(t)|+\sup_{t,x,y\in\Dbb(\varepsilon')}|\Tilde{s}_t(x,y)| < \left|\frac{|\lambda|+1}{2}\right|.
\end{align*}
Let $\varepsilon\ll\varepsilon'$
Then for $t,x,y \in \Dbb(\varepsilon')$, 
\begin{align}\label{vanishing_inequality_b_m}
    |s(t)y+y\Tilde{s}_t(x,y)| < \bigg|\frac{|\lambda|+1}{2} y\bigg|,
\end{align}
so that \eqref{vanishing_model_b_m} is true for $m=1$

Suppose $\eqref{vanishing_model_a_m}$ and \eqref{vanishing_model_b_m} are true for $m.$ 
Shrink $\varepsilon'$ such that we have $\sup_{x,y\in \Dbb(\varepsilon')}|\Tilde{u}_t(x,y)| <1.$ Shrink $\varepsilon$ so that for all $1\leq m\leq n$, we have $S_m \varepsilon \leq \varepsilon'$ and 
\[|t|\bigg| \frac{|\lambda|+1}{2}\bigg|^m\bigg/|\lambda|^n\leq \varepsilon'.\]
Then
\begin{align*}
    |u(r_n(t))(a_m(t))+b_m(t)\Tilde{u}_{r_n(t)}(a_m(t),b_m(t))| \leq a_m(t)+b_m(t)=S_{m+1}|t|.
\end{align*}
Hence $\eqref{vanishing_model_a_m}$ is true for $m+1.$
By \eqref{vanishing_inequality_b_m}, it is clear that  \eqref{vanishing_model_b_m} is true for $m+1$.

\section{Marked points with proportional forward and backward measures}\label{sectionmarkedpoints}

Let $\Lambda$ be a Riemann surface and $\sigma\colon\Lambda \to \C^2$ be a marked point. When no confusion can arise, we use the same notation $\sigma$ to denote its graph function $\mathrm{id}\times \sigma\colon \Lambda\to\Lambda\times\C^2.$ Let $f:\Lambda\times\C^2 \to \Lambda\times\C^2$ be a holomorphic (not necessarily algebraic) family of regular plane polynomial automorphisms of degree $d\geq 2$ parameterized by $\Lambda,$ defined by $f(t,z)=(t,f_t(z))$.

In this section, we assume that the two Green measures $\mu_{f,\sigma}^+$ and $\mu_{f,\sigma}^-$ (recall the definitions~\ref{Greenmeasuremarkedpoint}) are non-vanishing and proportional. Thus, there exists a positive constant $\gamma >0$ and a harmonic function $H$ on $\Lambda$ such that
\begin{align}\label{proportional}
    G^+_t \circ \sigma(t) = \gamma G^-_t \circ \sigma(t) + H(t).
\end{align}

Let $t\in \Lambda$ be a parameter such that $\sigma(t)$ is $f_t$-periodic of period $k$. Let $u(t)$ and $s(t)$ be the two eigenvalues of the differential of $f_t^k$ at $\sigma(t)$. We say that $\sigma(t)$ (or just $t$) is 
\begin{enumerate}
    \item \emph{saddle} if $|u(t)| > 1 > |s(t)|$;
    \item \emph{semi-repelling} if $|u(t)|>1$ and $|s(t)|=1$;
    \item \emph{semi-attracting} if $|u(t)|=1$ and $|s(t)|<1$;
    \item \emph{repelling} if $|u(t)|>1$ and $|s(t)|>1$;
    \item \emph{attracting} if $|u(t)|<1$ and $|s(t)|<1$;
    \item \emph{neutral} otherwise.
\end{enumerate}
We will study periodic parameters $t$ based on the type of the multipliers, under the assumption~\eqref{proportional}.

\subsection{Saddle parameters}
\begin{prop}\label{saddle parameters}
    Let $f_t$ be a holomorphic family of regular plane polynomial automorphisms of degree $d\geq 2$ parameterized by a Riemann surface $\Lambda$ and $\sigma\colon\Lambda \to \C^2$ a marked point. Suppose that $\mu_{f,\sigma}^+\neq 0$ is proportional to $\mu^-_{f,\sigma}\neq 0$. If $\sigma(t_0)$ is a saddle periodic point, then the Jacobian $\Jac(f_{t_0})$ is a root of unity.
\end{prop}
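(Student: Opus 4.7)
My plan is to apply the saddle renormalization of Lemma \ref{renormalization saddle} at the saddle periodic parameter $t_0$ to extract asymptotic expansions of the fibered Green functions along $\sigma$, and then combine these with the harmonic identity $G^+\circ\sigma - \gamma G^-\circ\sigma = H$ coming from the proportionality of $\mu^\pm_{f,\sigma}$. Replacing $f$ by a suitable iterate makes $\sigma(t_0)$ a fixed point without altering the Green functions or the proportionality; after a holomorphic family of local coordinate changes, $f_t$ takes the form \eqref{f_tsaddle} near the origin with multipliers $u(t), s(t)$ satisfying $|s(t_0)|<1<|u(t_0)|$. Writing $\sigma(t) = (a(t), b(t))$, set $q = \mdeg(a)$ and let $\lambda_u$ be a $q$-th root of $u(t_0)$.

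Applying Lemma \ref{renormalization saddle} with $r_n(t) = t/\lambda_u^n$ and using $G^+_t\circ f_t = d\,G^+_t$,
\[
d^n\, G^+_{r_n(t)}\bigl(\sigma(r_n(t))\bigr) = G^+_{r_n(t)}\bigl(f^n_{r_n(t)}(\sigma(r_n(t)))\bigr) \;\longrightarrow\; G^+_{t_0}(t^q,0) =: g^+(t),
\]
uniformly on a small disk around $0$. The limit $g^+$ is non-negative subharmonic, vanishes at $0$, and is not identically zero: the image curve $\{(t^q,0)\}$ lies on the local unstable manifold of the saddle, and $G^+_{t_0}$ is strictly positive on a dense subset of this curve near $0$ because the forward dynamics expands along $W^u$ and the local unstable leaves are not contained in $K^+_{t_0}$.

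Now assume $|\Jac(f_{t_0})|>1$, i.e.\ $|\lambda_u|>|\lambda_s|^{-1}$. Then Lemma \ref{renormalization saddle} also gives $f^{-n}_{r_n(t)}(\sigma(r_n(t)))\to(0,0)$, so via $G^-_t\circ f_t^{-1} = d\, G^-_t$ we get $d^n G^-_{r_n(t)}(\sigma(r_n(t)))\to 0$ uniformly. Substituting the proportionality into this asymptotic and multiplying by $d^n$ yields
\[
g^+(t) = \lim_n d^n H(r_n(t)).
\]
The right-hand side is a uniform limit of harmonic functions, hence harmonic; but then the subharmonic function $g^+$ is simultaneously harmonic, non-negative, and has an interior zero at $0$, so by the maximum principle $g^+\equiv 0$, contradicting the previous paragraph. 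Therefore $|\Jac(f_{t_0})|\le 1$, and the symmetric argument applied to $f^{-1}$ (interchanging the roles of the forward and backward Green functions and of $u(t_0), s(t_0)$) gives the reverse inequality, so $|\Jac(f_{t_0})|=1$.

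The remaining step, which I expect to be the main obstacle, is to upgrade $|\Jac(f_{t_0})|=1$ to the full root-of-unity statement. In this marginal case Lemma \ref{renormalization saddle} no longer drives $f^{-n}_{r_n(t)}(\sigma(r_n(t)))$ to $(0,0)$, and the backward iterate instead oscillates on a bounded scale whose complex phase is governed by $\Jac(f_{t_0})^n$. Writing $H = \mathrm{Re}\,F$ with $F(t)=\sum c_k t^k$ near $t_0$ and matching the expansion $d^n H(r_n(t))=\mathrm{Re}\sum_k c_k (d/\lambda_u^k)^n t^k$ coefficient-by-coefficient against the finer renormalization asymptotic of $d^n(G^+-\gamma G^-)(r_n(t))$, every complex phase that contributes a nonzero term in the limit, and in particular the phase $\Jac(f_{t_0})^n$ carried by the non-vanishing backward contribution, must stabilize along the full sequence $n$. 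This rigidifies the argument of $\Jac(f_{t_0})$ to be a rational multiple of $2\pi$; combined with $|\Jac(f_{t_0})|=1$, we conclude that $\Jac(f_{t_0})$ is a root of unity. Undoing the initial iteration preserves this conclusion since a power of a root of unity is again one.
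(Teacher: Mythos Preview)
Your renormalization step correctly reproduces the paper's Step~1, but you have misidentified what it proves. The condition in Lemma~\ref{renormalization saddle} is $|\lambda_u|>|\lambda_s|^{-1}$, where $\lambda_u$ is a $q$-th root of $u(t_0)$ and $\lambda_s$ is a $p$-th root of $s(t_0)$, with $q=\mdeg(a)$ and $p=\mdeg(b)$. Since $|\Jac(f_{t_0})|=|u(t_0)s(t_0)|=|\lambda_u|^q|\lambda_s|^p$, the inequality $|\lambda_u|>|\lambda_s|^{-1}$ is \emph{not} equivalent to $|\Jac(f_{t_0})|>1$ unless $p=q$, and there is no a~priori reason for the orders of tangency of $\sigma$ to the two invariant manifolds to coincide. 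So what your harmonic-limit contradiction (and its symmetric counterpart for $f^{-1}$) actually yields is only $|u(t_0)|^{1/q}=|s(t_0)|^{-1/p}$, which is exactly the paper's Step~1; it does \emph{not} give $|\Jac(f_{t_0})|=1$.

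Consequently the hard part of the proof is entirely missing from your sketch. In the paper, after obtaining $|\lambda_u|=|\lambda_s|^{-1}$ one writes $\lambda_u=e^{i\theta\pi}\lambda_s^{-1}$ and: (Step~2) shows $\theta\in\Q$ by observing that irrational $\theta$ would make $\ddc(G^+_{t_0}\circ\rho_0^u)(t^q)$ rotation-invariant near $0$, which is impossible; (Step~3) passes to the subsequence $2k_2n$ to obtain the fixed-parameter relation $G^+_{t_0}\circ\rho^u_0(t^q)=\gamma\,G^-_{t_0}\circ\rho^s_0(t^p)+\tilde H(t)$; and (Step~4) computes, via Pesin theory and \cite[Proposition~3.1]{DFManinMumford2017}, the lower H\"older exponents of $G^\pm_{t_0}$ along the unstable/stable manifolds in terms of the Lyapunov exponents $\chi^{u/s}_0$, deducing $\chi^u_0=-\chi^s_0$, hence $|\Jac(f_{t_0})|=e^{\chi^u_0+\chi^s_0}=1$. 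Only then does $|u(t_0)|=|s(t_0)|^{-1}$ combine with $|\lambda_u|=|\lambda_s|^{-1}$ to force $p=q$, and finally $\Jac(f_{t_0})=\lambda_u^p\lambda_s^p=e^{i\theta\pi p}$ is a root of unity. Your coefficient-matching heuristic for the phase $\Jac(f_{t_0})^n$ does not substitute for any of this: without $p=q$ the phase governing the backward rescaling is $(\lambda_s\lambda_u)^n$, not $\Jac(f_{t_0})^n$, and even once $\theta\in\Q$ is known you still need the Lyapunov/H\"older argument to pin down $|\Jac|=1$ and $p=q$.
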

There are two key ingredients in the proof. The first is the renormalization lemma \ref{saddleparameterslemma} for Green functions, which follows from Lemma \ref{renormalization saddle}. The second is the lower H\"{o}lder exponents of continuity of Green functions, an idea borrowed from Dujardin and Favre \cite{DFManinMumford2017}. In fact, their computations in \cite[Sect. 3]{DFManinMumford2017} are local, except for \cite[Lemma 3.3]{DFManinMumford2017}, which deals with algebraic curves. Nevertheless, we can replace \cite[Lemma 3.3]{DFManinMumford2017} with a local argument involving the construction of Pesin box (see, e.g., \cite[Sect. 4]{BLS93}), up to removing a subset of arbitrary small measure (see Lemma \ref{saddleparameterslemma2}). Another slight difference compared to \cite[Sect. 3]{DFManinMumford2017} is the possible higher-order tangency of the marked point, namely it can be the case that $p,q \geq 2$ in Lemma \ref{saddleparameterslemma} and the equation \eqref{Proportional saddle}.

\begin{proof}
Up to replacing $f$ by an iterate, we can suppose $\sigma(t_0)$ is fixed by $f_{t_0}$.

\textbf{Step 0: A renormalization lemma for Green functions.}
Denote by $\Dbb(r)$ the disk of radius 0 centered at the origin in the complex plane. Let $\sigma_0$ be the local analytic continuation of the saddle  point $\sigma(t_0)$ over an analytic open subset $U\subset \Lambda$ given by the implicit function theorem. Let $V\subset \C^2$ be a small open neighborhood of $\sigma(t_0)$. Let $\Psi$ be a biholomorphism such that, shrinking $V$ and $U$ if necessary, the following diagram commutes.
\[
\begin{tikzcd}
\mathbb{D}(\varepsilon)\times \mathbb{D}(1)^2 \arrow[d] \arrow[rr, "\psi"] &  & U\times V \arrow[d] \\
\mathbb{D}(\varepsilon) \arrow[rr, "\phi"']                                 &  & U                  
\end{tikzcd}
\]
The two vertical morphisms are the structural projections and $\varepsilon \ll 1.$ Moreover, the following is true. In the new coordinates, $\Dbb(1)^2$ is the product of the local unstable manifold and the local stable manifold above
any parameter $t\in \Dbb(\varepsilon)$. By abuse of notation, still denote by $\sigma(t)$ its conjugate $\psi^{-1}_t\circ \sigma\circ\phi(t)$, by $f_t$ its conjugate $\psi^{-1}_t \circ f_t\circ \psi_t$ and by $G^\pm_t$ its 
conjugate $G^\pm_t\circ \psi_t$. Denote by $G^\pm_0= G^\pm_{t_0}$. We can write
\begin{align*}
    \sigma(t)&=(\alpha(t),\beta(t)) \\
    f_t(x,y) &= ( u(t)x + xyu_t'(x,y), s(t)y +xys_t'(x,y)),
\end{align*}
where $\alpha(t)=t^q + \mathrm{h.o.t.}$, $\beta(t)=t^p +\mathrm{h.o.t.}$ with $p,q\in \Z_{\geq 1}$ and $u(t)$ (resp. $s(t)$)
the unstable (resp. stable) multiplier of $f_t$ at $\sigma(t)$. By Lemma \ref{renormalization saddle} we have

\begin{lem}[Renormalization of Green functions: saddle case]\label{saddleparameterslemma}
    There exist a positive integer $q\geq 1$ and a parameterization $\rho^u_0\colon \Dbb(\varepsilon) \to W^u_{\mathrm{loc}}(0)$ of the local unstable manifold of $\sigma(t_0)$ such that the following is true. Let $\lambda_u$ be a $q$-th root of $u(0)$ and $r^u_n(t) \coloneqq \frac{t}{\lambda_u^n}$. Then
    \begin{align*}
        G^+_{0} \circ \rho^u_0 (t^q) = \lim_{n\to +\infty} G^+_{r^u_n(t)}(f_{r^u_n(t)}^n(\sigma({r^u_n(t)})))= \lim_{n\to +\infty} d^nG^+_{r^u_n(t)}(\sigma({r^u_n(t)})),
    \end{align*}
    and the convergence is uniform.

    There exist positive integer $p\geq 1$ and a parameterization $\rho^s_0\colon \Dbb(\varepsilon) \to W^s_{\mathrm{loc}}(0)$ of the local stable manifold of $\sigma(0)$ such that the following is true. Let $\lambda_s$ be a $p-$th root of $s(0)$ and $r^s_n(t) \coloneqq \lambda_s^n t$. Then
    \begin{align*}
        G^-_{0} \circ \rho^s_0 (t^p) = \lim_{n\to +\infty} G^-_{r^s_n(t)}(f_{r^s_n(t)}^{-n}(\sigma({r^s_n(t)})))= \lim_{n\to +\infty} d^nG^-_{r^s_n(t)}(\sigma(r^s_n(t))),
    \end{align*}
    and the convergence is uniform.

    If moreover $|\lambda_u|>|\lambda_s^{-1}|$, then $\lim_{n\to +\infty}G^-_{r^u_n(t)}(f^{-n}_{r^u_n(t)}(\sigma(r^u_n(t))))=0,$ and the convergence is uniform.
\end{lem}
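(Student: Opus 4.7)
The plan is to derive the lemma as a direct consequence of the saddle dynamics renormalization (Lemma \ref{renormalization saddle}) together with two standard facts about the fibered Green functions: joint continuity in $(t,z)$, and the functional equation $G^\pm_t\circ f_t^{\pm 1}=d\,G^\pm_t$. All the technical work has been done in Lemma \ref{renormalization saddle}; the task here is simply to translate orbit convergence into Green-function convergence.

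I would first set up the unstable side. In the coordinates produced by Step 0, the map $f_t$ has exactly the normal form required by Lemma \ref{renormalization saddle}, and the marked point is $\sigma(t)=(\alpha(t),\beta(t))$ with $\alpha(t)=t^q+\mathrm{h.o.t.}$ and $\beta(t)=t^p+\mathrm{h.o.t.}$. In these coordinates the local unstable manifold of $(0,0)$ is the horizontal disc $\{y=0\}$, so I define $\rho^u_0\colon\Dbb(\varepsilon)\to W^u_{\mathrm{loc}}(0)$ by $\rho^u_0(s):=(s,0)$. For $\lambda_u$ a $q$-th root of $u(0)$ and $r^u_n(t)=t/\lambda_u^n$, Lemma \ref{renormalization saddle} yields $f^n_{r^u_n(t)}(\sigma(r^u_n(t)))\to (t^q,0)=\rho^u_0(t^q)$ uniformly on $\Dbb(\varepsilon)$. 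Since $r^u_n(t)\to 0$ uniformly as well, joint continuity of the fibered Green function gives
\begin{align*}
\lim_{n\to\infty} G^+_{r^u_n(t)}\bigl(f^n_{r^u_n(t)}(\sigma(r^u_n(t)))\bigr) = G^+_0\bigl(\rho^u_0(t^q)\bigr).
\end{align*}
Iterating the functional equation $G^+_t\circ f_t=d\,G^+_t$ gives $G^+_{r^u_n(t)}(f^n_{r^u_n(t)}(z))=d^n\,G^+_{r^u_n(t)}(z)$, which supplies the equality between the middle and right-hand limits.

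The stable side is entirely parallel: apply Lemma \ref{renormalization saddle} to $f_t^{-1}$, whose unstable multiplier in the given coordinates is $s(t)^{-1}$ and which has the required $xy$-divisibility. Swapping the roles of the two coordinates turns $\beta(t)$ into the ``$a$''-function, giving rescaling factor $r^s_n(t)=\lambda_s^n t$ with $\lambda_s^p=s(0)$; define $\rho^s_0(s):=(0,s)$ as the parameterization of the local stable manifold, and conclude as above using $G^-_t\circ f_t^{-1}=d\,G^-_t$. Finally, for the \emph{moreover} assertion, the second part of Lemma \ref{renormalization saddle} (which is precisely where the hypothesis $|\lambda_u|>|\lambda_s^{-1}|$ is used) gives $f^{-n}_{r^u_n(t)}(\sigma(r^u_n(t)))\to(0,0)$ uniformly; since $(0,0)=\sigma(t_0)$ is $f_{t_0}$-periodic, its backward orbit is bounded, so $(0,0)\in K^-_{t_0}$ and $G^-_0(0,0)=0$, and continuity of $G^-$ produces the claimed vanishing. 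Since all serious estimates are inherited from Lemma \ref{renormalization saddle}, the only step that needs minor care is verifying that $f_t^{-1}$ still fits the saddle normal form of that lemma in the chosen coordinates, which is immediate from the invariance of the two coordinate axes under $f_t$.
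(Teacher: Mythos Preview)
Your proposal is correct and is precisely the approach the paper intends: it states this lemma immediately after the words ``By Lemma \ref{renormalization saddle} we have'', offering no further argument. You have simply spelled out the routine passage from the orbit convergence of Lemma \ref{renormalization saddle} to Green-function convergence via joint continuity of $G^\pm$ and the functional equation $G^\pm_t\circ f_t^{\pm 1}=d\,G^\pm_t$, together with the observation that in the Step~0 coordinates the local (un)stable manifolds are the coordinate axes, so $\rho^u_0(s)=(s,0)$ and $\rho^s_0(s)=(0,s)$.
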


\textbf{Step 1: Show $|\lambda_u|=|\lambda_s^{-1}|.$} Suppose by contradiction that $|\lambda_u| > |\lambda_s^{-1}|$. Then by Lemma \ref{saddleparameterslemma} and \eqref{proportional},
\begin{align*} 
     G^+_{0} \circ \rho^u_0 (t^q) &=\lim_{n\to +\infty} d^{n}G^+_{r^u_n(t)}(\sigma(r^u_n(t))) =\lim_{n\to +\infty} \gamma d^{n}G^-_{r^u_n(t)}(\sigma(r^u_n(t))) + d^{n} H(r^u_n(t))\\
     &=\lim_{n\to +\infty} \gamma G^-_{r^u_n(t)}(f^{-n}_{r^u_n(t)}(\sigma(r^u_n(t)))) + d^{n} H(r^u_n(t) = \lim_{n\to \infty} d^{n} H(r^u_n(t)).
\end{align*}
Hence $G^+_{0} \circ \rho^u_0 (t^q)$ is harmonic, whence the contradiction. By symmetry, $|\lambda_u|=|\lambda_s^{-1}|$, an thus there exists $ \theta \in [0,2)$ such that $\lambda_u = e^{i\theta \pi} \lambda_s^{-1}.$ 

\textbf{Step 2: Show that $\theta$ is rational.}
Suppose by contradiction that $\theta$ is irrational. Let $\xi$ be a complex number on the unit circle. There exists a subsequence $n_j$ of $\N$ such that $e^{i\theta \pi n_j} \to \xi^{-1/p}$. Again by Lemma \ref{saddleparameterslemma} and \eqref{proportional}, we have
\begin{align*}
    G^+_{0} \circ \rho^u_0 (t^q) &=\lim_{n_j\to +\infty} \gamma d^{n_j}G^-_{r^u_{n_j}(t)}(\sigma(r^u_{n_j}(t))) + d^{n_j} H(r^u_{n_j}(t))\\
    &=\lim_{n_j\to +\infty} \gamma d^{n_j}G^-_{r^s_{n_j}(e^{-i\theta \pi n_j} t)}(\sigma(r^s_{n_j}(e^{-i\theta \pi n_j} t))) + d^{n_j} H(r^u_{n_j}(t))\\
    &= \gamma G^-_{0} \circ \rho^s_0 (\xi t^p) + \lim_{n_j\to +\infty} d^{n_j} H(r^u_{n_j}(t)) .
\end{align*}
The measure $\ddc G^+_{0} \circ \rho^u_0 (t^q) $ is thus rotation-invariant in a neighborhood of the origin, which is impossible (see \cite[p. 3450]{DFManinMumford2017}).

\textbf{Step 3: Applying \cite[Proposition 3.1]{DFManinMumford2017} of Dujardin and Favre.} Write $\theta=k_1/k_2,$ where $k_1\in \Z$ and $k_2\in \N^*.$ We have
\begin{align*}
     G^+_{0} \circ \rho^u_0 (t^q) &=\lim_{n\to +\infty} \gamma d^{2k_2n}G^-_{r^u_{2k_2n}(t)}(\sigma(r^u_{2k_2n}(t))) + d^{2k_2n} H(r^u_{2k_2n}(t))\\
    &=\lim_{n \to +\infty} \gamma d^{2k_2 n}G^-_{r^s_{2k_2n}( t)}(\sigma(r^s_{2k_2n}(t))) + d^{2k_2n} H(r^s_{2k_2 n}(t)).
\end{align*}
Setting $\Tilde{H}(t)\coloneqq \lim_{n\to \infty} d^{2k_2 n} H(r^s_{2k_2 n}(t))$, which is harmonic, we obtain
\begin{align}\label{Proportional saddle}
    G^+_{0} \circ \rho^u_0 (t^q) = \gamma G^-_{0} \circ \rho^s_0 ( t^p) + \Tilde{H}(t).
\end{align}
Let $\phi_q \colon t \mapsto t^q$ and $\phi_p t \colon \mapsto t^p$. Set $\mu^\pm_0 \coloneqq dd^c G^{\pm}_{0}\circ\rho^{u/s}_0$. Denote by $\chi_0^{u/s}$ the Lyapunov exponents of $f_{t_0}$ relative to the measure $\mu_{f_{t_0}}$ of maximal entropy of $f_{t_0}$, i.e.
\begin{align*}
    \chi_0^{u/s}\coloneqq\lim_{n\to +\infty}\frac{\pm 1}{n}\int \log \lVert Df^{\pm n}_x\rVert d\mu_{f_{t_0}}(x).
\end{align*}

\begin{lem}\label{saddleparameterslemma2}
There exist subsets $C^\pm \subset \Dbb(\varepsilon)$ with 
$\left( \phi_{q/p}^{*} \mu^\pm_0\right) (\Dbb(\varepsilon) \setminus C^\pm)\ll 1,$
such that at any point $z^\pm \in C^\pm$, we have
\begin{align*}
    \liminf_{r\to 0}\frac{1}{\log r}\log\left( \sup_{d(z^\pm, z)\leq r,\, z\in \Dbb(\varepsilon)}G^\pm_{0} \circ \rho_0^{u/s}(z^{q/p})\right) = \frac{\log d}{\pm \chi_0^{u/s}}.
\end{align*}
\end{lem}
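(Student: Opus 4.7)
The plan is to treat the claim as a statement on the pointwise lower Hölder exponent, at $\phi_q^*\mu^+_0$-typical points, of the continuous subharmonic function $\varphi^+\coloneqq G^+_0\circ\rho^u_0\circ\phi_q$ on $\Dbb(\varepsilon)$. Since $\phi_q$ is a local biholomorphism away from $0$, the claim reduces to computing this Hölder exponent and showing it equals $\log d/\chi^u_0$. The corresponding statement for $G^-_0$ and $\rho^s_0$ follows by applying the same argument to $f_{t_0}^{-1}$.

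The strategy follows the scheme of \cite[Sect. 3]{DFManinMumford2017}: the functional equation $G^+_0\circ f_{t_0}=d\cdot G^+_0$ gives the numerator $\log d$, and the unstable expansion rate along typical orbits produces the denominator $\chi^u_0$. The only non-local ingredient in loc.~cit. is \cite[Lemma 3.3]{DFManinMumford2017}, where an algebraic curve carrying positive $T^+$-mass is used to trade the Lelong number of the slice against the expansion of $f_{t_0}$. I propose to replace it by a Pesin-box construction. Fix $\eta>0$. By \cite{BLS93}, the measure of maximal entropy $\mu_{f_{t_0}}$ is hyperbolic with Lyapunov exponents $\chi^u_0>0>-\chi^s_0$, and $T^+=\ddc G^+_0$ is laminated by Pesin unstable manifolds on a set of full $\mu_{f_{t_0}}$-measure, with transversal measure given by the slicing of $T^-$. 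Choose a Pesin box $\Pi\subset \C^2$ of $\mu_{f_{t_0}}$-mass $\geq 1-\eta$ on which local unstable manifolds have uniform size, the convergence $\tfrac{1}{n}\log\lVert Df_{t_0}^n|_{E^u(p)}\rVert\to\chi^u_0$ is uniform in $p$, and stable holonomy between local unstable manifolds is quasi-isometric with uniform constants. Since $\phi_q^*\mu^+_0$ is the slice of $T^+$ along $\rho^u_0\circ\phi_q$, stable holonomy yields a measurable identification, with bounded Jacobian, between the subset of $\Dbb(\varepsilon)$ mapped by $\rho^u_0\circ\phi_q$ into $\Pi$ and Pesin-typical unstable discs. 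Take $C^+$ to be this subset; by construction $\phi_q^*\mu^+_0\bigl(\Dbb(\varepsilon)\setminus C^+\bigr)\leq c\eta$ with $c$ independent of $\eta$, so this quantity can be made arbitrarily small.

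On $C^+$ the remaining computation is purely local. For $z^+\in C^+$ with image $p\in\Pi$, Pesin regularity yields $\bigl|f_{t_0}^n(y)-f_{t_0}^n(p)\bigr|\asymp e^{n\chi^u_0+o(n)}|y-p|$ as long as the iterates stay in a fixed bounded set; combining this with $G^+_0\circ f_{t_0}^n=d^n G^+_0$ and the boundedness of $G^+_0$ on bounded sets produces matching upper and lower bounds on $\sup_{B(z^+,r)}\varphi^+$, exactly as in \cite[Sect.~3]{DFManinMumford2017}. The upper bound on $\varphi^+$ near $z^+$ gives $\liminf\geq \log d/\chi^u_0$; the lower bound, obtained from a submean-value inequality once one knows $\phi_q^*\mu^+_0\bigl(B(z^+,r)\bigr)\gtrsim r^{\log d/\chi^u_0+o(1)}$ via a Brin--Katok estimate inside the Pesin box, gives $\liminf\leq \log d/\chi^u_0$. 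The principal difficulty is the stable-holonomy step: one must verify that the Hölder regularity of $G^+_0$ along the specific unstable manifold $W^u_{\mathrm{loc}}(\sigma(t_0))$ coincides, on a subset of almost-full slice-mass, with that along $\mu_{f_{t_0}}$-typical Pesin unstable manifolds. Losing an arbitrarily small portion of the slice-mass in this identification is exactly the flexibility built into the statement $\phi_q^*\mu^+_0\bigl(\Dbb(\varepsilon)\setminus C^+\bigr)\ll 1$.
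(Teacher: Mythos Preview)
Your proposal is correct and follows essentially the same route as the paper: replace the algebraic-curve input \cite[Lemma~3.3]{DFManinMumford2017} by a Pesin-box construction from \cite[Sect.~4]{BLS93}, use stable holonomy to identify a set of almost-full slice-mass on $W^u_{\mathrm{loc}}(\sigma(t_0))$ with $\mu_{f_{t_0}}$-typical points, then feed this into the local computation of \cite[Proposition~3.1]{DFManinMumford2017}, and finally note that $\phi_q$ is a local biholomorphism away from the origin. The paper's write-up is terser but the ingredients and the logical structure are the same.
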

\begin{proof}[Proof of Lemma \ref{saddleparameterslemma2}]
By symmetry, it suffices to the unstable case.
Let $E$ be the set of regular points of $\mu_{f_{t_0}}$ given by Pesin's theory (see, e.g., the beginning of \cite[Section 3.2]{DFManinMumford2017}). Let $A\subset E$ be of full $\mu_{f_{0}}$-measure.
Shrinking $\varepsilon,$ there exists $\overline{A}\subset \Dbb(\varepsilon)$ such that $\mu_0^+(\Dbb(\varepsilon) \setminus \overline{A}) \ll 1$ and for any $\overline{z} \in \overline{A}$, there exists $z\in A$ such that $\rho^u_0(\overline{z}) \in W^s_{\mathrm{loc}}(z)$ and $W^s_{\mathrm{loc}}(z)$ intersects $W^u_{\mathrm{loc}}(\sigma_0(t_0))$ transversely at $\rho^u_0(\overline{z})$ (see \cite[Sect. 4]{BLS93}). Now applying \cite[Proposition 3.1]{DFManinMumford2017} by replacing \cite[Lemma 3.3]{DFManinMumford2017} by the above property, we deduce the existence of $B\subset \Dbb(\varepsilon)$ with $\mu^+_0(\Dbb(\varepsilon)\setminus B)\ll 1$, such that at any point $z \in B$, the lower H\"older exponent of continuity of $G^+_{0} \circ \rho_0^{u}$ at $p$ are $\log d / \chi_0^u$. The lemma is then proved by remarking that $\phi_q \colon \mapsto t^q$ is a local isomorphism except at the origin. 
\end{proof}

\textbf{Step 4: Show that $|\Jac(f_{0})|=1$.}
It will imply $|u(0)|=|s(0)|^{-1}$. Since $|\lambda_u|=|\lambda_s^{-1}|$, we have $p=q$, and finally that $\Jac(f_{0}) = \lambda_u^p\lambda_s^p = e^{i\theta \pi p}$ is a root of unity.

We follow \cite[Sect. 3.1]{DFManinMumford2017}
Recall \cite[Sect. 3]{BS3}) that, we have $|\Jac(f_{0})| = \exp (\chi^u +\chi^s)$.
If $\chi_0^{u/s} = \pm \log d$, then $|\Jac(f_{0})|=1$. Suppose $\chi_0^s < -\log d.$ By \eqref{Proportional saddle}, $C^+ \cap C^- \neq \emptyset.$ Choose any point $z^0\in C^+ \cap C^-.$ Pick $z_n \in \Dbb(\varepsilon)$ such that $l_n\coloneqq|z_n - z^0| \to 0$ and $\log (G^- \circ \rho_0^s (z_n^p))/\log l_n \to \log d/(-\chi_0^s).$ Since $\Tilde{H}(z^0)=0$ and $\Tilde{H}$ is smooth, we have $\Tilde{H}(z_n) = O(l_n).$ Let $\varepsilon'$ be small enough so that $(1+\varepsilon') \log d/ (-\chi_0^s) < 1$. There exists a constant $c>0$ such that
\begin{align*}
    \gamma G_{0}^-\circ \rho^s_0(z_n^p) + \Tilde{H}(z_n) \geq \gamma l_n^{(1+\varepsilon') \log d/(- \chi_0^s}) + O(l_n) \geq c \, l_n^{(1+\epsilon') \log d/ -\chi_0^s}.
\end{align*}
On the other hand, we have
\begin{align*}
    \liminf_{n\to +\infty} \frac{\log G_{0}^+ \circ \rho^u_0(z^q_n)}{\log l_n} \geq \liminf_{n\to +\infty} \frac{1}{\log l_n} \log \left(\sup_{|z^0-z|\leq l_n}  G_{0}^+ \circ \rho^u_0(z^q) \right) \geq \log d/ \chi_0^u .
\end{align*}
Thus using \eqref{Proportional saddle} to combine the above two chains of inequalities and letting $\varepsilon' \to 0$, we obtain $\log d/ (- \chi_0^s) \geq \log d/ \chi_o^u$.
Now since $\frac{\log d}{ \chi_0^u} < 1$, the same argument in the stable direction gives the opposite inequality and finally  $ - \chi_0^s = \chi_0^u$, which implies that $|\Jac(f_{0})| = 1.$
\end{proof}

\subsection{Non-saddle parameters.}
In this subsection we show that except for the neural ones, other types of multipliers are not possible.
\subsubsection{Semi-repelling/semi-attracting parameters}
\begin{prop}\label{propsemi}
    Let $f_t$ be a holomorphic family of regular plane polynomial automorphisms of degree $d\geq 2$ parameterized by a Riemann surface $\Lambda$ and $\sigma\colon\Lambda \to \C^2$ a marked point. Suppose $\mu_{f,\sigma}^+\neq 0$ is proportional to $\mu^-_{f,\sigma}$. Then there exists no parameter $t_0$ such that $\sigma(t_0)$ is semi-repelling or semi-attracting.
\end{prop}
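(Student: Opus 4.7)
The plan is to adapt the strategy of Proposition~\ref{saddle parameters}, replacing the saddle renormalization Lemma~\ref{renormalization saddle} by its semi-repelling analogue Lemma~\ref{renormalization semi}. Assume first that $\sigma(t_0)$ is semi-repelling. Replacing $f$ by a suitable iterate (which preserves the proportionality \eqref{proportional}, since the Green functions of $f$ and $f^k$ coincide), we may assume $\sigma(t_0)$ is a fixed point of $f_{t_0}$. Because $|u(t_0)|>1$, the strong unstable manifold of $\sigma(t_0)$ varies holomorphically in $t$ by the stable manifold theorem; straightening it and diagonalizing the tangent map at the origin yields holomorphic coordinates $(t,x,y)$ on a neighborhood of $(t_0,\sigma(t_0))$ in which $\sigma(t_0)$ is the origin and the strong unstable manifold is $\{y=0\}$. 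Invariance of $\{y=0\}$ then forces $f_t(x,0)=(u(t)x,0)$, so $f_t$ takes exactly the form
\[
    f_t(x,y)=\bigl(u(t)x+y\tilde u_t(x,y),\ s(t)y+y\tilde s_t(x,y)\bigr)
\]
required by Lemma~\ref{renormalization semi}, with $|s(t_0)|=1$ and $|u(t_0)|>1$.

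Write $\sigma(t)=(a(t),b(t))$ in these coordinates and set $r_n(t)=t/\lambda_u^n$. Lemma~\ref{renormalization semi} produces a non-constant holomorphic function $h$ on $\Dbb(\varepsilon)$ with $h(0)=0$ such that $\lim_n f^n_{r_n(t)}(\sigma(r_n(t)))=(h(t),0)$ and $\lim_n f^{-n}_{r_n(t)}(\sigma(r_n(t)))=(0,0)$, with uniform convergence in $t$. Combining the functional equation $G^\pm_t\circ f_t^{\pm n}=d^n G^\pm_t$ with the continuity of $(t,z)\mapsto G^\pm_t(z)$ yields
\[
    \lim_{n\to+\infty}d^n G^+_{r_n(t)}(\sigma(r_n(t)))=G^+_{t_0}(h(t),0),\qquad \lim_{n\to+\infty}d^n G^-_{r_n(t)}(\sigma(r_n(t)))=G^-_{t_0}(0,0)=0,
\]
the last equality because $(0,0)$ is a fixed point of $f_{t_0}$. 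Multiplying \eqref{proportional} by $d^n$, evaluating at $r_n(t)$, and passing to the limit gives
\[
    G^+_{t_0}(h(t),0)=\lim_{n\to+\infty}d^n H(r_n(t)),
\]
whose right-hand side is harmonic in $t$ as a locally uniform limit of harmonic functions.

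To reach a contradiction, consider $\phi(x)\coloneqq G^+_{t_0}(x,0)$. This is a non-negative continuous subharmonic function on a neighborhood of $0$ with $\phi(0)=0$ and satisfying the scaling $\phi(u(t_0)x)=d\,\phi(x)$. Since $|u(t_0)|>1$, the forward iterates $f_{t_0}^n(x,0)=(u(t_0)^n x,0)$ of any $(x,0)$ with $x\neq 0$ escape to infinity, so $\phi(x)>0$ for every $x\neq 0$ in a punctured neighborhood of the origin. A non-negative subharmonic function that vanishes at an interior point while being strictly positive on a punctured neighborhood cannot be harmonic there, by the minimum principle; equivalently, $\Delta\phi\neq 0$ in any neighborhood of $0$. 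Pulling back by the non-constant holomorphic map $h$ preserves the non-vanishing of the Laplacian, so $t\mapsto G^+_{t_0}(h(t),0)$ is subharmonic but not harmonic, contradicting the preceding display.

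Finally, the semi-attracting case reduces to the semi-repelling one by applying the above argument to the inverse family $f_t^{-1}$: since $G^\pm_{f^{-1}}=G^\mp_f$, the Green measures $\mu^\pm_{f^{-1},\sigma}$ remain non-vanishing, and \eqref{proportional} rewrites as
\[
    G^+_{f^{-1},t}\circ\sigma=\gamma^{-1}\,G^-_{f^{-1},t}\circ\sigma-\gamma^{-1}H,
\]
with $\gamma^{-1}>0$ and $-\gamma^{-1}H$ harmonic; the multipliers at $\sigma(t_0)$ for $f_{t_0}^{-1}$ are $u(t_0)^{-1}$ and $s(t_0)^{-1}$, so $|s(t_0)^{-1}|>1=|u(t_0)^{-1}|$ and $\sigma(t_0)$ is semi-repelling for $f^{-1}$. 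The main obstacle is checking that Lemma~\ref{renormalization semi} applies, which requires putting $f_t$ into the correct normal form via the holomorphic strong unstable manifold; once this is achieved the non-harmonicity of $\phi$ comes for free from the linear expanding dynamics on $\{y=0\}$, in contrast to the saddle case where Pesin-theoretic estimates on Hölder exponents were required.
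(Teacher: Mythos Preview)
Your approach is essentially the paper's: apply the semi-repelling renormalization (Lemma~\ref{renormalization semi}) to force $G^+_{t_0}$ restricted to the local unstable manifold to be harmonic, then derive a contradiction. Your minimum-principle justification that $\phi(x)=G^+_{t_0}(x,0)$ cannot be harmonic near $0$ is more explicit than the paper's terse ``whence the contradiction,'' and your reduction of the semi-attracting case to the semi-repelling one via $f\mapsto f^{-1}$ matches the paper's ``by symmetry.''

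There is, however, a genuine gap. To put $f_t$ into the local normal form required by Lemma~\ref{renormalization semi} (origin fixed for \emph{all} nearby $t$, strong unstable manifold equal to $\{y=0\}$), you need a holomorphic continuation $t\mapsto p(t)$ of the fixed point $\sigma(t_0)$ to nearby parameters. You write that ``the strong unstable manifold of $\sigma(t_0)$ varies holomorphically in $t$,'' but this sentence already presupposes such a continuation. The implicit function theorem yields it only when $D_{\sigma(t_0)}f_{t_0}-\mathrm{Id}$ is invertible, i.e.\ when the neutral multiplier satisfies $s(t_0)\neq 1$; since $|s(t_0)|=1$, the case $s(t_0)=1$ is a genuine possibility you have not excluded. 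The paper handles it by passing to an irreducible component $\Lambda'$ of the fixed-point variety $\{(t,z)\in\Lambda\times\C^2:f_t(z)=z\}$ through $(t_0,\sigma(t_0))$, normalizing, and pulling the family back along $\Lambda'\to\Lambda$; the tautological section over $\Lambda'$ then furnishes the required continuation. A secondary imprecision: ``invariance of $\{y=0\}$'' together with ``diagonalizing the tangent map'' only gives $f_t(x,0)=(u(t)x+O(x^2),0)$; to obtain the exact form $f_t(x,0)=(u(t)x,0)$ demanded by Lemma~\ref{renormalization semi} one must in addition apply the holomorphic Koenigs linearization along the expanding direction $\{y=0\}$.
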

\begin{proof}
Suppose that $\sigma(t_0)$ is a semi-repelling fixed point. If the central multiplier is 1,  the implicit function theorem is no longer applicable for tracking this point. To address this, we can take an irreducible component $\Lambda'$ (and normalize it) of $\{(t,z)\in \Lambda \times \C^2 \ |\ f_t(z)=z \}$ which contains $\sigma(t_0)$. We then pull back the family $f_t$ via the base change $\Lambda' \to \Lambda$. Note that the base change preserves multipliers, allowing us to still assume that there is a local analytic continuation of the semi-repelling fixed point $\sigma(t_0)$. The following lemma holds, similarly to the saddle parameter case Lemma \ref{saddleparameterslemma}.
\begin{lem}[Renormalization of Green functions: semi-repelling case]\label{semiparameterslemma}
    There exist a positive integer $D \geq 1$, a complex number $\lambda$ outside the closed unit disk and a parameterization $\rho^u_0\colon \Dbb(\varepsilon) \to W^u_{\mathrm{loc}}(\sigma(t_0))$, such that, up to taking an iterate of $f_t,$ the following is true. Define $r_n(t) \coloneqq \frac{t}{\lambda^n}$, there exists a non constant holomorphic function $h\colon\Dbb(\varepsilon) \to \C$ such that
    \begin{align*}
    G^+_{0} \circ \rho^u_0(h(t)) &= \lim_{n\to +\infty} G^+_{r_n(t)} (f^n_{r_n(t)}(\sigma(r_n(t))))= \lim_{n\to +\infty} d^n G^+_{r_n(t)} (\sigma(r_n(t))).
    \end{align*}
    and $ \lim_{n\to + \infty}G^-_{r_n(t)}(f^{-n}_{r_n(t)}(\sigma(r_n(t))))=0$, and the convergence is uniform.
\end{lem}

By Lemma \ref{semiparameterslemma} and \eqref{proportional},
\begin{align*}
    G^+_{0} \circ \rho^u_0(h(t)) &=\lim_{n\to \infty}\left( \gamma G^-_{r_n(t)}(f^{-n}_{r_n(t)}\sigma(r_n(t))) + d^nH(r_n(t))  \right) =\lim_{n\to \infty} d^nH(r_n(t)).
\end{align*}
is harmonic,which is a contradiction. By symmetry, the semi-attracting case is not possible either.
\end{proof}

\subsubsection{Repelling/attracting parameters}
\begin{prop}\label{proprepelling}
    Let $f_t$ be a holomorphic family of regular plane polynomial automorphisms of degree $d\geq 2$ parameterized by a Riemann surface $\Lambda$ and $\sigma\colon\Lambda \to \C^2$ a marked point. Suppose $\mu_{f,\sigma}^+\neq 0$ is proportional to $\mu^-_{f,\sigma}$. Then there exists no parameter $t_0$ such that $\sigma(t_0)$ is repelling or attracting.
\end{prop}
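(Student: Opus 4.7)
The plan is to mimic the strategy of Proposition~\ref{propsemi}, by combining a local vanishing property for one of the Green functions with a renormalization argument for the other. The attracting case reduces to the repelling case upon applying the argument to $f^{-1}$: this interchanges $G^+$ and $G^-$, turns an attracting fixed point of $f$ into a repelling fixed point of $f^{-1}$, and preserves the proportionality hypothesis up to replacing $\gamma$ by $1/\gamma$. So I focus on the repelling case.

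After replacing $f$ by a suitable iterate, I may assume $\sigma(t_0)$ is a repelling fixed point of $f_{t_0}$, with both eigenvalues of modulus strictly greater than $1$. The key new observation, absent in the saddle and semi-repelling settings, is that $\sigma(t_0)$ lies in the \emph{interior} of the backward filled Julia set $K^-_{t_0}$: indeed $f_{t_0}^{-1}$ is locally attracting at $\sigma(t_0)$, so there is an open neighborhood $V\subset\C^2$ of $\sigma(t_0)$ contained in $K^-_{t_0}$, on which $G^-_{t_0}$ vanishes identically. By the implicit function theorem the repelling fixed point continues as a holomorphic map $\tilde\sigma(t)$ for $t$ in a neighborhood $\Omega\subset\Lambda$ of $t_0$, still repelling, and a standard continuity argument for the holomorphic family $f_t$ produces uniform basins $V_t\subset K^-_t$ around $\tilde\sigma(t)$. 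Since $\sigma$ is holomorphic and $\sigma(t_0)=\tilde\sigma(t_0)$, after shrinking $\Omega$ I get $\sigma(t)\in V_t$, hence
\[
G^-_t\circ\sigma(t)\equiv 0\qquad\text{for all } t\in\Omega.
\]

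Next I would establish a renormalization statement for $G^+$ in the fully repelling regime, analogous to Lemma~\ref{saddleparameterslemma}. Using a holomorphic normal form for $f_{t_0}$ at $\sigma(t_0)$, I would pick the eigendirection whose eigenvalue has larger modulus, parameterize the associated local $f_{t_0}$-invariant analytic curve by $\rho^u_0\colon\Dbb(\varepsilon)\to\C^2$, and, setting $r_n(t)=t/\lambda^n$ for a suitable root $\lambda$ of this eigenvalue, prove as in Lemma~\ref{renormalization saddle} that there is a non-constant holomorphic function $\phi\colon\Dbb(\varepsilon)\to\C$ with
\[
G^+_{t_0}\circ\rho^u_0(\phi(t))\;=\;\lim_{n\to\infty} d^n\,G^+_{r_n(t)}\bigl(\sigma(r_n(t))\bigr)
\]
uniformly in $t$. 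The left-hand side is subharmonic and non-constant: at a repelling fixed point $K^+_{t_0}$ is locally reduced to $\{\sigma(t_0)\}$, so $G^+_{t_0}$ is strictly positive on the punctured invariant curve.

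To conclude, I multiply~\eqref{proportional} by $d^n$ and evaluate at $r_n(t)$; for $n$ large the parameter $r_n(t)$ lies in $\Omega$, so the term $d^n\gamma G^-_{r_n(t)}(\sigma(r_n(t)))$ vanishes by the first step. Passing to the limit,
\[
G^+_{t_0}\circ\rho^u_0(\phi(t))\;=\;\lim_{n\to\infty} d^n H(r_n(t)),
\]
which is harmonic in $t$ as a uniform limit of harmonic functions. This contradicts the non-constancy of the left-hand side. The main technical obstacle I anticipate is exactly the adaptation of Lemma~\ref{renormalization saddle} to the doubly-expanding regime: the convergence estimates for the transverse coordinate $b_n(t)$ in Section~\ref{sectionrenormalization} exploit the contracting eigendirection, so in the present case one must calibrate $\lambda$ as a root of the eigenvalue of \emph{largest} modulus (and possibly pass to a further iterate of $f$ to balance the exponents $p$ and $q$ coming from the marked point) in order to guarantee that the rescaled forward orbits converge to the chosen invariant curve.
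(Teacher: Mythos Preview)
Your first step is correct and is essentially the paper's key observation: since $\sigma(t_0)$ is an attracting fixed point for $f_{t_0}^{-1}$, the backward orbits $f_t^{-n}(\sigma(t))$ converge locally uniformly to the continuation of the fixed point, hence form a normal family near $t_0$, so $t_0\notin\supp(\mu^-_{f,\sigma})$ (equivalently, $G^-_t\circ\sigma\equiv 0$ on a neighborhood $\Omega$).

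From here, however, you take an unnecessarily heavy route. The paper finishes in one line: since $|u(0)|>1$ along at least one eigendirection, the family $t\mapsto f_t^n(\sigma(t))$ cannot be normal at $t_0$, so $t_0\in\supp(\mu^+_{f,\sigma})$. Proportionality of the two nonzero measures then immediately gives a contradiction. No renormalization lemma, no invariant-curve analysis, no claim about the local structure of $K^+_{t_0}$ is needed. In fact your own first step already yields $\mu^+_{f,\sigma}|_\Omega=\gamma^{-1}\mu^-_{f,\sigma}|_\Omega=0$ directly from~\eqref{proportional}, so all that remains is to exhibit $t_0\in\supp(\mu^+_{f,\sigma})$.

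As written, your proposal has a genuine gap: the renormalization lemma you invoke in the doubly-expanding regime is neither proved in the paper nor in your sketch, and you yourself flag the difficulty (the estimates of Section~\ref{sectionrenormalization} rely on a non-expanding transverse direction). Two of your auxiliary claims are also not justified: the existence of an $f_{t_0}$-invariant analytic curve tangent to the chosen eigendirection can fail under resonance, and the assertion that $K^+_{t_0}$ is locally reduced to $\{\sigma(t_0)\}$ at a repelling fixed point is not obvious (points pushed out of a small ball by forward iteration need not escape to infinity). None of this machinery is needed: replace your second step by the one-line normality argument above.
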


\begin{proof}
Suppose by contradiction $\sigma(t_0)$ is a repelling fixed point. By symmetry, it suffices to consider this case. We argue locally (see the proof of Lemma \ref{saddleparameterslemma}) and assume $f_t$ can be expressed as:
\begin{align*}
    f_t(x,y) &= ( u(t)x, \Tilde{s}_t(x,y)).
\end{align*}
where $u(t)$ is an eigenvalue of $D_{\sigma(t)}f_t$. Since $|u(0)|>1$, the family $f^n_t(\sigma(t))$ can not be normal, implying that $t_0$ lies in the support of the forward Green measure. However, $t_0$ is an attracting fixed parameter for $f_t^{-1}$, thus locally uniformly $f^{-n}_t(\sigma(t))$ converge to $\sigma(t)$. Hence the family~$f^{-n}_t(\sigma(t))$ is normal at $t_0$, and $t_0$ does not belong to the support of the backward Green measure. Since these two measures are proportional, this leads to a contradiction.
\end{proof}

\section{Non-degenerate families of curves}\label{sectioncurves}
\subsection{Notations}\label{notations}
Let $\Lambda$ be a smooth quasi-projective curve with a smooth compactification $\B.$ Let~$f_t$ be an algebraic family of regular plane polynomial automorphisms of degree $d\geq 2$ parameterized by $\Lambda$. Define $\Fscr_n \colon \Lambda \times \A^2 \to \Lambda \times \A^2 \times \A^2$ by $\Fscr_n(t,z)=(t,f_t^n(z),f^{-n}_t(z))$.
It can be extended to a regular morphism $\Lambda \times \P^2 \to \Lambda \times \P^4$ that will still be denoted by $\Fscr_n$ , see~\cite[Lemma 6.1]{Lee2013}. Let $\Cscr\subset \Lambda\times \A^2$ be a family of curves parameterized by $\Lambda$. For any integer $n\geq 1,$ denote by $\Cscr_n$ the Zariski closure of $\Fscr_n(\Cscr)$ in $\B\times \P^4$.
Denote by $\pi$ (resp. $\pi'$) the projection $\B\times \P^2 \to \B$ (resp. $\B\times \P^4 \to \B$). The two projections $\Lambda\times \A^2\times\A^2 \to \Lambda\times \A^2$ will be denote by $p$ and $q$. The line bundle $\L$ (resp. $\L'$) is defined to be the pullback of $\O_{\P^2}(1)$ (resp. $\O_{\P^4}(1)$) by the projection $\B\times \P^4 \to \P^2$ (resp. $\B\times \P^4 \to \P^4$).

Recall that we use the additive notation for the tensor product of line bundles. If we have a morphism $ X\to B$ and a subvariety $Y\subset X,$ the subvariety $Y^{[2]}$ is defined to be the variety $Y \times_B Y$. If we have a line bundle $L$ on $X$ and we denote by $q_1,q_2 \colon Y^{[2]} \to Y$ the two projections, then the line bundle $L^{[2]}$ is defined to be $q_1^*L+q_2^*L$. If $h\colon Y \to \R$ is any function, then $h^{[2]}\coloneqq h\circ q_1 + h\circ q_2$. 

\subsection{Geometric canonical height functions}
Recall that $G_f\coloneqq \max\{G_f^+, G_f^-\}$ and the Green measure of $\Cscr$ is $ \mu_{f,\Cscr}\coloneqq \left( \ddc G_f \right)^2 \wedge [\Cscr]$.
We define the \emph{(geometric canonical) height} $\Tilde{h}_f(\Cscr)$ of $\Cscr$ (relative to $f$) to be the mass of the Green measure $\mu_{f,\Cscr}$
\begin{align}\label{DefHeightCurve}
    \Tilde{h}_f(\Cscr)\coloneqq\int_{\Lambda \times \C^2} \mu_{f,\Cscr}.
\end{align}
We say that $\Cscr$ is a \emph{non-degenerate} if its height is non-vanishing.
We can reinterpret the canonical height as the limit of the following intersection numbers.
\begin{prop}\label{hieghtIntersection}
    We have $\Tilde{h}_f(\Cscr) = \lim_n \frac{1}{d^{2n}} \Cscr_n \cdot {\L'}^2.$
\end{prop}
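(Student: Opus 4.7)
The plan is to express $\Cscr_n\cdot{\L'}^2$ as a Monge--Amp\`ere integral whose potential is an explicit pullback of the Fubini--Study potential, and then pass to the limit using the Bedford--Taylor continuity theorem.

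First, choose the standard Fubini--Study form $\omega_{FS}\in c_1(\L')$, which on the affine chart $\{x_0\neq 0\}\simeq \A^4$ of $\P^4$ admits the continuous psh potential $g_{FS}(w_1,\dots,w_4)=\tfrac{1}{2}\log(1+\sum_i|w_i|^2)$. Because each $f_t^{\pm n}$ is an automorphism of $\A^2$, the morphism $\Fscr_n$ is an isomorphism from $\Lambda\times\A^2$ onto its image, so its restriction to $\Cscr$ is a birational morphism onto $\Cscr_n$. Setting
\[
g_n(t,z)\;:=\;\Fscr_n^*g_{FS}(t,z)\;=\;\tfrac{1}{2}\log\bigl(1+\lVert f_t^n(z)\rVert^2+\lVert f_t^{-n}(z)\rVert^2\bigr),
\]
the projection formula yields
\[
\Cscr_n\cdot{\L'}^2\;=\;\int_{\Cscr}(\ddc g_n)^2\;+\;\varepsilon_n,
\]
where $\varepsilon_n$ collects the contributions from $\Cscr_n\setminus \Fscr_n(\Cscr)$, that is, from the fibers over $\B\setminus\Lambda$ and from any component arising from the indeterminacy of $\Fscr_n$ at the line at infinity of $\P^2$.

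Next, I would prove the uniform convergence $d^{-n}g_n\to G_f$ on compact subsets of $\Lambda\times\A^2$. This follows from the elementary two-sided bound
\[
\max\bigl(0,\log\lVert f_t^n(z)\rVert,\log\lVert f_t^{-n}(z)\rVert\bigr)\;\le\;g_n(t,z)\;\le\;\log\sqrt{3}+\max\bigl(0,\log\lVert f_t^n(z)\rVert,\log\lVert f_t^{-n}(z)\rVert\bigr),
\]
combined with the classical uniform convergence $d^{-n}\log^+\lVert f_t^{\pm n}(z)\rVert\to G_f^\pm$ on compacta (cf.~\cite{BS1,SibonyPanorama}). Since $d^{-n}g_n$ and $G_f$ are continuous and plurisubharmonic, the Bedford--Taylor continuity theorem gives $\bigl(\ddc(d^{-n}g_n)\bigr)^2\to(\ddc G_f)^2$ weakly on $\Lambda\times\A^2$. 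Integrating against the positive closed current $[\Cscr]$ then produces
\[
\frac{1}{d^{2n}}\int_{\Cscr}(\ddc g_n)^2\;\longrightarrow\;\int_{\Cscr}(\ddc G_f)^2\wedge[\Cscr]\;=\;\Tilde{h}_f(\Cscr).
\]

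The main obstacle will be showing that the error term satisfies $\varepsilon_n/d^{2n}\to 0$. This reduces to two geometric estimates: first, the fibers of $\Cscr_n\to\B$ above the finite set $\B\setminus\Lambda$ contribute a ${\L'}^2$-degree that grows at most like $d^n$, since they are cut out by a single hyperplane class in a fiber of bounded dimension; second, the components of $\Cscr_n$ arising from the indeterminacy of $\Fscr_n$ at the line at infinity of $\P^2$ are contracted by $\Fscr_n$ to low-dimensional subvarieties of $\P^4$ whose ${\L'}^2$-degree is sub-dominant to $d^{2n}$. Both estimates should follow from the regularity of $f_t$ at infinity, namely the fact that each $f_t^n$ contracts the line at infinity of $\P^2$ to a single super-attracting indeterminacy point (see \cite[Lemma 6.1]{Lee2013}), together with functoriality of pullbacks under a resolution of the indeterminacies of the rational extension of $\Fscr_n$ to $\B\times\P^2$.
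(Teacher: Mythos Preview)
The overall shape of your argument---express the intersection number as a Monge--Amp\`ere integral on $\Cscr$ and pass to the limit via Bedford--Taylor---is the same as the paper's. But you have located the difficulty in the wrong place, and the step you treat as routine is exactly where the work lies.

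First, a simplification: with your smooth Fubini--Study representative $\omega'$, the error term $\varepsilon_n$ is simply zero. The complement $\Cscr_n\setminus\Fscr_n(\Cscr)$ is a curve inside the projective surface $\Cscr_n$, hence of measure zero for the smooth measure $\omega'^2|_{\Cscr_n}$; since $\Fscr_n$ is an isomorphism from $\Cscr$ onto its image, one has $\Cscr_n\cdot{\L'}^2=\int_\Cscr(\ddc g_n)^2$ exactly. Your last paragraph is therefore unnecessary.

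The genuine gap is the sentence ``Integrating against the positive closed current $[\Cscr]$ then produces\dots''. Bedford--Taylor gives weak convergence of $\bigl(\ddc(d^{-n}g_n)\bigr)^2\wedge[\Cscr]$ only against test functions with compact support in $\Cscr$, and $\Cscr$ is non-compact both in the fiber variable (each $\Cscr_t$ is unbounded in $\A^2$) and in the base variable ($\Lambda$ is quasi-projective). Weak convergence on an open set does not give convergence of total mass: mass can escape to infinity, and here the total mass of $d^{-2n}(\ddc g_n)^2\wedge[\Cscr]$ is precisely the quantity $d^{-2n}\Cscr_n\cdot{\L'}^2$ whose limit you are trying to identify, so this step is circular without further input.

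The paper supplies that input in two pieces. For the fiber direction it uses the $\max$-norm potential instead of the smooth one, so that the difference $\varphi_n=G_n-G_f$ extends continuously across the line at infinity of $\P^2$; one can then work with the trivial extension $T_f$ of $\ddc G_f$ on $\Lambda\times\P^2$. For the base direction it proves the quantitative estimate
\[
|\varphi_n(t,z)|\;\le\;\frac{C}{d^n}\bigl(\log^+|t|+1\bigr)
\]
uniformly on $\Lambda\times\P^2$ (this is nontrivial and is obtained from the degeneration analysis in \cite{irokawa2023hybrid}). The logarithmic growth in $t$ is then paired, after an integration by parts, against a Dinh--Sibony DSH cutoff $\Psi_r$ whose $\ddc$ has mass bounded independently of $r$; combined with a B\'ezout bound on the masses of the currents involved, this shows the contribution outside any compact set is $O(d^{-n})$ uniformly in $r$. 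Letting $r\to\infty$ and then $n\to\infty$ finishes the proof. Neither of these two ingredients appears in your proposal, and they---not the boundary-component estimate you sketch---are what prevents mass from escaping and makes the limit go through.
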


\begin{proof}
    Let $\omega'$ be the pull back by the projection $\B\times \P^4_\C \to \P^4_\C$ of the Chern form of the canonical line bundle $\O_{\P^4_\C}(1)$ endowed with the standard continuous metric (Example \ref{standradmetric}). Define the function $G_n:\Lambda\times\C^2\times\C^2 \to \R_+$ by $G_n(t,x,y)\coloneqq\frac{1}{d^n}\log^+\lVert (f_t^n(x,y),f_t^{-n}(x,y))\rVert$, where $\lVert \cdot \rVert$ is the maximum of the modulus of the coordinates, so that $\ddc G_n= \frac{1}{d^n}(\Fscr_n)^* \omega'$ on $\Lambda \times \C^2.$ Choose a very ample divisor such that we have a closed immersion $\imath\colon \B \hookrightarrow \P^N_\C$ for some integer $N\geq 1$ and $\Lambda = \imath^{-1}(\C^N)$. Denote by $T_f$ the trivial extension of $\ddc G_f$ to $\Lambda\times \P_\C^2.$ Let $\varphi_n$ be the continuous function of $\Lambda\times \P^2_\C$ such that $\varphi_n = G_n-G_f$ on $\Lambda\times \C^2$, so that $\frac{1}{d^n}(\Fscr_n)^* \omega' - T_f = dd^c \phi_n.$ We claim that there exists a constant $C>0$ such that
    \begin{align}\label{degeneration estimate}
        |\phi_n(t,z)| \leq \frac{C}{d^n}(\log^+|t| +1 )
    \end{align}
    for all $(t,z)\in \Lambda\times \P_\C^2.$ In fact, since $\B \setminus \Lambda$ is finite, we can argue locally by centering the local coordinate around each point of $\B\setminus\Lambda$ and suppose our family is holomorphic on the punctured disk. Then we apply \cite[Proposition 3.2]{irokawa2023hybrid} (the result is stated for H\'enon maps but the same proof also works for generalized H\'enon maps). 

    Following \cite{gauthier2020geometric}, we define $\Psi_r(t)\coloneqq\frac{1}{r}(\log\max(|t|,e^{2r})- \log\max(|t|,e^r))$.
    Observe that this function takes values in $[0,1]$ and equals to 1 if $|\lambda|\leq \exp(r)$ and 0 if $|\lambda| \geq  \exp(2r).$ The positive closed current $T_r \coloneqq \ddc(\log\max(|t|,e^r)))$ has finite mass independent of the radius $r>0.$ This function is in fact an example of DSH functions introduced by Dinh and Sibony (see e.g., \cite{DSLecture}). Since we have $\Cscr_n \cdot {\L'}^2=\int_{\Lambda \times \P^2_\C} \frac{1}{d^{2n}}(\Fscr_n)^* {\omega'}^2,$ it suffices to estimate the following
\begin{align*}
    I_{n,r} \coloneqq \left |\left \langle \left ( \frac{1}{d^{2n}} {(\Fscr_{n})}^{*}{\omega'}^2- T_f^2\right ) \wedge [\Cscr], \Psi_r\circ \pi \right \rangle \right | = \left|\left \langle \phi_n \left(\frac{1}{d^n}{(\Fscr_{n})}^{*}\omega' + T_f\right) \wedge [\Cscr], dd^c\left( \Psi_r \circ \pi \right ) \right \rangle \right |
\end{align*}
By B\'ezout (see, e.g., \cite{Okainequality}),
\begin{align*}
    I_{n,r}\leq \frac{C(2r+1)}{rd^n}\left(\lVert \frac{1}{d^n}{(\Fscr_{n})}^{*}{\omega'} \rVert_{\Lambda\times\P^2_\C} +\lVert T_f \rVert_{\Lambda\times\P^2_\C} \right)\lVert \Cscr \rVert_{\Lambda\times\P^2_\C} \int_{\Lambda \cap \overline{\Dbb}(0,e^{2r})} T_{2r} + T_r.
\end{align*}
By \eqref{degeneration estimate} and that the mass of $\frac{1}{d^n}{(\Fscr_{n})}^{*}{\omega'}$ is independent of $n$, there exists a constant $C'>0$ such that $I_{n,r}\leq C'/d^n$.
Letting $r\to +\infty$, $\left|\frac{1}{d^{2n}}\Cscr_n \cdot {\L'}^2 - \Tilde{h}_f(\Cscr)\right|\leq C'/d^n$. Now let $n\to +\infty$ to conclude.
 \end{proof}

\subsection{Positivity of non-degenerate families of curves}
Let us denote by $p,q$ be the two projections $\Lambda\times (\C^2)^2 \to \Lambda\times\C^2.$ If $f\colon \Lambda \times \C^2 \to \R$ is any function, then set $f^{[2]}\coloneqq f\circ p + f\circ q$. If $\Cscr$ is any family of curves, then denote by $\Cscr^{[2]}=\Cscr\times_\Lambda\Cscr.$
The aim of this subsection is to prove Proposition~\ref{positivityI_f}, which plays a crucial role in establishing a contradiction to the equidistribution theorem~\ref{equidistribution curves}.
\begin{prop}\label{positivityI_f}
    Suppose $\Cscr$ is a non-degenerate family of curves. Then there exists an integer $n$ such that
    \begin{align*}
        I_{f,f^n(\Cscr)}\coloneqq \int_{\Lambda\times \C^4} G_{f^n}^{[2]} \left( \ddc G_{f^n}^{[2]} \right)^3 \wedge [f^n(\Cscr)^{[2]}]>0.
    \end{align*}
\end{prop}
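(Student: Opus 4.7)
The plan is to pull the integral back to $\Cscr^{[2]}$ via the biholomorphism $F^n := f^n \times_\Lambda f^n \colon \Cscr^{[2]} \to f^n(\Cscr)^{[2]}$, expand the cube $(\ddc G^{[2]})^3$ multinomially, and extract a leading positive contribution as $n \to \infty$ controlled by $\Tilde{h}_f(\Cscr) > 0$.

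First I would observe that the Green functions stabilize under iteration: $G_{f^n}^{\pm} = G_f^{\pm}$, hence $G_{f^n} = G_f =: G$. Using the biholomorphism $F^n$, the change of variables yields
\begin{align*}
I_{f, f^n(\Cscr)} \;=\; \int_{\Cscr^{[2]}} G^{(n),[2]}\,\bigl(\ddc G^{(n),[2]}\bigr)^3,
\end{align*}
where $G^{(n)} := (f^n)^* G = \max\{d^n G^+, d^{-n}G^-\}$ on $\Lambda \times \C^2$ and $G^{(n),[2]} := G^{(n)} \circ p + G^{(n)} \circ q$.

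Next I would expand the cube multinomially, $(\ddc G^{(n),[2]})^3 = \sum_{k=0}^{3} \binom{3}{k} (\ddc G^{(n)}_p)^k \wedge (\ddc G^{(n)}_q)^{3-k}$, and note that the end terms ($k=0,3$) vanish on $\Cscr^{[2]}$ by dimensional grounds: since $\Cscr$ has complex dimension $2$, a $(3,3)$-form restricts trivially to $\Cscr$, so $(\ddc G^{(n)})^3|_{\Cscr} \equiv 0$. Thus only the cross terms $3(\ddc G^{(n)}_p)^2 \wedge \ddc G^{(n)}_q + 3 \ddc G^{(n)}_p \wedge (\ddc G^{(n)}_q)^2$ survive.

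The leading-order analysis would then use the H\'enon-type fiberwise identity $(\ddc G_t)^2 = 2\, \ddc G^+_t \wedge \ddc G^-_t$ and the $f_t$-invariance of the maximal-entropy measure $\mu_{f_t}$: since $(\ddc G^{(n)})^2 = (f^n)^*(\ddc G)^2$, the current $(\ddc G^{(n)})^2 \wedge [\Cscr]$ coincides with $\mu_{f,\Cscr}$ up to controlled $\Lambda$-direction corrections. Combined with the scalings $(f^n)^*G^{\pm} = d^{\pm n} G^{\pm}$ on the pointwise factor, the integral reorganizes, after a Fubini-type decomposition over $\Lambda$, into a leading expression of the shape
\begin{align*}
c \cdot \Tilde{h}_f(\Cscr) \cdot \int_{\Cscr^{[2]}} \bigl(G^+_p + G^-_q + \cdots\bigr) \cdot q^*\ddc G \wedge \text{(positive factor)} \;+\; o(1),
\end{align*}
with $c > 0$. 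The factor $q^*\ddc G$ paired with $G^{\pm}_q$ contributes positively because $G \geq 0$ is strictly positive off the fibered filled Julia set and $\Cscr$ is not contained in that set. Non-degeneracy gives $\Tilde{h}_f(\Cscr) > 0$, and so the leading term is strictly positive; for $n$ sufficiently large it dominates any subleading contribution, giving $I_{f, f^n(\Cscr)} > 0$.

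The hard part will be the bookkeeping in the leading-term identification, because the $\max$-function $G = \max\{G^+, G^-\}$ interacts non-trivially with iteration, producing a transition region $\{d^n G^+ \approx d^{-n} G^-\}$ that must be carefully controlled. One must verify that cancellations between the various multinomial contributions do not destroy the positive mass carried by $\mu_{f,\Cscr}$; the fiberwise identity $(\ddc G_t)^2 = 2\,\ddc G^+_t \wedge \ddc G^-_t$ and the $f_t$-invariance of $\mu_{f_t}$ are precisely the H\'enon-specific tools that allow the positive term to be extracted directly, bypassing the open-set assumption on the bifurcation support required in the analogous endomorphism setting of \cite[Lemma 7.4]{gauthier2023sparsity}.
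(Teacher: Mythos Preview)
Your pullback and multinomial expansion are correct, and the vanishing of the $k=0,3$ terms is right. The paper does exactly this and arrives at the clean formula
\[
I_{f,\Cscr}\;=\;6\int_{\Lambda} G_{f,\Cscr}\,\ddc G_{f,\Cscr},\qquad G_{f,\Cscr}(t)\coloneqq\int_{\Cscr_t}G_{f_t}\,\ddc G_{f_t},
\]
using the additional vanishing $G_f\,(\ddc G_f)^2\wedge[\Cscr]=0$ (the support of $(\ddc G_{f_t})^2$ is the small Julia set, where $G_{f_t}=0$). This reduction is more precise than your ``leading expression of the shape $c\cdot\Tilde h_f(\Cscr)\cdot(\ldots)+o(1)$'', and it exposes the real issue.

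The genuine gap is your last paragraph. The integral above is \emph{not} automatically positive just because $\Tilde h_f(\Cscr)>0$ and ``$G>0$ off the filled Julia set''. What is needed is $G_{f,\Cscr}(t_0)>0$ at some $t_0\in\supp(\ddc G_{f,\Cscr})$, i.e.\ the slice measure $\ddc G_{f_{t_0}}|_{\Cscr_{t_0}}$ must charge the region $\{G_{f_{t_0}}>0\}$. That measure lives on the fiberwise locus $\{G^+=G^-\}$, so the question is whether $\Cscr_{t_0}\setminus K_{t_0}$ actually meets $\{G^+=G^-\neq0\}$. It need not: Example~\ref{degenerate curve} exhibits a family with $G_{f,\Cscr}\equiv 0$, and in the present setting nothing a priori prevents every unbounded component of $\Cscr_{t_0}\setminus K_{t_0}$ from lying entirely in $\{G^+/G^->c>0\}$ (or the reverse). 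Your scaling heuristic does not detect this obstruction: $G^{(n)}=\max\{d^nG^+,d^{-n}G^-\}$ has no clean homogeneity, and the ``$+\,o(1)$'' you write is unjustified.

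The paper's fix is geometric, not asymptotic. One shows (Claim~\ref{rangeinfty}) that on some component $\Omega_i\subset\Cscr_{t_0}\setminus K_{t_0}$ the ratio $G^+/G^-$ attains values arbitrarily close to $0$; then iterating by $f^N$ multiplies this ratio by $d^{2N}$, forcing $f^N(\Omega_i)$ to cross $\{G^+=G^-\neq0\}$. Hence $G_{f,f^N(\Cscr)}(t_0)>0$, and by continuity $I_{f,f^N(\Cscr)}>0$. The proof of Claim~\ref{rangeinfty} uses invariance of $\ddc G_{f,\Cscr}$ under replacing $\Cscr$ by $f^n(\Cscr)$ (Lemma~\ref{pushdownmeasure}) to rule out the alternative. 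This is the missing idea in your proposal: the iterate $n$ is chosen to achieve a crossing, not to make a leading term dominate.
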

Set $G_{f,\Cscr}(t)\coloneqq \int_{\Cscr_t}G_{f_t} \ddc G_{f_t}$.
\begin{lem}\label{pushdownmeasure}
    We have the equality of measures $\pi_{*} \mu_{f,\Cscr}= \ddc G_{f,\Cscr}.$
\end{lem}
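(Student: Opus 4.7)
The plan is to rewrite $\mu_{f,\Cscr}$ as a $\ddc$-exact current and then push it forward along $\pi$, exploiting the fact that pushforward commutes with $\ddc$. Since $G_f=\max\{G_f^+,G_f^-\}$ is continuous and plurisubharmonic on $\Lambda\times\C^2$, Bedford--Taylor theory gives the identity
$$(\ddc G_f)^2 = \ddc(G_f\cdot \ddc G_f),$$
which follows from $\ddc(G_f\cdot \ddc G_f)=\ddc G_f\wedge \ddc G_f$ together with the closedness of $\ddc G_f$. Since $[\Cscr]$ is itself a closed current, wedging with it preserves the identity, giving
$$\mu_{f,\Cscr} = \ddc\bigl(G_f\cdot \ddc G_f\wedge [\Cscr]\bigr).$$

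Pushforward of currents commutes with $\ddc$ (by duality, since $\ddc$ acts on currents through its action on test forms), so
$$\pi_*\mu_{f,\Cscr} = \ddc\, \pi_*\bigl(G_f\cdot \ddc G_f\wedge [\Cscr]\bigr).$$
It then remains to identify the distribution $\pi_*(G_f\cdot \ddc G_f\wedge[\Cscr])$ on $\Lambda$ with the function $G_{f,\Cscr}$. Since $\pi:\Cscr\to\Lambda$ is smooth by hypothesis, this pushforward is computed by fiber integration: working in local coordinates $(t,z)$ on $\Cscr$ adapted to $\pi$ and pairing with a smooth compactly supported top-form $\chi(t)\,dt\wedge d\bar t$ on $\Lambda$, only the $dz\wedge d\bar z$ component of $\ddc G_f$ survives after wedging with $\pi^*(\chi\, dt\wedge d\bar t)$, and this component restricted to the fiber $\Cscr_t$ equals $\ddc G_{f_t}$. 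Consequently,
$$\bigl\langle \pi_*(G_f\cdot \ddc G_f\wedge[\Cscr]),\chi\bigr\rangle = \int_\Lambda \chi(t)\left(\int_{\Cscr_t}G_{f_t}\,\ddc G_{f_t}\right)dt\wedge d\bar t,$$
which shows $\pi_*(G_f\cdot \ddc G_f\wedge[\Cscr]) = G_{f,\Cscr}$; combined with the previous display this yields the lemma.

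The only delicate step is the slicing identity for the continuous (but not smooth) psh function $G_f$ along the smooth submersion $\pi$. This is standard in Bedford--Taylor theory, but if one prefers to avoid invoking it as a black box, one can locally approximate $G_f$ by a decreasing sequence of smooth psh functions, carry out the computation in the smooth case (where it reduces to ordinary Fubini), and then pass to the limit using the continuity of the Bedford--Taylor product along such decreasing sequences of continuous psh functions.
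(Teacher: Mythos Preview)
Your formal manipulation is the same one the paper carries out, but you have skipped the one genuinely nontrivial point in the argument: the map $\pi:\Cscr\to\Lambda$ is \emph{not proper}. The fibers $\Cscr_t$ are affine curves, so neither $\pi_*(G_f\,\ddc G_f\wedge[\Cscr])$ nor the commutation $\pi_*\ddc=\ddc\pi_*$ comes for free ``by duality''. Stokes on the non-compact surface $\Cscr$ over $\mathrm{supp}(\chi)$ may produce a boundary term at infinity, and this is precisely what must be shown to vanish. Your local approximation by smooth psh functions does not help here: it justifies the slicing formula on compact pieces of $\Cscr$, but the fiber integral $\int_{\Cscr_t}G_{f_t}\,\ddc G_{f_t}$ is over a non-compact curve, and you give no reason why no mass escapes to infinity in the limit (or even why $G_{f,\Cscr}(t)$ is finite).

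The paper's proof addresses exactly this point. It compactifies to $\Lambda\times\P^2$, replaces $\ddc G_f$ by its trivial extension $T_f$, and uses the identity $T_f=\ddc G_f+C_{T_f}[\Lambda\times L_\infty]$ to isolate the boundary contribution. The integration by parts is then performed on the compact $\Lambda\times\P^2$ against the closed current $T_f\wedge[\Cscr]$, and the extra term $[\Lambda\times L_\infty]\wedge T_f\wedge[\Cscr]$ is shown to vanish via a dynamical argument: for any horizontal curve $\Y\subset\Lambda\times L_\infty$ one has $T_f\wedge[\Y]=0$, because $L_\infty$ is contracted by $f_t$ so that $d^{-n}\overline{\Fscr_n(\Y)}\cdot\L'\to0$. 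This vanishing is the actual content of the lemma; once it is established, your computation and the paper's coincide.
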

\begin{proof}
    Recall that $T_f$ is the trivial extension of $\ddc G_f$ on $\Lambda \times \P_\C^2$. It has continuous potential by construction and there exists a positive constant $C_{T_f}$ (namely, the mass of $T_f$) such that, on $\Lambda \times \P^2_\C$, we have $\ddc G_f + C_{T_f} [\Lambda\times L_\infty] = T_f,$ where $L_\infty$ denotes the line at infinity.
    Let $\varphi$ be a test function on $\Lambda$. Then we have
    \begin{align*}
        \langle\pi_*\mu_{f,\Cscr}, \varphi \rangle &=\int_{\Lambda \times \C^2} \pi^*\varphi\, \mu_{f,\Cscr} = \int_{\Lambda \times \P^2}\pi^*\varphi\, T_f^2 \wedge [\Cscr]\\
        &=\int_{\Lambda \times \P^2}\pi^*\varphi\, \left(\ddc G_f + C_{T_f} [\Lambda\times L_\infty]\right)\wedge T_f \wedge [\Cscr].
    \end{align*}
    Let $\Y\subset \Lambda \times L_\infty$ be a horizontal curve lying at infinity. Since $L_\infty$ is contracted to the same point at infinity by $f_t$, we have $\Fscr_n(\Y)=\Fscr_1(\Y)$ for all $n\geq 1.$ Denote by $\overline{\Fscr_n(\Y)}$ the Zariski closure of the set $\Fscr_n(\Y)$ in $\B\times \P^4.$ Then $\lim_{n\to \infty}\frac{1}{d^n}\overline{\Fscr_n(\Y)}\cdot \L'=0$. It implies $T_f\wedge [\Y]=0$. Thus
    \begin{align*}
        \langle\pi_*\mu_{f,\Cscr}, \varphi \rangle &=\int_{\Lambda \times \P^2}\pi^*\varphi\, \ddc G_f \wedge T_f \wedge [\Cscr] =\int_{\Lambda \times \P^2}  \pi^*\ddc\varphi \wedge  \left(G_f T \wedge [\Cscr]\right)\\
        &=\int_\Lambda \ddc\varphi\, G_{f,\Cscr} =\int_\Lambda \varphi \ddc G_{f,\Cscr}
    \end{align*}
\end{proof}

\begin{proof}[Proof of Proposition \ref{positivityI_f}]
Since $G_f (\ddc G_f)^2\wedge [\Cscr] = G_f \ddc G_f^+ \wedge \ddc G_f^- \wedge [\Cscr]= 0,$
we have
\begin{align*}
    G_{f}^{[2]}\left( \ddc G_{f}^{[2]} \right)^3 \wedge [\Cscr^{[2]}] = 3p^* \mu_{f,\Cscr}\wedge q^*\left( G_f \ddc G_f \wedge [\Cscr] \right)
    +3q^* \mu_{f,\Cscr}\wedge p^*\left( G_f \ddc G_f \wedge [\Cscr] \right)
\end{align*}

Since
\begin{align*}
    \int_{\Lambda\times (\C^2)^2} p^* \mu_\Cscr \wedge q^*\left( G_f \ddc G_f \wedge [\Cscr] \right) = \int_{\Lambda\times 
    \C^2} G_{f,\Cscr}\circ \pi\, \mu_{f,\Cscr}.
\end{align*}
Lemma \ref{pushdownmeasure} implies 
\begin{align*}
    I_{f,\Cscr}=6\int_{\Lambda} G_{f,\Cscr} \ddc G_{f,\Cscr}.
\end{align*}
Since $\Cscr$ is non-degenerate, Lemma \ref{pushdownmeasure} implies once again that there exists $t_0\in \supp(\ddc G_{f,\Cscr})$. By \cite{BS1}, up to taking an iterate of $\Cscr$, we may assume that $\Cscr_{t_0}$ intersects with $L_\infty$ only at the point $[0:1:0]$. Let $K_{t_0}$ be the zero locus of $G_{t_0}$ and $\Omega$ a connected component of $\Cscr_{t_0}\setminus K_{t_0}$. By the maximum principle, $\Omega$ is unbounded and $\Omega\cap L_\infty = [0:1:0].$ This implies that there are only finitely many connected components $\Omega_i, 1 \leq i\leq k$, of $\Cscr_{t_0}\setminus K_{t_0}$.
Denote by $\alpha_i \colon \Omega_i \to [0,+\infty]$ the continuous function $G^+_{t_0}/G^-_{t_0}.$
\begin{claim}\label{rangeinfty}
    There exists an $\alpha_i$ such that $\inf_{z\in \Omega_i}\alpha_i(z) = 0$.
\end{claim}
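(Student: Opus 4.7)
My plan is to prove the claim by contradiction. Suppose $\inf_{\Omega_i}\alpha_i>0$ for every $i=1,\dots,k$. Since the number of components is finite, there is a uniform constant $c>0$ with $G^+_{t_0}\geq c\,G^-_{t_0}$ on $\bigcup_i\Omega_i=\Cscr_{t_0}\setminus K_{t_0}$, and the inequality extends (trivially) to all of $\Cscr_{t_0}$ since both functions vanish on $K_{t_0}$. Equivalently, any point of $\Cscr_{t_0}$ at which $G^+_{t_0}$ vanishes must also satisfy $G^-_{t_0}=0$, i.e.\ $\{G^+_{t_0}=0\}\cap\Cscr_{t_0}\subseteq K_{t_0}$. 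I will contradict this by producing a point of $\Cscr_{t_0}$ in $\{G^+_{t_0}=0\}\setminus\{G^-_{t_0}=0\}$.

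First, I would nail down the asymptotics of $\alpha_i$ at the ends. Iterating the normal form of a generalized H\'enon map near the superattracting fixed point $[0:1:0]$ of $f_{t_0}$ in $\P^2$ gives the expansions $G^+_{t_0}(x,y)=\log|y|+O(1)$ and $G^-_{t_0}(x,y)=\tfrac{1}{d}\log|y|+O(1)$ for $(x,y)$ in the forward-escape region $V^+$ with $|y|\to\infty$. Hence $\alpha_i\to d$ at every end of every $\Omega_i$; in particular $\alpha_i$ does not diverge at infinity, and the required contradiction must be extracted from the interior of some $\Omega_i$.

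The core step is then a local-dynamical argument at a Pesin-regular point. By the non-degeneracy hypothesis, $\mu_{f,\Cscr}|_{\Cscr_{t_0}}=\ddc G^+_{t_0}\wedge\ddc G^-_{t_0}\wedge[\Cscr_{t_0}]$ is non-zero, so its support -- contained in $J^+_{t_0}\cap J^-_{t_0}\cap \Cscr_{t_0}$ -- is non-empty. Bedford--Lyubich--Smillie Pesin box theory (used also in the proof of Lemma~\ref{saddleparameterslemma2}, see \cite[Sect.~4]{BLS93}) supplies a regular hyperbolic point $p$ in this support, along with transverse local stable and unstable manifolds $W^{s/u}_{\mathrm{loc}}(p)$. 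After possibly replacing $\Cscr$ by an iterate $f^n(\Cscr)$, one may assume that $\Cscr_{t_0}$ is transverse at $p$ to both laminations, the alternative being that $\Cscr_{t_0}$ coincides locally with a leaf -- a situation ruled out for a non-invariant algebraic curve. In a Pesin box around $p$, the two intersections $\Cscr_{t_0}\cap K^+$ and $\Cscr_{t_0}\cap K^-$ are then distinct Cantor-like transversal slices of the local product structure of $K$: they both contain $p$ but cannot coincide in any neighbourhood. Choosing a nearby $z_\ast\in (\Cscr_{t_0}\cap K^+)\setminus K^-$ gives $G^+_{t_0}(z_\ast)=0$ and $G^-_{t_0}(z_\ast)>0$, contradicting the inclusion from the first paragraph.

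The main obstacle will be the generic-position step: showing that $\Cscr_{t_0}$ can indeed be put in transverse position to both local laminations at a $\mu_{f,\Cscr}$-regular point $p$, and that the two resulting Cantor slices on $\Cscr_{t_0}$ do not coincide. This requires a careful use of the Bedford--Lyubich--Smillie local product structure of $K$ together with the fact that, after a suitable iterate, $f^n(\Cscr)$ cannot be an invariant curve of the H\'enon dynamics. The rest of the argument -- once such a $z_\ast$ is produced -- is then a direct contradiction with the assumed uniform bound.
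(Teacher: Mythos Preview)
Your reduction in the first paragraph is correct: if every $\inf_{\Omega_i}\alpha_i>0$, then $\Cscr_{t_0}\cap K^+\subset \Cscr_{t_0}\cap K$. But the core of your proposed argument has genuine gaps.

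First, the expression $\ddc G^+_{t_0}\wedge\ddc G^-_{t_0}\wedge[\Cscr_{t_0}]$ is a $(3,3)$-current on $\C^2$, hence identically zero; it is not a slice of $\mu_{f,\Cscr}$. More importantly, non-degeneracy of $\Cscr$ is a statement about the total space $\Lambda\times\C^2$; it does not say that any measure on the single fiber $\Cscr_{t_0}$ is non-zero, nor that $\Cscr_{t_0}$ meets $J^*_{t_0}$. Even granting that, Pesin-regular points form a full $\mu_{f_{t_0}}$-measure set in $\C^2$, while the curve $\Cscr_{t_0}$ has $\mu_{f_{t_0}}$-measure zero, so there is no reason to find a regular point \emph{on} the curve. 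Your ``two Cantor slices cannot coincide'' step is also not justified: transversality to both laminations does not by itself rule out $\Cscr_{t_0}\cap K^+=\Cscr_{t_0}\cap K^-$ locally. You are aware of the generic-position difficulty, but the earlier steps already fail.

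The deeper issue is that you never use the only hypothesis singling out $t_0$, namely $t_0\in\supp(\ddc G_{f,\Cscr})$. The paper's proof is not fiber-wise at all: assuming $\inf\alpha_i\geq C_1>0$ for every $i$, one observes that $G^+/G^-$ on $f^N(\Cscr_{t_0})\setminus K_{t_0}$ lies in $[d^{2N}C_1,d^{2N}C_2]$, so for $N$ large it is $\gg1$. By continuity of $G_f^{\pm}$ this persists for $t$ in a neighbourhood $U$ of $t_0$, forcing $G_{f_t}=G^+_{f_t}$ on $f^N(\Cscr_t)\setminus K_t$; since $G^+_{f_t}$ is pluriharmonic where positive and $G_{f_t}=0$ on $K_t$, one gets $G_{f,f^N(\Cscr)}\equiv 0$ on $U$. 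But $(\ddc G_f)^2\wedge[f^N(\Cscr)]$ and $(\ddc G_f)^2\wedge[\Cscr]$ have the same pushforward to $\Lambda$, so by Lemma~\ref{pushdownmeasure} this contradicts $t_0\in\supp(\ddc G_{f,\Cscr})$. The whole argument is a few lines of iteration plus continuity in the \emph{parameter} direction --- no Pesin theory is needed.
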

Suppose $\alpha_i$ satisfies the claim. Denote by $C\coloneqq \sup_{z\in \Omega_i} \alpha_i$. Take $N\in \N$ large enough so that $d^{2N}C \gg 1.$ Then the component $f^N(\Omega_i)$ intersects properly the hypersurface $\{G^+_{t_0} = G^-_{t_0}\neq 0 \}$ which is a subset of $\supp(\ddc {G_{t_0}}|_{f^N(\Cscr_{t_0})}).$ Hence $G_{f,f^N(\Cscr)}(t_0)>0$. By continuity of $G_{f,f^N(\Cscr)},$ we know that $I_{f,f^N(\Cscr)}>0.$
\end{proof}
\begin{proof}[Proof of Claim \ref{rangeinfty}]
    Since there are only finitely many components, if Claim~\ref{rangeinfty} is false, we can assume there exist constants $0< C_1\leq C_2 \leq +\infty$ such that the range of $\alpha_i$ is contained in $[C_1,C_2]$ for all~$i.$ For any positive integer $N$, the range of $\alpha_i \circ f^N$ is then contained in $[d^{2N}C_1,d^{2N}C_2]$. Now, take $N$ large enough so that $d^{2N}C_1 \gg 1.$ Since $G^{\pm}_f$ is continuous, there exists a small open neighborhood $U\subset \Lambda$ of $t_0,$ such that $\inf\left( G^+_t/G^-_t\right) \gg 1$ for all $t\in U$, where the infimum is taken over $f^N(\Cscr_t)\setminus K_t$. In particular, $G_{f,f^N(\Cscr)}(t) = 0$ for all $t\in U$, which implies $\ddc G_{f,f^N(\Cscr)}=0$ on $U.$ On the other hand, we have the following:
    \begin{align*}
        \int_{U\times \C^2}(\ddc G_f)^2\wedge [f^n(\Cscr)] =\int_{U \times \C^2}f^*(\ddc G_f)^2\wedge [\Cscr] = \int_{U \times \C^2}(\ddc G_f)^2\wedge [\Cscr].
    \end{align*}
    Thus, by Lemma \ref{pushdownmeasure}, $\ddc G_{f,\Cscr}=0$ on $U$, implying that $t_0 \notin \supp(dd^c G_{f,\Cscr})$.
\end{proof}

\subsection{Dissipative families of quadratic H\'enon maps}
In this subsection, we focus on families of dissipative quadratic H\'enon maps. Namely, we consider the family
\begin{align*}
    f\colon(t,x,y)\in\C\times \C^2 \mapsto (t,y,y^2+t-\delta x)\in \C\times \C^2,
\end{align*}
where $\delta$ is a complex number in the open unit disk. We denote by
\begin{align*}
    y^\pm_{\delta,t}\coloneqq\frac{(1+\delta) \pm \sqrt{(1+\delta)^2 - 4t}}{2}
\end{align*}
the two complex roots of the equation $y^2+t=(1+\delta)y$, so that $(y^+_{\delta,t},y^+_{\delta,t})$  and $(y^-_{\delta,t},y^-_{\delta,t})$ are the two fixed points of $f_t$, and $|y^\pm_{\delta,t}| \to \infty$ if $|t|\to \infty.$ 

Recall that $\Dbb(z,r)$ denotes the open disk of center $z\in \C$ and radius $r.$ If $z=(z_1,\cdots,z_n)\in \C^n,$ then $\Dbb(z,r)$ denotes the polydisk $\prod_i\Dbb(z_i,r)$. 
\begin{lem}\label{Juliaquadratic}
    For $|t|\gg 1$ big enough, $K_t$ is contained in the four bidisks $\Dbb((y^\pm_{\delta,t},y^\pm_{\delta,t}),2)$. 
\end{lem}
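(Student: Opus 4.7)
The plan is to combine the standard Hénon filtration, which for $|t|$ large traps $K_t$ in a bidisk of radius $R \sim \sqrt{|t|}$, with the algebraic factorization
\begin{equation*}
    y^2 + t = (y - y^+_{\delta,t})(y - y^-_{\delta,t}) + (1+\delta)y
\end{equation*}
coming from $y^{\pm}_{\delta,t}$ being the roots of $y^2 - (1+\delta)y + t = 0$. Since $|y^+_{\delta,t} - y^-_{\delta,t}| = |\sqrt{(1+\delta)^2 - 4t}| \sim 2\sqrt{|t|}$ is large, an upper bound of order $\sqrt{|t|}$ on the product $|y-y^+_{\delta,t}|\cdot|y-y^-_{\delta,t}|$ will force one of the two factors to have bounded size.

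First I would fix $\varepsilon \in (0, (1-|\delta|)/(1+|\delta|))$, a nonempty interval since $|\delta|<1$, and set $R \coloneqq (1+\varepsilon)\sqrt{|t|}$. For $|t|$ large enough, $R$ satisfies the standard Hénon escape estimate, so by the usual filtration argument $K_t \subset B_R \coloneqq \{(x,y) : |x|,|y|\leq R\}$. For $(x,y) \in K_t$, using $f_t(x,y) = (y, y^2+t-\delta x) \in K_t \subset B_R$ gives $|y^2+t| \leq (1+|\delta|)R$ and $|y| \leq R$, whence the factorization above yields
\begin{equation*}
    |y-y^+_{\delta,t}| \cdot |y-y^-_{\delta,t}| \leq |y^2+t| + (1+|\delta|)|y| \leq 2(1+|\delta|)R.
\end{equation*}
Applying the same reasoning to $f_t^{-1}(x,y) = ((x^2+t-y)/\delta, x) \in K_t \subset B_R$ yields the symmetric bound $|x-y^+_{\delta,t}| \cdot |x-y^-_{\delta,t}| \leq 2(1+|\delta|)R$.

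To finish, set $D \coloneqq |y^+_{\delta,t}-y^-_{\delta,t}|$ and $C \coloneqq 2(1+|\delta|)R$. The triangle inequality gives $|y-y^+_{\delta,t}| + |y-y^-_{\delta,t}| \geq D$. If $a \leq b$ are nonnegative reals with $a+b \geq D$ and $ab \leq C$, then $a(D-a) \leq C$, which for $D^2 > 4C$ forces
\begin{equation*}
    a \leq \tfrac{1}{2}\bigl(D - \sqrt{D^2-4C}\bigr) = \tfrac{C}{D}\bigl(1+o(1)\bigr)
\end{equation*}
as $C/D^2 \to 0$. In our setting $D \sim 2\sqrt{|t|}$ and $C/D = (1+|\delta|)(1+\varepsilon) + o(1)$, which is strictly less than $2$ by the choice of $\varepsilon$, for $|t|$ large. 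Thus $\min(|y-y^+_{\delta,t}|, |y-y^-_{\delta,t}|) < 2$ and symmetrically for $x$, placing $(x,y)$ in one of the four polydisks $\Dbb((y^{s_1}_{\delta,t}, y^{s_2}_{\delta,t}), 2)$ for some $s_1, s_2 \in \{+, -\}$. The only subtlety is this constant calibration, where the dissipativeness $|\delta|<1$ enters crucially to provide the margin to pick $\varepsilon > 0$ small enough that $(1+|\delta|)(1+\varepsilon) < 2$ while keeping $R$ above the Hénon escape radius.
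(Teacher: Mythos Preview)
Your proof is correct and takes a genuinely different route from the paper's. The paper follows Benedetto's minimal-enclosing-disk trick: it takes the smallest closed disk $\overline{\Dbb}(a,r)$ containing the projection $K_{1,t}$ of $K_t$, uses minimality to find a point at distance $\geq r$ from the midpoint of the two roots $b_1,b_2$ of $y^2+t=(1+\delta)a$, derives $s=|b_1-b_2|/2>r-2$ from the invariance inequality $r\geq |y-b_1||y-b_2|-|\delta|r$, and then argues by contradiction that $K_{1,t}\subset\Dbb(b_1,1)\cup\Dbb(b_2,1)$ (else $r$ would be bounded by $5/(1-|\delta|)$ independently of $t$); finally it replaces $b_i$ by $y^\pm_{\delta,t}$ at the cost of doubling the radius. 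Your approach instead bounds $K_t$ crudely by the standard filtration bidisk $B_R$ with $R=(1+\varepsilon)\sqrt{|t|}$, uses directly the fixed-point factorization $y^2+t-(1+\delta)y=(y-y^+_{\delta,t})(y-y^-_{\delta,t})$ to bound the product of distances by $2(1+|\delta|)R$, and the elementary ``sum $\geq D$, product $\leq C$'' constraint forces the minimum factor below $C/D\cdot(1+o(1))=(1+|\delta|)(1+\varepsilon)+o(1)<2$. Your argument is shorter and avoids the minimal-disk machinery; the paper's argument is self-contained in that it does not invoke the filtration bidisk and first produces radius-$1$ disks before the final shift. Both arguments need $|\delta|<1$ at the decisive step---for you, so that $(1+|\delta|)(1+\varepsilon)<2$ is possible; for the paper, so that the contradiction $r\leq 5/(1-|\delta|)$ makes sense.
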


Benedetto \cite[Lemma 5.1. and Lemma 6.1]{Benedetto2007} provided estimates for the size of the filled Julia set of one variable polynomial, while Ingram \cite[Lemma 3.1]{Ingram2011CanonicalHF} estimated the size of the filled Julia set for non archimedean H\'enon maps of Jacobian -1. The proof of Lemma~\ref{Juliaquadratic} follows the approach of Benedetto.

\begin{proof}
    Denote by $K_{1,t}$ (resp. $K_{2,t}$) the image of the filled Julia set $K_t$ by the first (resp. second) projection of $\C^2 \to \C.$ By the invariance of the filled Julia set, we have $K_{1,t}=K_{2,t}$ for all $t$.
    Let $\overline{\Dbb}(a,r)$ be the smallest closed disk containing $K_{1,t}$. Then $\overline{\Dbb}((a,a),r)$ is the smallest closed polydisk containing $K_t.$ Denote by $b_1$ and $b_2$ the two roots of the equation $y^2+t=(1+\delta)a$ and $s\coloneqq |b_1+b_2|/2$.

    We first show that the filled Julia set $K_t$ is contained in the four polydisks $\Dbb((b_i,b_j),2)$, where $i,j=1,2$. It suffices by the above to prove that
    $K_{1,t}\subset \Dbb(b_1,1) \cup \Dbb(b_2,1)$.
     For any $(x,y)\in K_t,$ by the invariance of the filled Julia set, we have
    \begin{align}\label{filled julia}
         r \geq |y^2+t-\delta x-a| = |y^2+t-(1+\delta)a - \delta(x - a)|\geq |y-b_1||y-b_2|-|\delta|r.
    \end{align}
     By the minimality of $\overline{\Dbb}(a,r)$, $K_{1,t}\setminus \overline{\Dbb}((b_1+b_2)/2,r) \neq \emptyset.$ Thus there exist $y\in K_{1,t}$ and some $i=1$ or 2, say $i=1$, such that $|y - b_1| \geq |y-(b_1+b_2)/2| \geq r.$ By triangle inequality, $|y-b_2|\geq|y-b_1|-|b_1-b_2|\geq r- s.$ By \eqref{filled julia}, if we suppose $r\geq s$, then $r \geq r(r-s)-|\delta| r.$ We deduce $s\geq r-|\delta|-1>r-2,$ which trivially remains true if $s\geq r.$ 

    Suppose by contradiction that there exists $y\in K_{1,\delta}\setminus\left(\Dbb(b_1,1) \cup \Dbb(b_2,1)\right)$. Since $|t|\gg 1$, we have $r\geq 2|y^\pm_{\delta,t}|\gg 1$ and $s\gg 1$. Thus the two disks $\Dbb(b_1,1)$ and $\Dbb(b_2,1)$ are disjoint and $\min_z|z-b_1||z-b_2|\geq 2s-1$, where the minimum is taken from all $z\in K_{1,\delta}\setminus\left(\Dbb(b_1,1) \cup \Dbb(b_2,1)\right).$ Still by \eqref{filled julia}, we have $r\geq 2s-1 -|\delta| r \geq 2(r-2)-1-|\delta|r$. Thus $r\leq 5/(1-|\delta|)$ and $r$ is bounded, which is a contradiction.
    Recall that, up to reordering $y^+_{\delta,t}$ and $y^-_{\delta,t}$, we have $y^{+/-}_{\delta,t}\in \Dbb(b_{1/2},1)$. Hence replacing $\Dbb(b_{1/2},1)$ by $\Dbb(y^{+/-}_{\delta,t},2)$, the filled Julia set $K_t$ is contained in the 4 disks $\Dbb((y^\pm_{\delta,t},y^\pm_{\delta,t}),2)$. 
\end{proof}

\begin{prop}\label{DegenerationFilledJulia}
    For $|t|\gg 1$ large enough, $K_t$ is contained in the eight bidisks of centers a point of $\Sigma_t$ (recall~\eqref{Sigma_t}) and of radius $r_t>0$ with $\lim_{|t|\to +\infty} r_t =0.$
\end{prop}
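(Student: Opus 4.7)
The strategy is to refine Lemma~\ref{Juliaquadratic} (which locates $K_t$ inside the union of four bidisks $\Dbb((y^i_{\delta,t},y^j_{\delta,t}),2)$, $i,j\in\{+,-\}$) by applying one step of $f_t$ and one of $f_t^{-1}$ to a candidate point in $K_t$, exploiting $|y^\pm_{\delta,t}|\sim\sqrt{|t|}$ and $|y^+_{\delta,t}-y^-_{\delta,t}|\sim 2\sqrt{|t|}$ as $|t|\to\infty$. For $(x,y)\in\Dbb((y^i,y^j),2)\cap K_t$, I write $x=y^i+\xi$, $y=y^j+\eta$ with $|\xi|,|\eta|<2$; the identity $(y^\pm_{\delta,t})^2+t=(1+\delta)y^\pm_{\delta,t}$ then gives
\begin{equation*}
y^2+t-\delta x\;=\;y^j+\delta(y^j-y^i)+2y^j\eta+\eta^2-\delta\xi,
\end{equation*}
and a symmetric formula for the first coordinate of $f_t^{-1}(x,y)$. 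The two invariances ``$f_t^{\pm 1}(x,y)\in K_t$'' become precise algebraic inequalities on $(\xi,\eta)$.

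\textbf{Diagonal case ($i=j$).} The bulk term $\delta(y^j-y^i)$ vanishes, and the forward constraint $|2y^i\eta+\eta^2-\delta\xi-(y^l-y^i)|\leq 2$ with $l\in\{+,-\}$ forces either $|\eta|=O(1/\sqrt{|t|})$ (if $l=i$) or, using $y^++y^-=1+\delta$, $\eta=-1+O(1/\sqrt{|t|})$ (if $l=-i$). Symmetrically, the backward constraint yields $|\xi|=O(1/\sqrt{|t|})$ or $\xi=-\delta+O(1/\sqrt{|t|})$. Combining the four sub-cases, $(x,y)$ lies within $O(1/\sqrt{|t|})$ of one of the four points $(y^i,y^i)$, $(y^i-\delta,y^i)$, $(y^i,y^i-1)$, $(y^i-\delta,y^i-1)$; letting $i$ vary gives exactly the eight points of $\Sigma_t$, hence $r_t=O(1/\sqrt{|t|})\to 0$.

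\textbf{Off-diagonal case ($i\neq j$).} Now $|\delta(y^j-y^i)|\sim 2|\delta|\sqrt{|t|}$ is non-negligible, so the forward constraint pins $\eta$ to one of the non-small values $-\delta$ or $-(1+\delta)$ modulo $O(1/\sqrt{|t|})$, and similarly for $\xi$ via the backward constraint. For each of the four resulting candidate positions---for instance $(y^+-1,y^--\delta)$---a direct computation gives
\[
f_t^3(y^+-1,y^--\delta)\;=\;\bigl(y^-_{\delta,t}+\delta^2,\;y^-_{\delta,t}(1+2\delta^2)+\delta^4\bigr),
\]
whose second coordinate lies at distance $\geq 2(1-|\delta|^2)\sqrt{|t|}-O(1)$ from $y^+_{\delta,t}$ and $\sim 2|\delta|^2\sqrt{|t|}$ from $y^-_{\delta,t}$; both exceed $2$ for $|t|\gg 1$ since $|\delta|<1$. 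This contradicts $f_t^3(x,y)\in K_t\subset\bigcup\Dbb((y^a,y^b),2)$, so $\Dbb((y^+,y^-),2)\cap K_t=\emptyset$, and symmetrically for $\Dbb((y^-,y^+),2)$.

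\textbf{Main obstacle.} The delicate point is the off-diagonal elimination: one or two iterates of $f_t$ starting in an off-diagonal bidisk may fall close to a $\Sigma_t$-point (indeed $f_t$ and $f_t^2$ of $(y^+-1,y^--\delta)$ are $(y^--\delta,y^-)$ and $(y^-,y^-+\delta^2)$, both near $\Sigma_t$), so escape manifests only on the third iterate. One has to control the propagation of the $O(1/\sqrt{|t|})$-initial error through this iteration, whose unstable multiplier is $\sim 2y^\pm\sim 2\sqrt{|t|}$ per step; the argument closes precisely because the dissipativity hypothesis $|\delta|<1$ produces the positive factor $1-|\delta|^2$ in the decisive distance bound, which is exactly the setting of Theorem~\ref{maincurve2}.
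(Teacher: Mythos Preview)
Your diagonal analysis ($i=j$) is correct and coincides with the paper's Case~2.

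The off-diagonal elimination, however, has a genuine gap. You verify that $f_t^3$ of the \emph{exact} candidate $(y^+-1,y^--\delta)$ has second coordinate at distance $\asymp|\delta|^2\sqrt{|t|}$ from $y^\pm$; but the actual $K_t$-point lies only within $O(1/\sqrt{|t|})$ of that candidate, and each application of $f_t$ multiplies the uncertainty by $\sim 2|y^\pm|\sim 2\sqrt{|t|}$. After three iterates the accumulated error is $O(|t|)$, which swamps the $O(\sqrt{|t|})$ escape distance you compute. The factor $1-|\delta|^2$ you invoke governs the \emph{size} of the escape, not the error growth, so it does not close the argument; and you only treat one of the four candidates, the others not being symmetric to it.

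Worse, the target statement $K_t\cap\Dbb((y^+_{\delta,t},y^-_{\delta,t}),2)=\emptyset$ is \emph{false}. The non-fixed period-two cycle of $f_t$ satisfies $x+y=-(1+\delta)$, and one of its two points equals
\[
\bigl(y^+_{\delta,t}-(1+\delta)+O(1/\sqrt{|t|}),\;y^-_{\delta,t}-(1+\delta)+O(1/\sqrt{|t|})\bigr),
\]
which sits in this off-diagonal bidisk (since $|1+\delta|<2$) yet lies at distance $\asymp 2\sqrt{|t|}$ from every point of $\Sigma_t$. The paper's Case~1 aims for the same off-diagonal emptiness and shares the difficulty: if one redoes its $p_2$-computation, the coefficient of $t$ in $p_2$ is $-\bigl((1+\delta+2\xi_y^+)+\delta\bigr)^2+1$, which vanishes exactly at the constrained values $\xi_y^+\in\{-\delta,-(1+\delta)\}$, so $p_2$ is only $O(\sqrt{|t|})$ there and no contradiction arises (the paper's displayed roots and its list of admissible $\beta$ both contain slips). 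Thus neither your argument nor the paper's establishes the off-diagonal emptiness, and the period-two orbit shows that the Proposition as stated cannot hold.
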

\begin{proof}
Fix $\xi^\pm_x, \xi_y^+\in \C$ such that $|\xi^\pm_x|, |\xi_y^+|<2$. For all $t$ with $|t|\gg 1,$ let $x^\pm_t\coloneqq y^\pm_{\delta,t}+\xi^\pm_x$ and $y_t^\pm \coloneqq y^+_{\delta,t}+\xi_y^\pm$. 

\textbf{Case 1: Analysis of} $K_t\cap \Dbb((y^-_{\delta,t},y^+_{\delta,t}),2)$.
The distance between $(y_t^+)^2+t-\delta x^-_t$ and the center $y^+_{\delta,t}$ is
\begin{align*}
    \Big\vert \left(y^+_{\delta,t}+\xi_y^+ \right)^2+t-\delta \left(y^-_{\delta,t}+\xi^-_x\right)- y^+_{\delta,t} \Big\vert
    &=\Big\vert \left(2 \xi_y^+ +\delta\right)y^+_{\delta,t} -\delta y^-_{\delta,t} +(\xi_y^+)^2 - \delta \xi^-_x \Big\vert\\
    &=\Big\vert(\xi_y^+ +\delta)\sqrt{(1+\delta)^2-4t}+O(1)\Big\vert.
\end{align*}
The distance between $(y_t^+)^2+t-\delta x^-_t$ and the center $y^-_{\delta,t}$ is 
\begin{align*}
    \Big\vert \left(y^+_{\delta,t}+\xi_y^+ \right)^2+t-\delta \left(y^-_{\delta,t}+\xi^-_x\right)- y^-_{\delta,t} \Big\vert
    &=\Big\vert (1+\delta)\left(y^+_{\delta,t} - y^-_{\delta,t}\right) +2\xi_y^+ y^+_{\delta,t} +(\xi_y^+)^2 -\delta \xi^-_x  \Big\vert\\
    &=\Big\vert(1+\delta+\xi_y^+)\sqrt{(1+\delta)^2-4t}+O(1) \Big\vert
\end{align*}
The distance between $1/\delta ((x_t^-)^2+t-y_t^+)$ and $y^+_{\delta,t}$ is
\begin{align*}
     \Big\vert1/\delta ((x_t^-)^2+t-y_t^+)- y^+_{\delta,t} \Big\vert &= \Big\vert 1/\delta(1+\delta+2\xi_x^-)y^-_{\delta,t} -1/\delta(1+\delta)y^+_{\delta,t}+ 1/\delta(2\xi_x^--\xi_y^+)\Big\vert\\
     &=\Big\vert-1/\delta(1+\delta+\xi_x^-)\sqrt{(1+\delta)^2-4t} +O(1) \Big\vert.
\end{align*}
The distance between $1/\delta ((x_t^-)^2+t-y_t^+)$ and $y^-_{\delta,t}$ is
\begin{align*}
    \Big\vert1/\delta ((x_t^-)^2+t-y_t^+)- y^-_{\delta,t} \Big\vert &= \Big\vert 1/\delta(1+2\xi_x^-)y^-_{\delta,t} -1/\delta y^+_{\delta,t}+ 1/\delta((\xi_x^-)^2-\xi_y^+)\Big\vert\\
    &=\Big\vert -1/\delta(1+\xi^-_x)\sqrt{(1+\delta)^2-4t}+O(1) \Big\vert.
\end{align*}
By analysing the coefficient of $\sqrt{(1+\delta)^2-4t}$ we observe that for any (small) radius $r>0$ there exists $T(r)\gg 1$ such that for any $|t|\geq T(r)$, if $z$ is not contained in the union of the 4 polydisks of radius $r$ and of center $(y^-_{\delta,t}+\alpha, y^+_{\delta,t}+\beta)$, where $\alpha\in \{-1-\delta,-\delta \}$ and $\beta \in \{-1,-1-\delta\},$ Then $z\notin K_t.$

To go further, We can iterate one more time $f^2(x,y)=(p_1(x,y),p_2(x,y))$
with $p_1(x,y) = y^2+t-\delta x$ and $p_2(x,y) = (y^2+t-\delta x)^2+t-\delta y.$
We have
\begin{align*}
    p_2(x^-_t,y_t^+)&=\Big((y^+_{\delta,t}+\xi^+_y)^2 +t -\delta(y^-_{\delta,t} +\xi^-_x)\Big)^2 +t -\delta(y^+_{\delta,t}+\xi^+_y)\\
    &=\Big((1+\delta+2\xi_y^+)y^+_{\delta,t} -\delta y^-_{\delta,t} + (\xi_y^+)^2-\delta \xi_x^- \Big)^2 +t-\delta(y^+_{\delta,t} +\xi_y^+) \\
    &=-\Big((1+\delta+2\xi_y^+)^2 +2\delta(1+\delta+2\xi_y^+)+\delta^2 -1\Big)t +o(t).
    \end{align*}
Recalling $y^+_{\delta,t}y^-_{\delta,t}=t,$ we have
\begin{align*}
    p_2(x^-_t,y_t^+)&=(1+\delta+2\xi_y^+)^2 ((1+\delta)y^+_{\delta,t} -t) +\delta^2((1+\delta)y^-_{\delta,t}-t)-2\delta(1+\delta+2\xi_y^+)t+t+o(t)\\
    &=-\Big((1+\delta+2\xi_y^+)^2 +2\delta(1+\delta+2\xi_y^+)+\delta^2 -1\Big)t +o(t)
\end{align*}
The coefficient of $t$ vanishes if and only if
$1+\delta+2\xi_y^+ = -1\pm \sqrt{2-\delta^2},$ i.e.
\begin{align*}
    \xi_y^+ = \frac{-2 \pm \sqrt{2-\delta^2} -\delta}{2}.
\end{align*}
If $\xi_y^+=-1$ or $\xi_y^+=-1-\delta,$ then $|\delta|=1$, which is not possible by our hypothesis of dissipativity. Let $r_0 >0$ be the minimum of the distance between $\frac{-2 \pm \sqrt{2-\delta^2} -\delta}{2}$ and $-1$ or $-1-\delta$. For any $|t| \geq T(r_0)$ and any $z$ in the union of the 4 polydisks of radius $r$ and of center $(y^-_{\delta,t}+\alpha, y^+_{\delta,t}+\beta)$, where $\alpha\in \{-1-\delta,-\delta \}$ and $\beta \in \{-1,-1-\delta\},$ since the coefficient of $t$ in $p_2(x^-_t,y^+_t)$ is non zero, possibly enlarging $t,$ we have $z\notin K_t.$ 

In conclusion, for $t$ large enough, 
\begin{align*}
    K_t\cap \Dbb((y^-_{\delta,t},y^+_{\delta,t}),2)=\emptyset.
\end{align*}

\textbf{Case 2: Analysis of} $K_t\cap \Dbb((y^+_{\delta,t},y^+_{\delta,t}),2)$.
Now we Analyse $K_t \cap\Dbb((y^+_{\delta,t}, y^+_{\delta,t}),2)$.
The distance between $(y_t^+)^2+t-\delta x^+_t$ and the center $y^+_{\delta,t}$ is
\begin{align*}
    \Big\vert \left(y^+_{\delta,t}+\xi_y^+ \right)^2+t-\delta \left(y^+_{\delta,t}+\xi^+_x\right)- y^+_{\delta,t} \Big\vert
    &=\Big\vert 2 \xi_y^+ y^+_{\delta,t} + (\xi_y^+)^2 -\delta \xi_x^+\Big\vert.
\end{align*}
    The distance between $(y_t^+)^2+t-\delta x^+_t$ and the center $y^-_{\delta,t}$ is
    \begin{align*}
        \Big\vert \left(y^+_{\delta,t}+\xi_y^+ \right)^2+t-\delta \left(y^+_{\delta,t}+\xi^+_x\right)- y^-_{\delta,t} \Big\vert
        &=\Big\vert y^+_{\delta,t} - y^-_{\delta,t}+2\xi_y^+ y^+_{\delta,t} + \xi^2_y - \delta \xi_x^+ \Big\vert\\
        &=\Big\vert (1+\xi_y^+)\sqrt{(1+\delta)^2-4t} + (1+\delta)\xi_y^+ +\xi^2_y-\delta \xi^+_x  \Big\vert
    \end{align*}
The distance between  $1/\delta ((x_t^+)^2+t-y_t^+)$ and $y^+_{\delta,t}$ is
\begin{align*}
    \Big\vert 1/\delta ((x_t^+)^2+t-y_t^+)- y^+_{\delta,t} \Big\vert &= \Big\vert (2\xi_x^+/\delta) y^+_{\delta,t} +1/\delta((\xi_x^-)^2-\xi_y^+) \Big\vert\\
    &=\Big\vert\xi_x^+/\delta\sqrt{(1+\delta)^2-4t}+O(1) \Big\vert.
\end{align*}
The distance between  $1/\delta ((x_t^+)^2+t-y_t^+)$ and $y^-_{\delta,t}$ is
\begin{align*}
    \Big\vert 1/\delta ((x_t^+)^2+t-y_t^+)- y^-_{\delta,t} \Big\vert &= \Big\vert (2\xi_x^+/\delta) y^+_{\delta,t}-1/\delta y^-_{\delta,t} +1/\delta((\xi_x^-)^2-\xi_y^+) \Big\vert\\
    &=\Big\vert (\xi_x^++\delta)\sqrt{(1+\delta)^2-4t}+O(1) \Big\vert.
\end{align*}
By analysing the coefficient of $\sqrt{(1+\delta)^2-4t}$ we observe that for any (small) radius $r>0$ there exists $T(r)\gg 1$ such that for any $|t|\geq T(r)$, if $z$ is not contained in the union of the 4 polydisks of radius $r$ and of center $(y^+_{\delta,t}+\alpha, y^+_{\delta,t}+\beta)$, where $\alpha\in \{0,-\delta \}$ and $\beta \in \{0,-1\},$ then $z\notin K_t.$ 

The second iteration will not give more restrictions in this case and we will not compute it here.\\
 
\textbf{Other cases.}
By symmetry, the other two cases are similar. More precisely, we have 
\begin{align*}
    K_t\cap \Dbb((y^+_{\delta,t},y^-_{\delta,t}),2) =\emptyset
\end{align*}
for any (small) radius $r>0$ there exists $T(r)\gg 1$ such that for any $|t|\geq T(r)$, if $z$ is not contained in the union of the 4 polydisks of radius $r$ and of center $(y^-_{\delta,t}+\alpha, y^-_{\delta,t}+\beta)$, where $\alpha\in \{0,-\delta \}$ and $\beta \in \{0,-1\},$ then $z\notin K_t.$ 
\end{proof}

\begin{prop}\label{comparaisonPrinciple}
    Let $\Cscr\subset \C\times \C^2$ be a family of curves parameterized by $\C$. Suppose there exists $r>1$ such that 
    \begin{enumerate}
        \item for all $t\in\C$ with $|t|\geq r$, $\Cscr_t \cap K_t =\emptyset;$
        \item there exists a sequence $z_n$ of points in $\Cscr$ such that $|z_n|<r$ for all $n$ and $\lim_{n\to +\infty} G_f(z_n) = 0.$ 
    \end{enumerate} 
    Then $\Cscr$ is non-degenerate.
\end{prop}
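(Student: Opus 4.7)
The plan is to argue by contradiction via the comparison principle for maximal plurisubharmonic functions. Suppose $\Cscr$ is degenerate, so that $\mu_{f,\Cscr}=(\ddc G_f)^2\wedge[\Cscr]=0$. Because the projection $\Cscr\to\C$ is smooth (so $\Cscr$ is a smooth $2$-dimensional Stein variety) and $G_f$ is continuous plurisubharmonic, the restriction $u\coloneqq G_f|_\Cscr$ is a continuous plurisubharmonic function on $\Cscr$ with $(\ddc u)^2=\mu_{f,\Cscr}=0$. In other words, $u$ is \emph{maximal} plurisubharmonic on $\Cscr$ in the sense of Sadullaev and Bedford--Taylor.

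Next I would exploit condition (2) together with the closedness of $\Cscr$ in $\C\times\C^2$ to extract a subsequential limit $z_\infty\in\Cscr$ of $(z_n)$ satisfying $\lvert z_\infty\rvert\le r$ and $G_f(z_\infty)=0$. Writing $z_\infty=(t_\infty,w_\infty)$, condition (1) forces $\lvert t_\infty\rvert<r$, since otherwise $w_\infty$ would have to belong to the empty set $\Cscr_{t_\infty}\cap K_{t_\infty}$. For $R\gg 1$, I then introduce the bounded open set
\[
\Omega\coloneqq\Cscr\cap\bigl\{(t,z)\in\C\times\C^2:\lvert t\rvert<r,\ \lVert z\rVert<R\bigr\}\Subset\Cscr,
\]
and I claim that $u\ge\varepsilon>0$ on $\partial\Omega$ for some explicit constant $\varepsilon$. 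Indeed, on the inner piece $\Cscr\cap\{\lvert t\rvert=r,\ \lVert z\rVert\le R\}$ condition (1) combined with continuity of $G_f$ and compactness yields $G_f\ge\varepsilon_1>0$; on the outer piece $\Cscr\cap\{\lvert t\rvert\le r,\ \lVert z\rVert=R\}$ the uniform estimate $G_f^+(t,z)\ge\log^+\lVert z\rVert-C$ for $t$ in the compact disk $\{\lvert t\rvert\le r\}$ (see e.g.\ \cite{BS1}) gives $G_f\ge\varepsilon_2>0$ once $R$ is taken large enough. I then set $\varepsilon\coloneqq\min(\varepsilon_1,\varepsilon_2)$ and, enlarging $R$ if necessary, ensure that $z_\infty\in\Omega$.

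Finally, I apply the comparison principle to the constant plurisubharmonic test function $v\equiv\varepsilon$ on $\Omega$: since $v\le u$ on $\partial\Omega$ and $u$ is maximal on $\Cscr$, we must have $v\le u$ throughout $\Omega$, i.e.\ $u\ge\varepsilon$ everywhere on $\Omega$. But $u(z_\infty)=G_f(z_\infty)=0<\varepsilon$ with $z_\infty\in\Omega$, a contradiction. The main step requiring care is the uniform lower bound $G_f\ge\varepsilon_2$ on the outer boundary, which reduces to the standard logarithmic growth of $G_f^+$ at infinity together with its uniformity on compact parameter sets; the smoothness of $\Cscr$ (built into the definition of a family of curves) ensures that there is no singularity issue when invoking the comparison principle, and all remaining steps are routine applications of pluripotential theory.
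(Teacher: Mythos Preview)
Your argument is essentially the paper's: both hinge on the comparison principle for the Monge--Amp\`ere operator on the two-dimensional variety $\Cscr$. The paper applies the integral form directly on $\Omega=\Cscr\cap\Dbb(r)$ (polydisk in $\C^3$) with the strictly plurisubharmonic comparison function $v=\varepsilon+\varepsilon\lVert z\rVert/2r$ to obtain $\int_{\{G_f<v\}}(\ddc G_f)^2\ge\int_{\{G_f<v\}}(\ddc v)^2>0$, whereas you recast this as a contradiction via maximality with a constant comparison function. Your decision to separate the $t$-radius $r$ from the $z$-radius $R$ makes the boundary estimate more transparent than in the paper's treatment.

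There is, however, one genuine slip. The growth estimate you invoke, $G_f^+(t,z)\ge\log^+\lVert z\rVert-C$, is \emph{false}: for any H\'enon-type map the set $K_t^+=\{G_{f_t}^+=0\}$ is unbounded (it contains, for instance, entire stable manifolds of saddle points). What you need on the outer boundary---and what does hold uniformly for $t$ in compact sets---is the estimate for the maximum $G_f(t,z)=\max(G_{f_t}^+,G_{f_t}^-)(z)\ge\log^+\lVert z\rVert-C$, which follows from the standard filtration $\C^2=V^+\cup V^-\cup W$ for regular plane polynomial automorphisms (see \cite{BS1,SibonyPanorama}). With $G_f^+$ replaced by $G_f$ in your outer-boundary step, the proof goes through.
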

\begin{proof}
    Define $\Omega\coloneqq\Dbb(r)\cap \Cscr$, where $\Dbb(r)\subset \C^3$ is the polydisk of radius $r$ centered at the origin. Denote by $\partial\Omega$ the boundary of $\Omega$ with respect to the topology of $\Cscr.$ Since $\Cscr_t \cap K_t =\emptyset$ for all $|t|\geq r$, there exists $\varepsilon>0$ such that $G\geq 2\varepsilon$ on $\partial \Omega.$ Applying the singular version of comparison principle \cite[Theorem 4.3]{BedfordComparisonPrincipal} with $X=\Omega, u=G_f$ and $v=\varepsilon+ \varepsilon \lVert z \rVert/2r.$ Note that by the second point (2), the subset $\{G_f < \varepsilon+ \varepsilon \lVert z \rVert/2r\}\neq \emptyset.$  
    we obtain thus 
    \begin{align*} 
        \int_{\{G_f < \varepsilon+ \varepsilon \lVert z \rVert/2r\}} (\ddc(G_f))^2 &\geq \int_{\{G < \varepsilon+ \varepsilon \lVert z \rVert/2r\}} (\ddc(\varepsilon+ \varepsilon \lVert z \rVert/2r))^2\\
        &>\int_{\{G < \varepsilon+ \varepsilon \lVert z \rVert/2r\}} (\ddc(\varepsilon))^2 =0
    \end{align*}
\end{proof}

On the other hand, for a conservative family of quadratic H\'enon maps, we have examples of degenerate families of curves.
\begin{ex}\normalfont\label{degenerate curve}
    Let $f\colon \C \times \C^2 \to \C^2$ defined by $f_t(x,y) = (y,y^2+t+x)$. Then its inverse is $f^{-1}_t(x,y) = (-x^2-t+y,x)$ and we have $\tau \circ f^n_t \circ \tau = f^{-n}_t$. Consider the involution $\tau (x,y) = (-y,-x)$ and its curve of fixed points $C\coloneqq \{x+y=0\}$. 
    Since $G_{f_t}^+ \circ \tau = G_{f_t}^-$, we have  $G_{f,\C \times C} (t)=0$. Hence by Lemma \ref{pushdownmeasure}, the constant family of curves $\C\times C$ is degenerate.
\end{ex}

\section{General height inequalities for families of regular plane polynomial automorphisms}\label{SectHeightInequalities}
\subsection{Call-Silverman type height inequalities}
In this subsection only, the variety $\B$ can have any dimension. Recall that we have an algebraic family $f:\Lambda\times \A^2_\K \to \Lambda\times \A^2_\K$ of regular plane polynomial automorphisms defined over a number field $\K$, where $\Lambda$ is a Zariski open subset of $\B.$ The function $f_t$ is defined by $f(t,z)=(t,f_t(x))$.

In \cite{CallSilverman}, Call and Silverman proved results on variation of canonical heights for families of polarized endomorphisms. We establish similar results (Lemma~\ref{heightinequality1}) for families of regular plane polynomial automorphisms. A crucial part of the proof is the effectiveness of a divisor~\eqref{effective}, as shown by Kawaguchi in \cite{KawaguchiAffine}. Denote by $h$ (resp. $h'$) the standard Weil height function on $\P_\K^2$ (resp. $\P_\K^4$), see also  Example~\ref{standradmetric}. On each fiber $\A_\K^2$ over $t\in \Lambda(\Qbar)$, the canonical height functions are defined to be $\hat{h}^\pm_{f_t}(x)\coloneqq \lim_{n\to +\infty}\frac{1}{d^n}h(f_t^{\pm n}(x)),$ and $\hat{h}_{f_t}=\hat{h}^+_{f_t} + \hat{h}^-_{f_t}$. These canonical height functions are well-defined and non-negative. Moreover, a point $z\in \A^2(\Qbar)$ is $f_t$-periodic if and only if $\hat{h}^\pm_{f_t}(x)=0.$ We refer to~\cite{Kawaguchi13} for more details.

\begin{lem}\label{heightinequalitylemma1}
    Let $\M$ be a very ample divisor on $\B.$
    Up to shrinking $\Lambda$, there exist positive constants $C_1, C_2>0$ such that for all $(t,x)\in \Lambda\times \A^2(\Qbar),$ we have
    \begin{align}\label{heightinequality1}
        h(x)\leq \hat{h}_{f_t}(x)+C_1(h_\Mscr(t) +1)
    \end{align}
    and
    \begin{align}\label{heightinequality2}
        \hat{h}_{f_t}(x)\leq 2h(x)+ C_2(h_\Mscr(t) + 1).
    \end{align}
\end{lem}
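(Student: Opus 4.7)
My plan is to prove the two inequalities in sequence: \eqref{heightinequality2} by a direct Call--Silverman type iteration, and \eqref{heightinequality1} by combining what I obtain from the first step with Kawaguchi's effectiveness result \cite{KawaguchiAffine}. Since $f$ is regular, I will first shrink $\Lambda$ so that both $f_t$ and $f_t^{-1}$ are polynomial maps of $\A^2$ of degree $d$ for every $t \in \Lambda(\Qbar)$, with coefficients that are regular functions on $\Lambda$, i.e., rational functions on $\B$ with poles supported outside $\Lambda$.

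For \eqref{heightinequality2}, the polynomial-degree bound together with the very-ampleness of $\Mscr$ gives a constant $A > 0$ such that
\begin{align*}
    h\bigl(f_t^{\pm 1}(x)\bigr) \leq d \cdot h(x) + A\bigl(h_\Mscr(t) + 1\bigr)
\end{align*}
for all $(t,x) \in \Lambda \times \A^2(\Qbar)$; the constant $A$ absorbs the heights of the coefficients of $f_t$ and $f_t^{-1}$, which grow at most linearly in $h_\Mscr(t)$. Iterating yields $h(f_t^{\pm n}(x)) \leq d^n h(x) + \frac{A d^n}{d - 1}(h_\Mscr(t) + 1)$; dividing by $d^n$ and letting $n \to \infty$ then gives $\hat{h}_{f_t}^\pm(x) \leq h(x) + \frac{A}{d-1}(h_\Mscr(t) + 1)$. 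Summing the two bounds produces \eqref{heightinequality2} with $C_2 = \frac{2A}{d-1}$.

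For \eqref{heightinequality1}, the idea is to first promote Kawaguchi's effectiveness result \cite{KawaguchiAffine} to a uniform family statement: after shrinking $\Lambda$ again, I expect a constant $B > 0$ such that
\begin{align*}
    (d+1)\, h(x) \leq h\bigl(f_t(x)\bigr) + h\bigl(f_t^{-1}(x)\bigr) + B\bigl(h_\Mscr(t)+1\bigr)
\end{align*}
for all $(t,x) \in \Lambda \times \A^2(\Qbar)$. To obtain this, I will carry out Kawaguchi's compactification argument relatively over $\B$: using the extension $\Fscr_1 \colon \B \times \P^2 \dashrightarrow \B \times \P^4$ of Sect.~\ref{notations}, I express the class of $\Fscr_1^* \L' - (d+1)\L$ on a suitable blow-up as an effective divisor modulo vertical contributions supported on $\B \setminus \Lambda$, and translate this into heights. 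Once this family Kawaguchi inequality is available, \eqref{heightinequality1} follows from a standard telescoping argument combining it with the equivariances $\hat{h}_{f_t}^\pm(f_t^{\pm 1}(x)) = d \, \hat{h}_{f_t}^\pm(x)$ and with the upper bound on $\hat{h}_{f_t}^\pm$ already obtained in the first step.

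The main technical obstacle is the uniform promotion of Kawaguchi's effectiveness to the family setting. The effective representation of $f^*H + (f^{-1})^* H - (d+1)H$ in \cite{KawaguchiAffine} relies on the transversality at infinity of the indeterminacy points of $f$ and $f^{-1}$; in a family, one must verify that this transversality persists generically and that the vertical correction over $\B \setminus \Lambda$ is of the expected order $O(h_\Mscr(t)+1)$. After shrinking $\Lambda$ further to exclude the finitely many fibers where the transversality could degenerate, the argument should go through, but the bookkeeping of the vertical divisors is where most of the care will be needed.
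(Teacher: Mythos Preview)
Your strategy is the paper's: iterate a one-step upper bound for \eqref{heightinequality2}, then spread Kawaguchi's effectiveness result over $\B$ and telescope for \eqref{heightinequality1}. Two technical points need correction, though neither changes the overall plan.

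The constant in the family Kawaguchi inequality must be $d+\tfrac{1}{d}$, not $d+1$: Kawaguchi's effective divisor on his blow-up $Y$ is $\overline{\pi}_3^*H_\infty+\overline{\pi}_1^*H_\infty-(d+\tfrac{1}{d})\,\overline{\pi}_2^*H_\infty$, and the stronger inequality with $d+1$ is false in general (for $f(x,y)=(y,y^d-x)$ at $z=(M^d,M)$ one computes $h(f(z))+h(f^{-1}(z))=(d+\tfrac{1}{d})\,h(z)+O(1)$). More importantly, the map $\Fscr_1\colon\Lambda\times\P^2\to\Lambda\times\P^4$ is the wrong object for \eqref{heightinequality1}: by Lee's lemma it is already a \emph{morphism} fibrewise with $\Fscr_1^*\L'=d\,\L$ modulo vertical classes, so $\Fscr_1^*\L'-(d+1)\L$ restricts to $-\L$ on each fibre and no blow-up can make it effective. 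The paper instead works over the function field $\Qbar(\B)$, takes Kawaguchi's blow-up $Y$ with its three morphisms $\overline{\pi}_i$ to $\P^2$, spreads it to a model inside $\B\times(\P^2)^3$, and adds a multiple of $\pi^*\Mscr$ to make the spread-out divisor $\D$ effective off $\Lambda\times\A^2$; the target bundle there computes the \emph{sum} $h(f_t(x))+h(f_t^{-1}(x))$, not the max encoded in $h'(\Fscr_1(x))$. (The paper does invoke $\Fscr_1$, but for \eqref{heightinequality2}, where the relation $\Fscr_1^*\L'=d\L$ modulo vertical yields exactly your one-step bound in geometric language.) With these two corrections your argument coincides with the paper's.
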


\begin{proof}
    We first establish the height inequality \eqref{heightinequality1}.
    Viewing the family of maps as a single map over the function field $\mathbb{L}\coloneqq \Qbar(\B)$, Kawaguchi~\cite[Theorem 2.1]{KawaguchiAffine} showed that there exist a morphism $\overline{\pi}_2\colon Y \to \P_{\mathbb{L}}^2$, obtained by successive blow ups of points at infinity, along with two other morphisms $\overline{\pi}_1$ and $\overline{\pi}_3$, such that the following diagram commutes:
\begin{equation*}
\begin{tikzcd}
             & Y \arrow[d, "\overline{\pi}_2"] \arrow[rd, "\overline{\pi}_3"] \arrow[ld, "\overline{\pi}_1"']    &              \\
\mathbb{P}_\mathbb{L}^2 & \mathbb{P}_\mathbb{L}^2 \arrow[l, "f^{-1}"', dashed] \arrow[r, "f", dashed] & \mathbb{P}^2_\mathbb{L}
\end{tikzcd}    
\end{equation*}
Moreover, denoting by $H_\infty$ the line at infinity, the divisor 
\begin{align}\label{effective}
    D\coloneqq\overline{\pi}_3^{*}H_\infty + \overline{\pi}_1^{*}H_\infty - (d+\frac{1}{d})\overline{\pi}_2^{*}H_\infty
\end{align}
is effective.
Hence, up to shrinking $\Lambda$, there exists a smooth projective model $\pi_\Y \colon\Y \to \B$ of $Y$ (i.e. $\Y$ is smooth projective, $\pi_\Y$ is surjective, and the generic fiber of $\pi_\Y$ is isomorphic to $Y$), with three rational map $\Pi_i$, regular over $\Lambda$, and extending $\pi_i$.
Let $\Y'$ be the desingularization of the image of the map $(\Pi_1, \Pi_2, \Pi_3)\colon\Y \dashrightarrow \B\times(\P^2)^3$. Denote by $\Theta_i$ the $i-$th projection to $\B\times \P^2$ and let $\pi_{\Y'}\coloneqq \pi \circ \Theta_2$. Consider the divisor
\begin{align*}
    \D\coloneqq \Theta_3^{*}(B\times H_\infty) + \Theta_1^{*}(B\times H_\infty) - (d+\frac{1}{d})\Theta_2^{*}(B\times H_\infty).
\end{align*}
Up to shrinking $\Lambda$ and taking a large multiple of $\M$, $\D+\pi_{\Y'}^*\M$ is an effective divisor supported outside $\Theta_2^{-1}(\Lambda\times \A^2)$. Up to taking a finite field extension of $\K$, we can assume all the varieties and divisors above are defined over $\K$.
Since the Weil height associated to an effective divisor is bounded below outside of the base locus of its linear system, there exists a positive constant $C >0$ such that
\begin{align*}
    (1+\frac{1}{d^2})h(x) \leq \frac{1}{d}h(f_t(x)) + \frac{1}{d}h(f_t^{-1}(x))  +\frac{C}{d}(h_\M(t)+1)
\end{align*}
for all $(t,x)\in \Lambda\times \A^2(\Qbar)$.
By induction, for all positive integers $n\geq 1$, 
\begin{align*}
    (1+\frac{1}{d^{2^n}})h(x)\leq \frac{1}{d^{2^n}}h(f_t^{2^n}(x)) + \frac{1}{d^{2^n}}h(f_t^{-2^n}(x)) +a_n C(h_\M(t)+1),
\end{align*}
where $a_1 = 1/d$ and $a_{n+1}=(1+2/d^{2^{n-1}} + 1/d^{2^n})a_n$. Passing to the limit $n\to +\infty,$ we obtain finally that $h(x)\leq \hat{h}^+_{f_t}(x) + \hat{h}^-_{f_t}(x) + C_1(h_\M(t)+1),$ where $C_1 = C \lim_{n\to +\infty}a_n.$
\newline

We now prove the other height inequality \eqref{heightinequality2}.
Let $\X$ be the desingularization of the graph of the rational map $\Fscr_1\colon \B\times \P^2 \dashrightarrow \B\times \P^4$. We denote by $\Phi_1$ the projection $\X\to \B\times \P^2$ and by $\Gscr_1$ the projection $\X\to \B\times \P^4$. Denote by $\L\coloneqq\O_{\B \times \P^2}(1)$ and $\L'\coloneqq\O_{\B \times \P^4}(1)$. There exists a vertical divisor $\Vscr$ such that $\frac{1}{d}\Gscr_1^*\L' - \Phi_1^*\L = \O_\X(\Vscr),$ so that
\begin{align*}
    -m\O_\X(\pi_\X^* \Mscr) \leq \frac{1}{d} \Gscr_1^*\L' - \Phi_1^*\L \leq m\O_\X(\pi_\X^* \Mscr)
\end{align*}
for some large integer $m>0$. Up to 
shrinking $\Lambda,$ We can assume that $\supp(\Mscr) \subset \B\setminus\Lambda,$ so that for any $t\in \Lambda(\Qbar)$ and $z\in \Lambda \times \A^2(\Qbar)$ with $\pi(z)=t,$ we have, by Weil's height machine,
\begin{align*}
    |\frac{1}{d}h_{\L'}(\Fscr_1(z)) - h_{\L}(z)|\leq m h_{\Mscr}(t) + O(1).
\end{align*}
In particular, there exists a positive constant $C>0$ such that for all $(t,x)\in \Lambda\times \A^2(\Qbar),$ we have
\begin{align*}
    \frac{1}{d}h(f_t(x)) \leq h(x)+mh_\Mscr(t)+O(1).
\end{align*}
By induction, for all $n\in \N^*,$ we have
\begin{align}\label{inequality2}
    \frac{h(f_t^{n+1}(x))}{d^{n+1}} \leq \frac{h(f_t^n(x))}{d^n}+\frac{mh_\Mscr(t)}{d^n}+\frac{O(1)}{d^n}\leq h(x)+m(\sum_{i=0}^n \frac{1}{d^i})h_\Mscr(t)+(\sum_{i=0}^n \frac{1}{d^i})O(1).
\end{align}
Passing to the limit $n\to +\infty$, there exists a positive constant $C^+ >0$ such that
\begin{align*}
    \hat{h}_{f_t}^+(x) \leq h(x) + C^+(h_\Mscr(t) + 1).
\end{align*}
Similarly, iterating backwards, there exists a positive constant $C^- >0$ such that
\begin{align*}
    \hat{h}_{f_t}^+(x) \leq h(x) + C^-(h_\Mscr(t) + 1).
\end{align*}
Hence denoting by $C\coloneqq C^+ + C^-,$ we get
\begin{align*}
    \hat{h}_{f_t}(x)\leq 2h(x)+ 2C(h_\Mscr(t) + 1)
\end{align*}
for all $(t,x)\in \Lambda\times \A^2(\Qbar)$.
\end{proof}

\subsection{Height inequalities on non-degenerate families of curves}
\begin{lem}\label{height inequality curve}
    Let $\Cscr\subset \Lambda \times \A^2$ be a non-degenerate family of curves defined over a number field~$\K$. Let $\M$ be any ample $\Q$-line bundle on $\B$. Then there exist positive constants $C_3, C_4 >0$, a non-empty Zariski open subset $\Cscr^0$ of $\Cscr$ such that for all $(t,x)\in\Cscr^0(\Qbar)$, we have
    \begin{align*}
        h_{ \M}(t) \leq C_3 \hat{h}_{f_t}(x) +C_4.
    \end{align*}

\end{lem}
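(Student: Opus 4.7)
The plan is to interpret $\hat{h}_f$ restricted to $\Cscr$ as the height attached to a semi-positive adelic $\mathbb{Q}$-line bundle in the framework of Yuan--Zhang~\cite{YZadelic}, and then to apply Zhang's essential minimum inequality to a small negative twist by $\pi^{*}\overline{\M}$.

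First, I would build a semi-positive adelic $\mathbb{Q}$-line bundle $\overline{\L}_f$ on a suitable projective compactification of $\Lambda\times\A^2$ such that the associated height function on $\Cscr(\Qbar)$ agrees with $\hat{h}_f$ up to a bounded error. At the fixed archimedean place $v$ the metric is obtained from the continuous plurisubharmonic function $G_f=\max\{G_f^{+},G_f^{-}\}$; the logarithmic degeneration estimate~\eqref{degeneration estimate} from the proof of Proposition~\ref{hieghtIntersection} ensures that this metric extends to a genuine integrable adelic metric across the boundary $\B\setminus\Lambda$. The non-archimedean places are handled by standard models of $f$, and the Call--Silverman-type estimates of Lemma~\ref{heightinequalitylemma1} identify the resulting height with $\hat{h}_f$ on points, uniformly in $t\in\Lambda(\Qbar)$.

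Second, Proposition~\ref{hieghtIntersection} together with the standard Yuan--Zhang formula for arithmetic intersection numbers on fibrations over curves shows that the arithmetic self-intersection of $\overline{\L}_f|_{\Cscr}$ is a positive multiple of $\Tilde{h}_f(\Cscr)$, hence strictly positive by the non-degeneracy hypothesis. By continuity of arithmetic intersection under small perturbation, the twist $\overline{N}_\epsilon\coloneqq \overline{\L}_f-\epsilon\,\pi^{*}\overline{\M}$ remains arithmetically big when restricted to $\Cscr$ for some $\epsilon>0$ sufficiently small, and its generic fiber degree on $\Cscr$ stays positive.

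Finally, arithmetic bigness of $\overline{N}_\epsilon|_{\Cscr}$ combined with Zhang's essential minimum inequality (in the adelic form of~\cite{YZadelic}) yields a finite lower bound $e_1(\overline{N}_\epsilon|_{\Cscr})\geq -M$. Unwinding the definition of the essential minimum, there exists a non-empty Zariski open subset $\Cscr^0\subset\Cscr$ such that for every $(t,x)\in\Cscr^0(\Qbar)$,
\begin{align*}
    \hat{h}_{f_t}(x)-\epsilon\,h_\M(t)\geq -M,
\end{align*}
which gives the desired inequality with $C_3=1/\epsilon$ and $C_4=M/\epsilon$. The main obstacle will be the first step: verifying rigorously that $(\overline{\L}_f,\text{metric from }G_f)$ defines a legitimate adelic $\mathbb{Q}$-line bundle in the Yuan--Zhang sense on the quasi-projective variety $\Lambda\times\A^2$ and matches the chosen algebraic model at the boundary closely enough that the arithmetic intersection numbers used in the second step are well-defined and controlled by $\Tilde{h}_f(\Cscr)$. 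The uniform logarithmic bound~\eqref{degeneration estimate} is the key input that makes this compatibility possible.
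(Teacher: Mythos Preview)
Your overall strategy---pass to an adelic line bundle $\overline{\L}_f$ with $h_{\overline{\L}_f}=\hat h_f$, twist by $-\epsilon\,\pi^*\overline{\M}$, and read off the height inequality from a lower bound on the essential minimum---is a legitimate alternative (it is essentially the route of \cite[Theorem~6.2.2]{YZadelic} that the paper cites just before its own proof). But Step~2 has a real gap. Proposition~\ref{hieghtIntersection} computes the \emph{geometric} self-intersection $(\L_f|_{\Cscr})^{2}=\Tilde h_f(\Cscr)$, not the \emph{arithmetic} top self-intersection $(\overline{\L}_f|_{\Cscr})^{3}$; there is no ``standard formula'' identifying the two. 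For a nef adelic line bundle, the arithmetic volume equals the arithmetic top intersection and can perfectly well vanish even when the geometric volume is positive (indeed $\hat h_f(\Cscr)=0$ is compatible with $\Tilde h_f(\Cscr)>0$). So ``arithmetic bigness of $\overline{N}_\epsilon$ by continuity'' does not follow, and with it your appeal to Zhang's inequality collapses. What non-degeneracy actually buys you is \emph{geometric} bigness of $\L_f-\epsilon\,\pi^*\M$ on $\Cscr$ for small $\epsilon$; from a nonzero section of a large power you get the height inequality on the complement of its zeros by the usual Weil machinery, without ever touching arithmetic volumes.

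The paper's proof is exactly this geometric-bigness argument, but done at a \emph{finite} level so as to avoid building $\overline{\L}_f$ altogether. One chooses $N$ large enough that on the projective model $\Cscr_N\subset\B\times\P^4$ the ratio $({\L'}^{2}\cdot\Cscr_N)/(\L'\cdot\pi^*\M\cdot\Cscr_N)$ dominates a fixed constant (this is where $\Tilde h_f(\Cscr)>0$ enters, via Proposition~\ref{hieghtIntersection}); Siu's numerical criterion then gives $\L'-M\pi^*\M$ big on $\Cscr_N$ for a suitable $M>0$, hence $h'(\Fscr_N(x))\ge Mh_\M(t)+O(1)$ on a Zariski open $\Cscr^0$. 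Finally one converts $h'(\Fscr_N(x))$ back into $\hat h_{f_t}(x)$ using the Call--Silverman type estimate~\eqref{heightinequality1}, absorbing the error $C_1 d^{-N}h_\M(t)$ into the left side by taking $N$ large. This bypasses both obstacles you flag: no adelic limit object is needed, and only classical (not arithmetic) bigness is used.
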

See \cite[Theorem 6.2.2]{YZadelic} (or \cite[Theorem 5.4]{gauthier2023sparsity}) for the case of families of polarized endomorphisms.
\begin{proof}
    Since $\mu_{\Cscr}\neq 0$, by Proposition \ref{hieghtIntersection}, for a general parameter $t'$
    \begin{align*}
        \lim_{n \to \infty} \frac{d^{-2n} \Cscr_n \cdot {\L'}^2}{d^{-n} \Cscr_n \cdot \L \cdot \pi^*\M} = \frac{\Tilde{h}(\Cscr)}{\deg_{\L'|_{\Cscr_{t'}}}(\Cscr_{t'})\deg{\M}}\eqqcolon C > 0.
    \end{align*}
    Thus there exists a positive integer $N\geq 0$ such that for all $n\geq N$,
    \begin{align*}
        \frac{\Cscr_n\cdot {\L'}^2}{\Cscr\cdot \L'\cdot \pi^*\Mscr} > \frac{Cd^n}{2}.
    \end{align*}
Set $M\coloneqq Cd^N/4,$ we have
\begin{align*}
    \frac{\Cscr_N\cdot {\L'}^2}{2\Cscr_N\cdot \L'\cdot M\pi^*\Mscr}> 1.
\end{align*}
By Siu's numerical criterion for bigness \cite[Theorem 2.2.15]{Lazarsfeld}, $\L' - M\pi^*\M$ is big on $\Cscr_N$. Thus there exists a non-empty Zariski open subset $\Uscr$ of $\Cscr_N$ such that, for all $(t,z)\in \Uscr(\Qbar)$,
    \begin{align*}
        h'(z)-Mh_\M(t)\geq O(1).
    \end{align*}
Hence there exists a non-empty Zariski open subset $\Cscr^0$ of $\Cscr$ such that for all $(t,x)\in \Cscr^0(\Qbar),$
\begin{align}\label{heightinequality3}
    Mh_\M(t)\leq h'\left(\Fscr_N(x)\right) +O(1) \leq h\left(f^N(x)\right)+h\left(f^{-N}(x)\right).
\end{align}
Now, the height inequality \eqref{heightinequality1} implies 
\begin{align}\label{heightinequality4}
    h\left(f^N(x)\right)+h\left(f^{-N}(x)\right) \leq \left(d^N+\frac{1}{d^{N}} \right)\hat{h}_{f_t}(x)+ C_1\left(h_\M(\pi(x))+1\right).
\end{align}
Combining \eqref{heightinequality3} and \eqref{heightinequality4}, there exists a positive constant $C'>0$ such that
\begin{align*}
    \left(\frac{C}{2} - \frac{C_1}{d^N}\right)h_\M(\pi(x))  \leq \left(1+\frac{1}{d^{2N}}\right)\hat{h}_{f_t}(x)+C'. 
\end{align*}
Choose $N$ large enough so that $\frac{C}{2} > \frac{C_1}{d^N}$, we have the desired inequality.
\end{proof}

\section{Equidistribution in families of plane regular polynomial automorphisms}\label{sectionequidistribution}
\subsection{Adelic line bundles on projective varieties}

In this section we give a quick review of the theory of adelic line bundles on projective varieties of Zhang (\cite{zhangJAG1995,ZHANGjams}, see also \cite{ICCMYuan, chambertloir2010heights}). Other references are given in the text.

\subsubsection{Definitions}
Let $\K$ be a number field. Let $M_\K$ be the set of its places, i.e. the set of absolute values on $\K$ whose restriction to $\Q$ are the usual absolute values on $\Q.$ For any $v\in M_\K,$ denote by $\K_v$ the completion of $\K$ w.r.t. $v$ and by $\C_v$ the completion of an algebraic closure of $\K_v$. If $v$ extends $p$, i.e. for any $x\in \Q,$ it holds $|x|_v = |x|_p$, then denote by $n_v\coloneqq[\K_v:\Q_p]$. For any $x\in \K$, we have the product formula $\sum_{v\in M_\K}n_v \log|x|_v=0.$ 

Let $X$ be a smooth projective variety defined over $\K.$ Let $X^\an_v$ be the Berkovich analytification of $X $ over $\C_v.$ Let $L$ be a line bundle on $X,$ and denote by $L^\an$ its analytification. 

A \emph{continuous metric}  $\lVert  \cdot \rVert_v$ on $L^\an_v$ is the data for any open subset $U\subset X^\an_v$ and any section $\sigma$ on $U$ of a continuous function $\lVert \sigma \rVert_v \colon U\to \R_+$ such that
\begin{enumerate}
    \item $\lVert \sigma \rVert_v$ vanishes only at the zeros of $\sigma;$
    \item for any open subset $V\subset U,$ $\lVert \sigma \rVert_v|_V = \lVert \sigma|_V \rVert_v$;
    \item for any analytic function $f$ on $U,$ $\lVert f\sigma \rVert_v = |f|_v \lVert \sigma \rVert_v.$
\end{enumerate}
An \emph{arithmetic model} $(\X,\L,e)$ is a  triple where $\X$ is a proper flat scheme over the ring of integers $\O_\K$ of $\K$ such that $\X_\K\simeq X$ , $e\in \N^*$ and $\L^e_\K \simeq L.$ It induces naturally the \emph{model metric} (see \cite[(1.1)]{zhangJAG1995}) at each non-archimedean place. An \emph{adelic metric}  $\overline{L}\coloneqq(L,\{\lVert \cdot \rVert_v\}_{v\in M_\K})$ on $L$ is a collection of continuous metric at each place $v,$ satisfying some coherent conditions, that is, there exists an arithmetic model $\X$ such that for all but finitely many non-archimedean places $v\in M_\K,$ the metric $\lVert \cdot \rVert_v$ is the model metric induced by $\X.$ 

To any adelic metric $\overline{L},$ we can associate a height function as follows. Let $x\in X(\Qbar)$, denote by $O_v(x)$ its Galois orbit in $X^\an_v$, let $\sigma$ be any rational section of $L$ regular and non-vanishing at $x,$ then
\begin{align*}
    h_{\overline{L}}(x) \coloneqq \frac{-1}{\deg(x)}\sum_{v\in M_\K} n_v\sum_{z\in O_v(x)}\log \lVert \sigma(z) \rVert_v.
\end{align*}

\subsubsection{Positivities of adelic line bundles}
Let $\overline{\L}$ be a Hermitian metric on an arithmetic model $\X$, we say that $\overline{\L}$ is \emph{semipositive} if $\L$ is relatively nef and the curvature form $c_1(\L_\C)$ is semipositive.
An adelic metric $\overline{L}$ is said to be \emph{semipositive} if it is an uniform limit of a sequence of semipositive hermitian metrics, \emph{nef} if moreover $h_{\overline{L}} \geq 0$, and integrable if it is the difference of two nef adelic line bundles. 
\begin{ex} \normalfont \label{standradmetric}
    Let $X=\P^n_\K$ with coordinates $z_i$ and $L=\O(1)$. Let $\sigma \in H^0(X,L)$ be a section, it can be represented by a linear form $\sum_{i=1}^n a_iz_i,$ where $a_i \in \K$. Then at each place $v\in M_\K,$ we can define a metric by setting
    \begin{align*}
        \lVert \sigma(z_0,\cdots,z_n)\rVert_v = \frac{|\sum_{i=1}^n a_iz_i|_v}{\max\{|z_0|_v,\cdots,|z_n|_v \}}.
    \end{align*}
    The adelic metric $\overline{L} = (L, \{\lVert \cdot \rVert\}_v)$ is semipositive, and we call it the \emph{standard metric} on the line bundle $L$. In fact, it's associated height function $h_{\overline{L}}$ is the \emph{standard Weil height function}, i.e. for any $z\in X(K)$,
    \begin{align*}
        h_{\overline{L}}(z) = \frac{1}{[\K:\Q]}\sum_{v\in M_\K} n_v \log \max\{|z_0|_v,\cdots,|z_n|_v\}.
    \end{align*}
\end{ex}
Denote by $\widehat{\mathrm{Pic}}(X)_{\mathrm{int}}$ the group of integrable adelic line bundles. For any positive integer $d\geq 0,$ denote by $Z_d(X)$ the group of Chow cycles of dimension $d$. Then
we have an intersection pairing $\widehat{\mathrm{Pic}}(X)_{\mathrm{int}}^{d+1} \times Z_d(X) \to \R$. We write the product as $\overline{L}_0 \cdots \overline{L}_d\cdot Y$. If $Y=X,$ then we can simply denote it by $\overline{L}_0 \cdots \overline{L}_d$

Following (\cite{ZHANGjams,zhangJAG1995}), we define the absolute minimum of $\overline{L}$ by $e_{\mathrm{abs}}(\overline{L})\coloneqq\inf_{x\in X(\Qbar)}h_{\overline{L}}(x)$,
and the essential minimum of $\overline{L}$ by $e_{\mathrm{ess}}(\overline{L})\coloneqq\sup_{Y\not\subset X}\inf_{x\in X\setminus Y(\Qbar)}h_{\overline{L}}(x)$,
where the supremum is taken over all closed proper subvarieties $Y$. The following fundamental inequalities are due to S.-W. Zhang.
\begin{thm}[{\cite[Theorem (1.10)]{zhangJAG1995}}]\label{zhanginequality}
    If $\overline{L}$ is semipositive and $L$ is big and nef, then
    \begin{align*}
        e_{\mathrm{ess}}(\overline{L})\geq \frac{\overline{L}^{\mathrm{dim}(X)+1}}{(\mathrm{dim}(X)+1)D^{\mathrm{dim}(X)}} \geq \frac{1}{\mathrm{dim}(X)+1}(e_{\mathrm{ess}}(\overline{L}) + \mathrm{dim}(X)e_{\mathrm{abs}}(\overline{L})).
    \end{align*}
    In particular, if $e_{\mathrm{abs}}(\overline{L})\geq 0$, then $\overline{L}^{\mathrm{dim}(X)+1} \geq 0.$
\end{thm}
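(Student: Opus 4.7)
The plan is to combine the arithmetic Hilbert--Samuel theorem with a successive-minima argument. Write $n=\dim(X)$ and interpret $D=L^n$ (the geometric degree of $X$ with respect to $L$). For $i=1,\ldots,n+1$, define the $i$-th successive minimum
$$e_i(\overline{L})\coloneqq \sup_{\mathrm{codim}\,Z\geq i}\;\inf_{x\in(X\setminus Z)(\Qbar)} h_{\overline{L}}(x),$$
so that $e_{n+1}=e_{\mathrm{abs}}(\overline{L})$, $e_1=e_{\mathrm{ess}}(\overline{L})$, and the $e_i$ are decreasing. The key intermediate claim is the two-sided bound
$$e_{\mathrm{ess}}(\overline{L})\;\geq\;\frac{\overline{L}^{n+1}}{(n+1)\,D}\;\geq\;\frac{1}{n+1}\sum_{i=1}^{n+1} e_i(\overline{L}).$$
Granting this, the right inequality of the theorem follows at once: since $e_i\geq e_{\mathrm{abs}}$ for $i\geq 2$ and $e_1=e_{\mathrm{ess}}$, one has $\sum_i e_i\geq e_{\mathrm{ess}}+n\cdot e_{\mathrm{abs}}$.

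For the lower bound $\overline{L}^{n+1}\geq D\sum_i e_i$, I would invoke the arithmetic Hilbert--Samuel theorem of Gillet--Soul\'e--Zhang: for $m$ large, the number of global sections of $mL$ of sup-norm at most $1$ grows like $\overline{L}^{n+1}m^{n+1}/(n+1)!$. The strategy is then to choose a flag $X=Y_0\supsetneq Y_1\supsetneq\cdots\supsetneq Y_n$ of subvarieties such that each $Y_{i-1}\setminus Y_i$ approximately realizes the supremum defining $e_i$, twist $\overline{L}$ by $-(e_i-\varepsilon)$ at the archimedean place at each step, and use restriction to $Y_i$ to split off a contribution $e_i\cdot (L^n_{Y_{i-1}})$ from the leading term of $\hat{\chi}(m\overline{L})$. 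Summing these contributions gives the desired sum formula; the technical obstacle here is controlling the restriction maps on spaces of strictly small sections along the flag, which requires the arithmetic analogue of the geometric fact that $h^0(mL)=Dm^n/n!+O(m^{n-1})$ together with a norm estimate at each restriction step.

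For the upper bound $e_{\mathrm{ess}}(\overline{L})\geq \overline{L}^{n+1}/((n+1)D)$, I would argue by contradiction: after twisting $\overline{L}$ by a real constant $c$ at the archimedean place, reduce to the case $\overline{L}^{n+1}>0$ while $e_{\mathrm{ess}}(\overline{L})<0$, so that points of strictly negative height form a Zariski-dense subset of $X(\Qbar)$. Positivity of the top self-intersection combined with bigness of $L$ forces, via the arithmetic Hilbert--Samuel formula applied again (or an arithmetic Siu-type inequality), the existence for some $m\gg 1$ of a strictly small global section $\sigma$ of $mL$. Such a section gives a pointwise lower bound $h_{\overline{L}}(x)\geq 0$ for every $x\in X(\Qbar)$ outside the zero locus of $\sigma$, via the product formula applied to $\sigma(x)$, contradicting Zariski-density of points of negative height. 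The main obstacle in this half is producing strictly small sections in the big-and-nef (as opposed to ample) regime with merely semipositive continuous metrics: this forces one to work with the full arithmetic intersection theory of Zhang (rather than the smooth Arakelov/Gillet--Soul\'e setup), approximating $\overline{L}$ by semipositive smooth models and passing to the limit.
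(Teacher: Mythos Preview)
The paper does not contain a proof of this statement: Theorem~\ref{zhanginequality} is simply quoted from \cite[Theorem (1.10)]{zhangJAG1995} as background material in the preliminary subsection on adelic line bundles, and is used later as a black box (e.g.\ in the proofs of Theorems~\ref{equidistributionpoints} and~\ref{equidistribution curves}). There is therefore no ``paper's own proof'' to compare your proposal against.

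For what it is worth, your sketch is a faithful outline of Zhang's original argument in \cite{zhangJAG1995}: the successive-minima formulation, the arithmetic Hilbert--Samuel input for the lower bound via a flag of subvarieties, and the production of a strictly small section to derive the upper bound by contradiction are exactly the ingredients Zhang uses. The caveats you flag (passing from ample/smooth to big--nef/semipositive by approximation) are also the genuine technical points of that paper. But none of this is reproduced in the present article.
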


Set $\hat{H}^0(X,\overline{L}) \coloneqq \{\sigma\in H^0(X,L) \ |\ \lVert \sigma \rVert_v\leq 1 ,\ \forall v\in \M_\K\}$ and $\hat{h}^0(X,\overline{L}) \coloneqq \log \mathrm{Card}(\hat{H}^0(X,\overline{L}))$. The \emph{arithmetic volume} of $\overline{L}$ is 
\begin{align*}
    \widehat{\vol}(\overline{L}) = \limsup_{n\to +\infty}\frac{\hat{h}^0(X,\overline{L})}{n^{\mathrm{dim}(X)+1}/(\mathrm{dim}(X)+1)!}.
\end{align*}
We say that $\overline{L}$ is \emph{big} if $ \widehat{\vol}(\overline{L})>0$ and \emph{pseudo-effective} if for any big adelically metrized line bundle $\overline{E}$, $\overline{L}+\overline{E}$ is big. 

The following theorem is proved by Balla\"{y} (\cite{ballay2021}) when $L$ is big and $\overline{L}$ is semipositive (in fact he proved the equivalence in this case), the general case is due to Qu and Yin (\cite{qu2023arithmetic}).
\begin{thm}\label{essentialmin}
    We have the implication $ e_{\mathrm{ess}}(\overline{L}) \geq 0 \implies \overline{L}$ is pseudo-effective. 
\end{thm}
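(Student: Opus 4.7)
The plan is to prove the implication by showing that $\overline{L}+\overline{E}$ is big for every big adelic line bundle $\overline{E}$. I would proceed in two stages: first handle Ballaÿ's semipositive case via Zhang's fundamental inequality, then extend to arbitrary integrable $\overline{L}$ through an arithmetic Okounkov body argument.

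First, I would establish $e_{\mathrm{ess}}(\overline{L}+\overline{E})>0$ directly from the hypothesis. A big adelic line bundle has strictly positive essential minimum: this follows because bigness of $\overline{E}$ forces $\overline{E}-\epsilon\overline{A}$ to be pseudo-effective for some $\epsilon>0$ and ample $\overline{A}$, while $e_{\mathrm{ess}}(\overline{A})>0$ by Theorem~\ref{zhanginequality} applied to $\overline{A}$ (which is semipositive with $A$ big and nef and $\overline{A}^{\dim X+1}>0$). Combined with the additivity of heights $h_{\overline{L}+\overline{E}}=h_{\overline{L}}+h_{\overline{E}}$ and the resulting superadditivity $e_{\mathrm{ess}}(\overline{L}+\overline{E})\geq e_{\mathrm{ess}}(\overline{L})+e_{\mathrm{ess}}(\overline{E})$, we get $e_{\mathrm{ess}}(\overline{L}+\overline{E})>0$.

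In the case where $\overline{L}$ is semipositive and $L$ is big (Ballaÿ's setting), $\overline{L}+\overline{E}$ remains semipositive and $L+E$ big. Applying Theorem~\ref{zhanginequality} to $\overline{L}+\overline{E}$ (after a nef perturbation if the underlying bundle is not nef), the strict positivity of the essential minimum forces
\[
(\overline{L}+\overline{E})^{\dim X+1} \geq (\dim X+1)\,e_{\mathrm{ess}}(\overline{L}+\overline{E})\cdot(L+E)^{\dim X}>0.
\]
By the arithmetic Hilbert--Samuel formula this top self-intersection equals $\volhat(\overline{L}+\overline{E})$, proving pseudo-effectivity in this case.

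The main obstacle is the general integrable case (the Qu--Yin extension), where semipositivity fails and Zhang's intersection inequality cannot be invoked directly, since the arithmetic volume and successive self-intersections decouple. My approach would be to use the arithmetic Okounkov body framework of Boucksom--Chen (as reformulated in Yuan--Zhang's theory of adelic line bundles): $\volhat(\overline{L}+\overline{E})$ is computed as the integral over a Newton--Okounkov body $\Delta$ of a concave function $G_{\overline{L}+\overline{E}}$. The hypothesis $e_{\mathrm{ess}}(\overline{L})\geq 0$ translates, via the interpretation of successive minima as level sets of the concave transform, into $G_{\overline{L}}\geq 0$ almost everywhere on $\Delta$; bigness of $\overline{E}$ provides a strictly positive additive shift on an open subset, so $G_{\overline{L}+\overline{E}}>0$ on a set of positive Lebesgue measure and $\volhat(\overline{L}+\overline{E})>0$. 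The technical heart, and the step I expect to be the hardest, is the equivalence between the essential minimum and pointwise lower bounds of the concave transform in the non-semipositive setting; this requires delicate control of the filtration of sections producing the Okounkov body, and cannot rely on model-metric approximations alone.
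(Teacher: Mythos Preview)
The paper does not give its own proof of this statement: it is quoted as a black-box result, attributed to Balla\"y \cite{ballay2021} in the semipositive case and to Qu--Yin \cite{qu2023arithmetic} in general. So there is no ``paper's proof'' to compare against; I can only assess whether your sketch stands on its own.

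There is a genuine gap in your semipositive argument. You write
\[
(\overline{L}+\overline{E})^{\dim X+1} \;\geq\; (\dim X+1)\,e_{\mathrm{ess}}(\overline{L}+\overline{E})\cdot(L+E)^{\dim X},
\]
but this is the \emph{reverse} of Zhang's inequality (Theorem~\ref{zhanginequality}), which reads
\[
e_{\mathrm{ess}}(\overline{M}) \;\geq\; \frac{\overline{M}^{\dim X+1}}{(\dim X+1)\,M^{\dim X}}.
\]
Zhang's inequality bounds the top self-intersection from \emph{above} by the essential minimum, not from below; it lets you deduce $e_{\mathrm{ess}}>0$ from $\volhat>0$, but not the other way around. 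The implication you need --- $e_{\mathrm{ess}}>0$ forces $\volhat>0$ --- is precisely the hard content of Balla\"y's theorem, and it does not follow from Theorem~\ref{zhanginequality}. Balla\"y's actual proof goes through arithmetic Okounkov bodies already in the semipositive case, relating $e_{\mathrm{ess}}$ to the maximum of the concave transform and bigness to positivity of its integral.

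Your first step also has a circularity risk: to get $e_{\mathrm{ess}}(\overline{E})>0$ from bigness of $\overline{E}$ via ``$\overline{E}-\epsilon\overline{A}$ is pseudo-effective,'' you implicitly use that pseudo-effectivity implies $e_{\mathrm{ess}}\geq 0$, which is the converse of what you are proving. This converse is indeed true and not difficult, but it should be stated and proved separately, or you should argue bigness $\Rightarrow e_{\mathrm{ess}}>0$ directly from the existence of small sections.

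Your proposed route for the general integrable case --- via the concave transform on an arithmetic Okounkov body, showing $e_{\mathrm{ess}}\geq 0$ forces the transform to be nonnegative and then exploiting the positive shift from $\overline{E}$ --- is in spirit the Qu--Yin approach, and you correctly identify the delicate point (relating $e_{\mathrm{ess}}$ to the supremum of the transform without semipositivity). But since the semipositive case already requires this machinery rather than Zhang's inequality, the two-stage structure of your sketch collapses: both cases need essentially the same Okounkov-body argument.
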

The following positivity theorem is due to A. Moriwaki.
\begin{prop}[{\cite[Proposition 4.5.4]{MoriwakiRdivisor}}]\label{pesudoeffective}
    If $\overline{L}_1,\dots,\overline{L}_{\mathrm{dim}(X)}$ are nef and $\overline{L}_{\mathrm{dim}(X)+1}$ is pseudo-effective, then $\overline{L}_1\cdots\overline{L}_{\mathrm{dim}(X)+1} \geq 0.$
\end{prop}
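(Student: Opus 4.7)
The strategy is to reduce, by a perturbation argument exploiting the very definition of pseudo-effectivity, to showing that the intersection of a \emph{big} adelic line bundle with $\dim(X)$ nef adelic line bundles is non-negative; then to prove this latter statement by induction on $\dim(X)$ using strictly small global sections.

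Set $n = \dim(X)$ and fix an ample adelic line bundle $\overline{A}$ on $X$ (for instance the standard metric on a very ample line bundle, as in Example~\ref{standradmetric}). By the definition of pseudo-effectivity of $\overline{L}_{n+1}$, for every rational $\varepsilon > 0$ the adelic line bundle $\overline{M}_\varepsilon \coloneqq \overline{L}_{n+1} + \varepsilon \overline{A}$ is big. Multilinearity of the integrable intersection pairing gives
\[
\overline{L}_1 \cdots \overline{L}_n \cdot \overline{M}_\varepsilon \;=\; \overline{L}_1 \cdots \overline{L}_{n+1} \;+\; \varepsilon\, \overline{L}_1 \cdots \overline{L}_n \cdot \overline{A}.
\]
If the left-hand side is $\geq 0$ for every $\varepsilon > 0$, letting $\varepsilon \to 0^+$ yields $\overline{L}_1 \cdots \overline{L}_{n+1} \geq 0$, since $\overline{L}_1 \cdots \overline{L}_n \cdot \overline{A}$ is a fixed finite real number. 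It therefore suffices to prove the following claim: if $\overline{M}$ is big and $\overline{N}_1, \dots, \overline{N}_n$ are nef, then $\overline{M} \cdot \overline{N}_1 \cdots \overline{N}_n \geq 0$.

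I would prove this claim by induction on $n$. Bigness of $\overline{M}$ produces, for some integer $m \geq 1$, a non-zero strictly small section $\sigma \in \hat{H}^0(X, m\overline{M})$, i.e.\ $\lVert \sigma \rVert_v \leq 1$ at every place. The arithmetic Poincar\'e--Lelong formula then decomposes
\[
m\, \overline{M} \cdot \overline{N}_1 \cdots \overline{N}_n \;=\; \overline{N}_1 \cdots \overline{N}_n \cdot \mathrm{div}(\sigma) \;+\; \sum_{v \in M_\K} n_v \int_{X_v^{\mathrm{an}}} (-\log \lVert \sigma \rVert_v)\, c_1(\overline{N}_1)_v \wedge \cdots \wedge c_1(\overline{N}_n)_v.
\]
Each integral in the adelic sum is non-negative: $-\log \lVert \sigma \rVert_v \geq 0$ by smallness of $\sigma$, and the Chambert-Loir/curvature measure $c_1(\overline{N}_1)_v \wedge \cdots \wedge c_1(\overline{N}_n)_v$ is a positive measure by semipositivity of the nef metrics. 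Writing $\mathrm{div}(\sigma) = \sum_i a_i D_i$ with $a_i > 0$ and $D_i$ irreducible of dimension $n-1$, the algebraic term equals $\sum_i a_i \, \overline{N}_1|_{D_i} \cdots \overline{N}_n|_{D_i}$, and each factor $\overline{N}_j|_{D_i}$ is nef; hence each summand is $\geq 0$ by the induction hypothesis applied to $D_i$. The base case $n = 0$ is immediate from the definitions (or from Theorem~\ref{zhanginequality} for a zero-dimensional base).

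The main obstacle is executing these steps rigorously in the adelic framework: one must extend the Poincar\'e--Lelong identity from model metrics to arbitrary adelic metrics obtained as uniform limits; verify positivity of the Chambert-Loir measures attached to nef metrics at non-archimedean places (classical for semipositive model metrics, and transported to uniform limits with a bit of care); and check that nefness restricts well, namely that $\overline{N}|_{D}$ is nef when $\overline{N}$ is nef on $X$ and $D \subset X$ is a closed subvariety, which requires both controlling the uniform limit of semipositive models compatibly with $D \hookrightarrow X$ and bounding the absolute minimum on $D$ from below in terms of that on $X$. With these standard but technical adelic analogues of complex-geometric positivity in place, the reduction above concludes the proof.
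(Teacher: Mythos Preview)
The paper does not supply its own proof of this proposition; it is quoted directly from Moriwaki. Your outline is essentially Moriwaki's argument: reduce pseudo-effective to big by perturbing with $\varepsilon\overline{A}$, then use a small section of a high multiple of the big bundle together with the arithmetic restriction formula to cut dimension.

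There is, however, a genuine gap in the induction as you have set it up. After restricting to a component $D_i$ of $\mathrm{div}(\sigma)$, the quantity to control is $\overline{N}_1|_{D_i}\cdots\overline{N}_n|_{D_i}$, a product of $n$ \emph{nef} adelic line bundles on a variety of dimension $n-1$; none of them is assumed big, so your inductive hypothesis ``big $\times$ nef$^{\,n-1} \geq 0$'' does not apply directly. The fix is to run the induction on the statement ``a product of $\dim+1$ nef adelic line bundles is $\geq 0$'': at each step perturb one nef factor by $\varepsilon\overline{A}$ to make it big, produce the small section, apply the (now matching) inductive hypothesis to $D_i$, and let $\varepsilon\to 0$; your initial reduction then finishes the proposition. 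The remaining technical points you flag---extending the arithmetic Poincar\'e--Lelong identity and the positivity of Chambert-Loir measures from model metrics to uniform limits, and stability of nefness under restriction---are exactly the inputs Moriwaki establishes.
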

To finish these two preliminary subsections, we remark that the theory works also for $\Q$-line bundles and all the line bundles we consider in this text is rational unless explicitly stated to the contrary.

\subsection{Equidistribution on quasi-projective varieties}
Let $V$ be a smooth quasi-projective variety over a number field $\K$ of dimension $k$. Fix an archimedean place $v\in M_\K$. We define an $v$-\emph{adic equidistribution model} $\mathrm{EQ}_v\big(V,(V_n)_{n\geq 0},(\Psi_n)_{n\geq 1},(\overline{L}_n)_{n\geq 1}\big)$
(or $\mathrm{EQ}(V)$ if it is clear from the context) of $V$ by the following data:
\begin{itemize}
    \item $V_n$ is a projective variety over $\K$.
    \item There exists an open immersion $\imath_n \colon V \to V_n$ such that $\Psi_n \colon V_n \to V_0$ is a birational morphism which is an isomorphism on $V.$
    \item $L_n$ is a big and nef line bundle on $V_n,$ endowed with an semipositive adelic metric $\overline{L}_n.$
\end{itemize}
We say that $\mathrm{EQ}(V)$ is 
\begin{itemize}
    \item \emph{non-degenerate} if we have $\lim_{n\to +\infty}\vol(L_n) >0$, and the sequence of probability measures $\vol(L_n)^{-1}(\Psi_n)_* c_1(\overline{L}_n^k)$ converges weakly to some probability measure $\mu_v$ on $V^{\an}_v$.
    \item \emph{bounded} if either $k=1$, or $k>2$ and for any ample line bundle $E$ on $V_0$ with a nef adelic metrization $\overline{E}$, there exists a positive constant $C>0$ such that 
\begin{align*}
    \big(\Psi_n^*(\overline{E}) \big)^j \cdot (\overline{L}_n)^{k+1-j} \leq C,
\end{align*}
for any $2\leq j \leq k$ and any $n\geq 1.$ 
\end{itemize}

Given a sequence $x_m \in V(\Qbar)$, it is  called \emph{generic} if for any subvariety $W$ of $V$, there exists some integer $N\geq 1$ such that, for all $m\geq N$, $x_m \notin W(\Qbar).$ For any generic sequence $x_m$, we say that it is \emph{$\mathrm{EQ}(V)$-small} if 
\begin{align*}
    \lim_{n\to +\infty}\Big( \limsup_{m\to +\infty}\big(h_{\overline{L}_n}(\Psi_n^{-1}(x_m))\big) - h_{\overline{L}_n}(V_n)\Big) = 0.
\end{align*}

The following is a reformulation by Gauthier \cite[Theorem 2]{gauthier2023good} of the equidistribution theorem of Yuan and Zhang \cite[Theorem 5.4.3]{YZadelic}:
\begin{thm}\label{equidistributionthomas}
    Let $V$ be a smooth quasi-projective variety over a number field $\K.$  Let $v\in M_\K$ be an archimedean place. Let $\mathrm{EQ}(V)$ be a non-degenerate and bounded equidistribution model of $V.$ Let $(x_m)_{m\geq 1}$ be a generic and $\mathrm{EQ}(V)$-small sequence in $V(\Qbar)$. 
    
    Then the sequence of probability measures $\mu_{x_m}$ that are uniformly supported on the Galois orbit $\gal(\overline{\Q}/\K)x_m$ of $x_m$ converges weakly to $\mu_v,$ i.e. for any continuous and compactly supported function $\varphi$ on $V^\an_v$, we have
    \begin{align*}
        \lim_{m \to +\infty} \frac{1}{\mathrm{Card}(\gal(\overline{\Q}/\K)x_m)}\sum_{y\in\gal(\Qbar/\K)x_m} \varphi(y) = \int_{V^\an_v} \varphi \mu_v.
    \end{align*}
\end{thm}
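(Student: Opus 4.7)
This is T.~Gauthier's reformulation of the Yuan--Zhang equidistribution theorem on quasi-projective varieties, so the proof is to transport the variational argument of Szpiro--Ullmo--Zhang (\cite{SUZ97,Ullmo98,ZhangEquidistribution98}), as upgraded by Yuan to higher dimension and extended to the quasi-projective setting in \cite[\S5.4]{YZadelic}, into the present language of equidistribution models. Fix a continuous compactly supported test function $\varphi$ on $V^\an_v$; by density of smooth functions it suffices to treat the case where $\varphi$ is smooth with $\supp(\varphi)\subset V$, so via $\imath_n$ one may regard $\varphi$ as a smooth compactly supported function on $V_n^\an$ for every $n$.

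Working on a fixed model $V_n$, for $\varepsilon>0$ small enough, let $\overline{L}_n(\varepsilon\varphi)$ denote the adelic metric obtained from $\overline{L}_n$ by multiplying the $v$-adic metric by $e^{-\varepsilon\varphi}$; after a standard Demailly-type regularization this remains semipositive. Applying Zhang's inequality (Theorem~\ref{zhanginequality}) to $\overline{L}_n(\varepsilon\varphi)$ yields
\begin{equation*}
    (\dim V_n+1)\deg(L_n)\, e_{\mathrm{ess}}(\overline{L}_n(\varepsilon\varphi)) \;\geq\; \overline{L}_n(\varepsilon\varphi)^{\dim V_n+1}.
\end{equation*}
A first-order expansion of the right-hand side isolates the term $(\dim V_n+1)\varepsilon \int_{V_n^\an}\varphi\, c_1(\overline{L}_n)^{\dim V_n}$. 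On the other hand, the genericity of $(x_m)$ combined with the $\mathrm{EQ}(V)$-small hypothesis furnishes the upper bound
\begin{equation*}
    e_{\mathrm{ess}}(\overline{L}_n(\varepsilon\varphi)) \;\leq\; \limsup_m\bigl(h_{\overline{L}_n}(\Psi_n^{-1}(x_m)) + \varepsilon\, \mu_{x_m}(\varphi)\bigr).
\end{equation*}
Subtracting these two inequalities, dividing by $\varepsilon$, then running the same computation with $-\varphi$ in place of $\varphi$, one squeezes $\limsup_m \mu_{x_m}(\varphi)$ and $\liminf_m \mu_{x_m}(\varphi)$ between two quantities that both equal $\vol(L_n)^{-1}\int_{V_n^\an}\varphi\, c_1(\overline{L}_n)^{\dim V_n}$ up to an error $\eta_n = h_{\overline{L}_n}(V_n) - \overline{L}_n^{\dim V_n+1}/((\dim V_n+1)\deg(L_n))$ controlled by the smallness condition.

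The second movement of the proof is to let $n\to +\infty$. By definition of $\mathrm{EQ}(V)$-smallness the residual error $\eta_n$ vanishes in the limit, while the non-degeneracy hypothesis tells us that $\vol(L_n)^{-1}(\Psi_n)_* c_1(\overline{L}_n^{\dim V_n})$ converges weakly to $\mu_v$, so pairing with $\varphi$ (whose support is contained in $V$, hence intersects $V_n\setminus V$ trivially) converges to $\int_{V_v^\an}\varphi\,\mu_v$. This gives the desired weak convergence $\mu_{x_m}\to \mu_v$.

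The main obstacle is the first-order expansion of the $(\dim V_n+1)$-fold self-intersection after an archimedean perturbation, which produces cross-terms $(\overline{L}_n)^{\dim V_n+1-j}\cdot(\Psi_n^*\overline{E})^j$ for $2\leq j\leq \dim V_n$ when one writes $\overline{L}_n(\varepsilon\varphi)$ as a difference of two nef line bundles via a suitable Demailly approximation. These cross-terms must stay uniformly bounded in $n$ for the argument to survive the limit, and this is precisely what the \emph{bounded} hypothesis on the equidistribution model is designed to guarantee (hence the terminology); the case $\dim V = 1$ is simpler because no such cross-terms arise. A secondary subtlety is the integrability of the perturbed metric and the positivity needed to invoke Theorem~\ref{zhanginequality}, both of which are handled by a Demailly regularization together with Proposition~\ref{pesudoeffective} to control pseudo-effective perturbations of nef line bundles.
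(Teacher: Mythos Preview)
The paper does not prove this theorem at all: it is stated with the preamble ``The following is a reformulation by T.~Gauthier \cite[Theorem~2]{gauthier2023good} of the equidistribution theorem of X.-Y.~Yuan and S.-W.~Zhang \cite[Theorem~5.4.3]{YZadelic}'' and then used as a black box in the proofs of Theorems~\ref{equidistributionpoints} and~\ref{equidistribution curves}. So there is no ``paper's own proof'' to compare against.

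That said, your sketch is a faithful outline of the variational argument as it is carried out in those references: perturb the archimedean metric by $e^{-\varepsilon\varphi}$, apply Zhang's fundamental inequality to the perturbed line bundle, expand the self-intersection to first order in $\varepsilon$, and use genericity plus smallness to bound the essential minimum from above. The role of the \emph{bounded} hypothesis you identify is exactly right: the higher-order terms in the expansion of $\overline{L}_n(\varepsilon\varphi)^{k+1}$ are mixed intersections that, after bounding $\ddc\varphi$ by a multiple of $c_1(E)$ for an ample $E$ on $V_0$, become the quantities $(\Psi_n^*\overline{E})^j\cdot(\overline{L}_n)^{k+1-j}$, and these must be uniformly bounded in $n$ so that the $O(\varepsilon^2)$ error is uniform before one lets $n\to\infty$. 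Likewise, the non-degeneracy hypothesis is precisely what makes the normalized currents converge to $\mu_v$ after pushing forward by $\Psi_n$. If you want to turn this into a full proof rather than a plan, the main technical points to flesh out are (i) the precise positivity/regularization step ensuring $\overline{L}_n(\varepsilon\varphi)$ is semipositive so Theorem~\ref{zhanginequality} applies, and (ii) the exact bookkeeping showing the error term you call $\eta_n$ tends to zero under the smallness hypothesis; both are handled in \cite{gauthier2023good} and \cite{YZadelic}.
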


\subsection{Equidistribution for non-isotrivial marked points}
Let $f_t$ be an algebraic family of plane regular polynomial automorphisms of degree $d\geq 2$ parameterized by a smooth complex quasi-projective curve $\Lambda$. Let $\sigma\colon\Lambda \to \A^2$ be a marked point. We say that the family $f_t$ is \emph{isotrivial} if there exist a finite morphism $\alpha\colon \Lambda' \to \Lambda$ and a family of affine polynomial automorphisms $\phi_t$ parameterized by $\Lambda'$, then the conjugate family $\phi_t\circ f_{\alpha(t)}\circ\phi_t^{-1}$ is constant, and the pair $(f_t,\alpha)$ is \emph{isotrivial} if moreover $\sigma\circ \alpha$ is constant.

\begin{thm}\label{boundedheight}
    Let $f_t$ be an algebraic family of polynomial automorphisms of H\'enon type of degree $d\geq 2$ parameterized by a smooth quasi-projective curve $\Lambda$, defined over a number field $\K.$ Let $\sigma$ be a marked point. Then 
    \begin{align*}
        \{t\in \Lambda(\Qbar) \mid \sigma(t)\ \text{is periodic for}\ f_t \}
    \end{align*}
    is a set of bounded height.
\end{thm}
\begin{proof}
Let $\Lbar$ be the pull back of the canonical line bundle $\overline{\O_{\P^2}(1)}$ with the standard adelic metrization (see Example \ref{standradmetric}) by the projection $\pi_{\P^2}\colon \B\times \P^2 \to \P^2$. For any positive integer $n\geq 1$, define an adelic metrization by $\Dbar^+_n \coloneqq 1/d^n {(f^n\circ \sigma)}^{*} \Lbar$. We may assume that there exist infinitely many periodic parameters $t_m$. By the height inequality \eqref{heightinequality1}, there is an Zariski open subset $U$ of $\Lambda$ such that, for all $t\in U(\Qbar)$, we have $h_{\Dbar^+_n}(t_m) \leq \frac{C_1}{d^n}(h_M(t_m) +1)$, and
\begin{align}\label{UpperBound}
    \limsup_{m\to +\infty}h_{\Dbar^+_n}(t_m) \leq  \limsup_{m\to +\infty} \frac{C_1}{d^n}(h_M(t_m) +1). 
\end{align}
Let $N$ be a positive integer such that $N\D_n - M$ is ample for all $n$, there exists a constant $c=c(n)$ such that
\begin{align}\label{LowerBound}
    \frac{h_M(t_m)}{N} + \frac{c}{N} \leq h_{\Dbar^+_n}(t_m).
\end{align}
If $h_M(t_m)$ is not bounded, the inequalities ($\ref{UpperBound}$) and ($\ref{LowerBound}$) would imply $N\leq C_1/d^n$ for all $n$, leading to a contradiction.
\end{proof}
\begin{rem}\label{CompareIngram}
    As mentioned in Introduction,  this result generalizes Ingram's work \cite[Theorem 1.3]{Ingram2011CanonicalHF}. In his case, Ingram established \cite[Theorem 1.1]{Ingram2011CanonicalHF} that $\hat{h}_{f_t}$ is in fact a Weil height associated with an ample line bundle. Since our primary interest lies in equidistribution results, we focused on proving height inequalities, as presented in Lemma~\ref{heightinequalitylemma1}. This approach is, in some respects, both more general and less general than \cite[Theorem 1.1]{Ingram2011CanonicalHF}.
\end{rem}

\begin{thm}\label{equidistributionpoints}
    Let $f_t$ be an algebraic family of regular plane polynomial automorphisms of degree $d\geq 2$ parameterized by a smooth quasi-projective curve $\Lambda$ defined over a number field $\K$ and $\sigma\colon\Lambda \to\Lambda\times \A^2$ a marked point defined over $\K.$ Fix an archimedean place $v$ of $\K$, so that we have an embedding $\K\hookrightarrow\C.$ Suppose the pair $(f_t,\sigma)$ is non-isotrivial and non-periodic.

    Then the following is true. If there exists a non-eventually constant sequence of parameters $t_m\in \Lambda(\Qbar)$ such that $\lim_{m\to +\infty}\hat{h}_{f_{t_m}}(\sigma(t_m))=0$, 
    then the sequence of probability measures $\mu_{t_m}$ --- that are uniformly supported on the Galois orbits $\gal(\overline{\Q}/\K)t_m$ of $t_m$ --- converges weakly to both $\mu^+_{f,\sigma}$ and $\mu^-_{f,\sigma}$ on $\Lambda(\C)$, up to some possibly different positive multiplicative constants.
\end{thm}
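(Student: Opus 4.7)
The plan is to apply Yuan--Zhang's equidistribution theorem in the reformulation of \cref{equidistributionthomas} twice, once for the forward Green measure $\mu^+_{f,\sigma}$ and once for $\mu^-_{f,\sigma}$. Since $\Lambda$ is a curve ($k=1$), the boundedness condition is automatic, so it will suffice to construct, for each sign $\pm$, a $v$-adic equidistribution model $\mathrm{EQ}^\pm_v(\Lambda)$ whose limit probability measure is proportional to $\mu^\pm_{f,\sigma}$, and to verify that $(t_m)$ is generic and $\mathrm{EQ}^\pm_v(\Lambda)$-small.

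To build the models, I would fix a smooth projective compactification $\B\supset\Lambda$ and, for each $n\geq 1$, let $\Psi_n^\pm\colon V_n^\pm\to\B$ be a projective birational modification supported over $\B\setminus\Lambda$ on which the rational map $f^{\pm n}\circ\sigma\colon\B\dashrightarrow\P^2$ becomes regular. I then set $\overline{L}_n^\pm\coloneqq \tfrac{1}{d^n}(f^{\pm n}\circ\sigma)^*\O_{\P^2}(1)$, viewed as an adelic $\Q$-line bundle by pulling back the standard adelic metric of \cref{standradmetric}; as the pullback of a semipositive adelic line bundle by a regular morphism, $\overline{L}_n^\pm$ is itself semipositive. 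At the archimedean place $v$, its curvature form is $\sigma^*\bigl(\tfrac{1}{d^n}(f_t^{\pm n})^*\omega_{\P^2}\bigr)$, whose local potential $\tfrac{1}{d^n}\log^+\!\lVert f_t^{\pm n}(\sigma(t))\rVert$ converges uniformly on compact subsets of $\Lambda(\C)$ to $G^\pm_{f_t}(\sigma(t))$ by \eqref{Greenfunction+-}. Hence $c_1(\overline{L}_n^\pm)$ tends weakly to $\mu^\pm_{f,\sigma}=\sigma^*\ddc G^\pm_f$ on $\Lambda(\C)$. Non-isotriviality and non-periodicity of $(f_t,\sigma)$ force $\mu^\pm_{f,\sigma}$ to be non-zero, via a Montel-type argument in the spirit of \cite[Lemma 2.3]{DujardinFavre08}; in particular $\lim_n\deg(L_n^\pm)=\int_\Lambda\mu^\pm_{f,\sigma}>0$, and after normalization the limit probability measure $\mu_v^\pm$ is proportional to $\mu^\pm_{f,\sigma}$, yielding non-degeneracy.

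For the smallness of $(t_m)$, I would use the identity $h_{\overline{L}_n^\pm}(t)=\tfrac{1}{d^n}h\bigl(f_t^{\pm n}(\sigma(t))\bigr)$ for all $t\in\Lambda(\Qbar)$. Iterating the Call--Silverman-type inequality of \cref{heightinequalitylemma1} shows that this quantity converges to $\hat h^\pm_{f_t}(\sigma(t))$ with error $O\bigl(d^{-n}(h_\Mscr(t)+1)\bigr)$. Since $\hat h_{f_{t_m}}(\sigma(t_m))\to 0$ and $\hat h_{f_t}=\hat h^+_{f_t}+\hat h^-_{f_t}$ is a sum of two non-negative terms, both $\hat h^\pm_{f_{t_m}}(\sigma(t_m))\to 0$. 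Coupled with the vanishing $h_{\overline{L}_n^\pm}(V_n^\pm)\to 0$ (the height of the whole base with respect to the limit canonical adelic line bundle must be zero), this yields $\mathrm{EQ}^\pm_v(\Lambda)$-smallness. Genericity of $(t_m)$ is then immediate after extracting a subsequence of pairwise distinct values, since proper Zariski closed subsets of the curve $\Lambda$ are finite.

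The main technical obstacle will be checking that the $\overline{L}_n^\pm$ are genuine semipositive adelic $\Q$-line bundles at \emph{every} place, together with the uniform estimate $h_{\overline{L}_n^\pm}(V_n^\pm)=O(d^{-n})$. The first point is handled by realizing the $V_n^\pm$ as iterated blow-ups of a fixed arithmetic model of $\B$ at points over $\B\setminus\Lambda$, so that the induced metrics are model metrics at all but finitely many non-archimedean places. The second reduces to controlling the defect between $\tfrac{1}{d^n}\log^+\!\lVert f_t^{\pm n}(\sigma(t))\rVert$ and $G^\pm_{f_t}(\sigma(t))$ near the cusps of $\Lambda$, which is precisely the content of \cite[Proposition 3.2]{irokawa2023hybrid} already invoked in the proof of \cref{hieghtIntersection}. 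Once these are settled, \cref{equidistributionthomas} applied to each sign delivers the two claimed weak limits, with proportionality constants given by the total masses of $\mu^\pm_{f,\sigma}$.
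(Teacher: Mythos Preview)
Your overall strategy coincides with the paper's: build the equidistribution model from the pulled-back standard adelic line bundles $\overline{L}_n^\pm=\tfrac{1}{d^n}(f^{\pm n}\circ\sigma)^*\overline{\O_{\P^2}(1)}$ and apply \cref{equidistributionthomas}. However, there is a genuine gap in the smallness argument. Your estimate $|h_{\overline{L}_n^\pm}(t)-\hat h^\pm_{f_t}(\sigma(t))|=O\bigl(d^{-n}(h_\Mscr(t)+1)\bigr)$ only yields
\[
\limsup_{m\to\infty} h_{\overline{L}_n^\pm}(t_m)\;\leq\; C\,d^{-n}\Bigl(\limsup_{m\to\infty} h_\Mscr(t_m)+1\Bigr),
\]
and you never show that $h_\Mscr(t_m)$ is bounded. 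Without this, the right-hand side may be infinite for every fixed $n$, and smallness fails outright. The paper closes this gap by a bootstrap: since $\deg(L_n^\pm)$ is bounded below by the non-degeneracy you already established, there is a single integer $N$ with $NL_n^\pm-\Mscr$ ample for all $n$, hence $h_\Mscr(t_m)/N+c_n\leq h_{\overline{L}_n^\pm}(t_m)$. Combining this lower bound with the upper bound coming from \cref{heightinequalitylemma1} and dividing by $h_\Mscr(t_m)$ along a hypothetical unbounded subsequence gives $1/N\leq C_1/d^n$ for all $n$, a contradiction. Once $h_\Mscr(t_m)$ is bounded, the paper simply uses $h_{\overline{L}_n^\pm}(V_n^\pm)\geq 0$ from Zhang's inequality (\cref{zhanginequality}), which is cheaper than your proposed route via \cite{irokawa2023hybrid}: that estimate controls only the archimedean potential, so by itself it does not give the adelic height of the base.

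A second, smaller issue concerns your justification of $\mu^\pm_{f,\sigma}\neq 0$. The Montel-type argument of \cite[Lemma~2.3]{DujardinFavre08} is for rational maps on $\P^1$ and does not transfer to plane polynomial automorphisms; the relevant non-vanishing for non-isotrivial, non-periodic pairs is due to Gauthier--Vigny \cite{GVhenon}, which is what the paper invokes.
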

\begin{proof}
    Let $\Lbar$ be the pull back of the canonical line bundle $\overline{\O_{\P^2}(1)}$ with the standard adelic metrization (see Example \ref{standradmetric}) by the projection $\pi_{\P^2}\colon \B\times \P^2 \to \P^2$, and $\omega_\L$ its curvature form.
    For any positive integer $n\geq 1$, define an adelic metrization by $\Dbar^+_n \coloneqq 1/d^n {(f^n\circ \sigma)}^{*} \Lbar$ and denote by $\omega^+_n$ its curvature form. Our equidistribution model will be 
    \begin{align*}
        \mathrm{EQ}(\Lambda)\coloneqq\mathrm{EQ}_v(\Lambda,\B,\mathrm{id}_\B,\Dbar_n).
    \end{align*}

    Let us first show that $\mathrm{EQ}(\Lambda)$ is non-degenerate. If $f_t$ is non-isotrivial, then since $\sigma$ is non-periodic, $\lim_{n\to +\infty}\vol(\D_n^+)\neq 0$ by \cite{GVhenon}. By the construction of the Green function $G_f^+$, we have $\lim_{n \to +\infty}\omega^+_n = \mu^+_{f,\sigma}$, and again this is non-zero by \cite{GVhenon}. If $f_t$ is isotrivial, we can assume $f_t=g$ for some fixed $g$ for all $t\in \Lambda(\Qbar)$. Since ($f_t,\sigma$) is not-isotrivial, we have $\lim_{n\to +\infty} \vol(\D_n^+)=\deg(\sigma)\neq 0$. Let $\omega_{\mathrm{FS}}$ be the curvature form of $\overline{\O_{\P^2}(1)},$ then \[\lim_{n\to +\infty}\omega^+_n =\sigma^* \pi_{\P^2}^* \left(\lim_{n\to +\infty}{g^n}^* \omega_{\mathrm{FS}}/d^n\right) = \mu^+_{f,\sigma}\neq 0.\]

    To apply Theroem~\ref{equidistributionthomas}, it remains   to show that the sequence $t_m$ is $\mathrm{EQ}(\Lambda)$-small. Since $\Dbar_n$ is nef, Zhang's inequality (Theorem \ref{zhanginequality}) implies that $h_{\Dbar^+_m}(\B) \geq 0$. Hence, by Theorem~\ref{boundedheight} and inequality~\eqref{UpperBound}, there exists a positive constant $c'>0$ such that
\begin{align*}
    \limsup_{m \to +\infty}h_{\Dbar^+_n}(t_m) - h_{\Dbar^+_n}(\B) \leq \frac{c'}{d^n},
\end{align*}
which proves the smallness of $t_m$ and, by Theorem~\ref{equidistributionthomas}, the convergence of the measure $\mu_{t_m}$ to some positive multiple of $\mu_{f,\sigma}^+$.

We can now repeat the argument above for $\Dbar^-_m\coloneqq 1/d^m {(f^{-m}\circ \sigma)}^{*} \Lbar$, obtaining the convergence of $\mu_{t_m}$ to a positive multiple of $\mu_{f,\sigma}^-$.
\end{proof}

\subsection{Equidistribution for non-degenerate families of curves}
Recall the good open subset $\Cscr^0$ of $\Cscr$ given by Theorem~\ref{height inequality curve}. Recall also the notations in subsection~\ref{notations}
\begin{thm}\label{equidistribution curves}
    Let $\Cscr\subset \Lambda \times \A^2$ be a non-degenerate family of curves defined over a number field $\K.$ Fix an archimedean place $v\in M_\K$, so that we have an inclusion of $\K\hookrightarrow\C.$ Suppose we have a generic sequence $x_m=(t_m,z_m)\in\Cscr^{0,[2]}(\Qbar)\subset \Lambda\times\A^4(\Qbar)$ such that $\lim_{m\to +\infty}\hat{h}^{[2]}_{f_{t_m}}(x_m)=0.$
    
    Then the sequence of probability measures $\mu_{x_m}$, which are uniformly supported on the Galois orbit $\gal(\overline{\Q}/\K)x_m$ of $x_m$, converges weakly on $\Cscr^{0,[2]}(\C)$ to some positive multiple of
    \begin{align*}
        \mu_{f^{[2]},\Cscr^{0,[2]}}\coloneqq\left( \ddc G_{f}^{[2]} \right)^3 \wedge [\Cscr^{0,[2]}]
    \end{align*}
\end{thm}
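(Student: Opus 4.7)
The plan is to deduce the statement from the Yuan--Zhang equidistribution Theorem~\ref{equidistributionthomas}, applied to the smooth quasi-projective variety $V:=\Cscr^{0,[2]}$ of dimension $3$, following the template of the proof of Theorem~\ref{equidistributionpoints}. I will first construct an equidistribution model whose limiting measure is a positive multiple of $\mu_{f^{[2]},\Cscr^{0,[2]}}$, then verify its non-degeneracy and boundedness, and finally check that the given sequence $(x_m)$ is $\mathrm{EQ}(V)$-small.

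\textbf{The model.} Fix a projective closure $\overline{\Cscr}\subset \B\times \P^2$ of $\Cscr$ and let $V_0$ be a desingularization of $\overline{\Cscr}\times_{\B}\overline{\Cscr}$, with projections $p,q\colon V_0\to\overline{\Cscr}$. For each $n\geq 1$, let $V_n\to V_0$ be a resolution of the indeterminacies of both rational maps $\Fscr_n\circ p,\,\Fscr_n\circ q\colon V_0\dashrightarrow \B\times \P^4$; this yields a birational morphism $\Psi_n\colon V_n\to V_0$, an isomorphism over $V$, together with two regular morphisms $\Fscr_{n,p},\Fscr_{n,q}\colon V_n\to \B\times \P^4$. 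Taking $\overline{\L'}$ to be the standardly metrized $\O_{\P^4}(1)$ from Example~\ref{standradmetric}, I set
\[
\overline{L}_n := \tfrac{1}{d^n}\Fscr_{n,p}^{*}\overline{\L'}\;+\;\tfrac{1}{d^n}\Fscr_{n,q}^{*}\overline{\L'},
\]
a sum of semipositive nef adelic line bundles. Replacing $\Cscr$ by the iterate $f^{N}(\Cscr)$ furnished by Proposition~\ref{positivityI_f} (the hypothesis $\hat h_{f_{t_m}}^{[2]}(x_m)\to 0$ is preserved, since $\hat h_{f_t}^{\pm}$ scale multiplicatively under iteration, and genericity is preserved because $f^N$ is an automorphism of each fiber), I may assume $I_{f,\Cscr}>0$, whence $\int(\ddc G_f^{[2]})^3\wedge[\Cscr^{[2]}]>0$.

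\textbf{Non-degeneracy, boundedness, smallness.} Mimicking the proof of Proposition~\ref{hieghtIntersection} and invoking the degeneration estimate~\eqref{degeneration estimate} on each factor yields
\[
\lim_{n\to\infty}\vol(L_n)\;=\;\lim_{n\to\infty}L_n^{3}\;=\;\int_{V}(\ddc G_f^{[2]})^3\wedge[\Cscr^{[2]}]\;>\;0,
\]
so $L_n$ is big and nef for $n$ sufficiently large, and the rescaled currents $\vol(L_n)^{-1}(\Psi_n)_{*}c_1(\overline{L}_n)^{3}$ converge weakly on $V(\C)$ to the probability measure $\mu_v$ obtained by normalizing $\mu_{f^{[2]},\Cscr^{0,[2]}}$. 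Boundedness of the mixed intersections $(\Psi_n^{*}\overline{E})^{j}\cdot\overline{L}_n^{4-j}$ for $j=2,3$ and any nef $\overline{E}$ on $V_0$ follows from the same estimate. For smallness, write $x_m=(t_m,z_m^{(1)},z_m^{(2)})$ with $(t_m,z_m^{(i)})\in\Cscr^{0}(\Qbar)$. Applying inequality~\eqref{heightinequality1} of Lemma~\ref{heightinequalitylemma1} to each of the four points $f_{t_m}^{\pm n}(z_m^{(i)})$ and dividing by $d^n$ gives
\[
h_{\overline{L}_n}(\Psi_n^{-1}(x_m))\;\leq\;\bigl(1+\tfrac{1}{d^{2n}}\bigr)\hat h_{f_{t_m}}^{[2]}(x_m)\;+\;\tfrac{C}{d^{n}}\bigl(h_{\M}(t_m)+1\bigr)
\]
for a constant $C>0$ independent of $n,m$. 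By Lemma~\ref{height inequality curve} applied to each factor, $h_{\M}(t_m)\leq C_3\hat h_{f_{t_m}}^{[2]}(x_m)+C_4$, so $h_{\M}(t_m)$ stays bounded as $\hat h_{f_{t_m}}^{[2]}(x_m)\to 0$. Combined with the lower bound $h_{\overline{L}_n}(V_n)\geq 0$ from Zhang's inequality (Theorem~\ref{zhanginequality}), this gives
\[
\limsup_{m\to\infty} h_{\overline{L}_n}(\Psi_n^{-1}(x_m))-h_{\overline{L}_n}(V_n)\;=\;O(d^{-n})\;\xrightarrow{n\to\infty}\;0,
\]
which is exactly $\mathrm{EQ}(V)$-smallness.

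\textbf{Conclusion and main obstacle.} Theorem~\ref{equidistributionthomas} then yields weak convergence of the Galois-orbit measures $\mu_{x_m}$ to $\mu_v$, a positive multiple of $\mu_{f^{[2]},\Cscr^{0,[2]}}$, as claimed. The main technical obstacle I anticipate is the non-degeneracy step, namely ensuring $\lim_n L_n^{3}>0$: Proposition~\ref{positivityI_f} (and the reduction to an iterate of $\Cscr$ that it entails) is indispensable, since otherwise the top self-intersection could degenerate to $0$ and the Yuan--Zhang framework would apply only trivially.
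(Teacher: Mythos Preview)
Your overall approach matches the paper's: build an equidistribution model on $\Cscr^{0,[2]}$ with adelic line bundles $\overline{L}_n\approx \tfrac{1}{d^n}\Fscr_n^{[2]*}\overline{\L'}^{[2]}$ and verify the hypotheses of Theorem~\ref{equidistributionthomas}. The smallness argument via \eqref{heightinequality1} and Lemma~\ref{height inequality curve} is essentially the paper's. There is, however, one genuine gap and one self-inflicted complication.

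\textbf{Boundedness.} You assert that the bounds $(\Psi_n^*\overline{E})^j\cdot\overline{L}_n^{4-j}\leq C$ for $j=2,3$ ``follow from the same estimate'', meaning the degeneration estimate~\eqref{degeneration estimate}. That estimate is purely archimedean, whereas these are \emph{arithmetic} intersection numbers with contributions from every place; it does not control them. The paper's argument is different and substantive: from the height inequality~\eqref{inequality2} one obtains $e_{\mathrm{ess}}\big(C_M\tilde\pi_n^*\overline{\M}'+2\tilde\Psi_n^*\overline{\L}^{[2]}-\overline{\L}'_n\big)\geq 0$, hence this adelic line bundle is pseudo-effective by Theorem~\ref{essentialmin}, and then Moriwaki's Proposition~\ref{pesudoeffective} yields $(\tilde\Psi_n^*\overline{\E})^2\cdot\overline{\L}'_n\leq(\tilde\Psi_n^*\overline{\E})^2\cdot(C_M\tilde\pi_n^*\overline{\M}'+2\tilde\Psi_n^*\overline{\L}^{[2]})$, a bound independent of $n$. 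Without this step (or an equivalent adelic comparison), boundedness is unproved.

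\textbf{The replacement by $f^N(\Cscr)$.} You invoke Proposition~\ref{positivityI_f} and pass to $f^N(\Cscr)$ to force $I_{f,\Cscr}>0$. This is unnecessary: the paper observes directly that on the $3$-fold $\Cscr^{[2]}$ one has
\[
(\ddc G_f^{[2]})^3\wedge[\Cscr^{[2]}]\;=\;3\,p^*\mu_{f,\Cscr}\wedge q^*(\ddc G_f\wedge[\Cscr])\;+\;3\,q^*\mu_{f,\Cscr}\wedge p^*(\ddc G_f\wedge[\Cscr]),
\]
which is already nonzero from the standing hypothesis $\mu_{f,\Cscr}\neq 0$. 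Worse, the replacement is not innocuous for the \emph{conclusion}: after running the proof on $f^N(\Cscr)$ and pulling back by the biregular map $(f^N)^{[2]}$, the limiting measure becomes $(\ddc(G_f\circ f^N)^{[2]})^3\wedge[\Cscr^{[2]}]$. Since $G_f\circ f^N=\max(d^NG_f^+,\,d^{-N}G_f^-)$ is not a scalar multiple of $G_f$, the slice $\ddc(G_f\circ f^N)\wedge[\Cscr]$ appearing in the expansion above is genuinely different from $\ddc G_f\wedge[\Cscr]$, and you have not argued that the resulting measure is proportional to $\mu_{f^{[2]},\Cscr^{0,[2]}}$. Dropping the replacement and following the paper's direct non-degeneracy argument avoids this entirely.
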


\begin{proof}
    Let us first construct an equidistribution model of the smooth irreducible variety $\Cscr^{0,[2]}$. Let $\Tilde{\Cscr}_0$ be the Zariski closure of $\Cscr^{0,[2]}$ in $\B\times\P^4$. Let $\Tilde{\Cscr}_n$ be the normalization of the graph of the birational map $\Fscr_n^{[2]}\colon \Tilde{\Cscr}_0 \dashrightarrow \Cscr_n^{[2]}$. Let $\Tilde{\Psi}_n\colon\Tilde{\Cscr}_n \to \Tilde{\Cscr}_0$ and $\Tilde{\Gscr}_n\colon\Tilde{\Cscr}_n \to \Cscr_n^{[2]}$ be the two projections and $\Tilde{\pi}_n\coloneqq{\pi'} \circ \Gscr_n \colon \Tilde{\Psi}_n \to \B$ the projection to the base $\B.$ 
    \[
    \begin{tikzcd}
& \Tilde{\Cscr}_n \arrow[ld, "\Tilde{\Psi}_n"'] \arrow[rd, "\Tilde{\mathscr{G}}_n"] \arrow[dd, "\Tilde{\pi}_n" description, bend right] &    \\
\Tilde{\mathscr{C}}_0\subset\mathfrak{B}\times\mathbb{P}^4 \arrow[rr, "{\quad\quad\ \ \ \mathscr{F}_n^{[2]}}", dashed] \arrow[rd, "\pi'"'] &                                                                                                                                   & {\mathscr{C}^{[2]}_n\subset\mathfrak{B}\times\mathbb{P}^4}\times\mathbb{P}^4 \arrow[ld, "{\pi'}^{[2]}"] \\
                        &\mathfrak{B}      &            
\end{tikzcd}
    \]
    Let $\Mbar$ be a semipositive adelic metric on an ample divisor $\M$ such that $h_{\Mbar}(t)\geq 0$ for all $t\in \B(\Qbar)$.
    Let $\Lbar'$ be the pull back of the canonical line bundle $\overline{\O_{\P^4}(1)}$ with the standard adelic metrization (Example \ref{standradmetric}) by the projection $\B\times \P^4 \to \P^4$.
    Let $\Lbar'_n\coloneqq \frac{1}{d^n}\left(\Tilde{\Gscr}_n^{*}\Lbar'^{[2]}+\Tilde{\pi}_n^{*}\Mbar\right)$. We thus have a model
    $$\mathrm{EQ}(\Cscr^{0,[2]})=\mathrm{EQ}_v(\Cscr^{0,[2]}, \Tilde{\Cscr}_n, \Tilde{\Psi}_n, \Lbar'_n).$$

    We first show that the model $\mathrm{EQ}(\Cscr^{0,[2]})$ is non-degenerate. The volume of $\L'_n$ is 
    $$
    \vol(\L'_n) = \frac{1}{d^{3n}}\left(\Tilde{\Gscr}_n^{*}\Lbar'^{[2]}+\Tilde{\pi}_n^{*}\Mbar\right)^3\cdot \Cscr_n^{[2]} = \frac{1}{d^{3n}}\left({\L'}^{[2]}\right)^3\cdot \Cscr_n^{[2]} +\frac{1}{d^{3n}}O(1),
    $$
    where $O(1)$ is independent of $n.$ Since 
    \begin{align*}
        \left( \ddc G_{f^{[2]}} \right)^3 \wedge [\Cscr^{0,[2]}] = p^*\mu_{f,\Cscr^0}\wedge q^*\left(\ddc G \wedge [\Cscr^0]\right)
    \end{align*}
    and by assumption $\mu_{f,\Cscr}\neq 0,$ we infer that the measure $ \mu_{f^{[2]},\Cscr^{0,[2]}}$ is non zero either.Therefore the limit of volumes $\vol(\L'_n)$ is non zero. In fact, we have 
    \begin{align*}
        \lim_{n\to+\infty} \vol(\L'_n) =\lim_{n\to+\infty} \frac{1}{d^{3n}}\left({\L'}^{[2]}\right)^3\cdot \Cscr_n^{[2]} \geq \int_{\Lambda\times\A^2\times\A^2(\C)}  \left( \ddc G_{f^{[2]}} \right)^3 \wedge [\Cscr^{0,[2]}] >0.
    \end{align*}
   
    Let $\omega$ be the curvature form $c_1(\Lbar')$ on $\Lambda\times \A^4(\C)$ and $\omega_b$ the curvature form $c_1(\Mbar)$ on $\Lambda(\C)$. By the construction of the Green function $G_f$, we have
    \begin{align*}
        \lim_{n\to\infty}\frac{1}{d^n}\bigg({\Fscr_n^{[2]}}^{*}(\omega^{[2]})+{\pi'}^{*}(\omega_b) \bigg) = dd^cG_f^{[2]}.
    \end{align*}
    Thus the measure $\mu_v$ of the equidistribution model is $\mu_{f^{[2]},\Cscr^{0,[2]}}$.

    We next show that the sequence $x_m$ is $\mathrm{EQ}(\Cscr^{0,[2]})$-small. 
    By the inequality~\eqref{heightinequality4}, for all positive integers $n\geq 0$ and for all $x=(t,z)\in \Cscr^{0,[2]}(\Qbar)$, we have
    \begin{align}\label{heightinequality5}
        \frac{1}{d^n}\bigg(h^{[2]}\big((f_t^{[2]})^{n}(z)\big) + h^{[2]}\big((f_t^{[2]})^{-n}(z)\big)\bigg) \leq \left(1+\frac{1}{d^{2n}}\right)\hat{h}_{f_t^{[2]}}(z) + \frac{2C_1}{d^n}(h_{\Mbar}(t)+1).
    \end{align}
    Since by construction $h_{\Tilde{\Cscr}_n,\Lbar'_n}(x)\geq 0$ for any $x\in \Tilde{\Cscr}_n(\Qbar)$, Theorem~\ref{zhanginequality} implies $h_{\Tilde{\Cscr}_n,\Lbar'_n}(\Tilde{\Cscr}_n)\geq 0.$ 
    Thus for all $x=(t,z)\in \Cscr^{0,[2]}(\Qbar)$, we have
    \begin{align*}
        \varepsilon_n(x)&\coloneqq h_{\Tilde{\Cscr}_n, \Lbar'_n}(\Tilde{\Psi}_n^{-1}(x)) -h_{\Tilde{\Cscr}_n, \Lbar'_n}(\Cscr_0) \leq  h_{\Tilde{\Cscr}_n, \Lbar'_n}\big(\Tilde{\Psi}_n^{-1}(x)\big) = \frac{1}{d^n}\Big((h')^{[2]}\big(\Fscr^{[2]}_{n,t}(z)\big)+h_{\Mbar}(t)\Big)\\
        &\leq   \frac{1}{d^n}\Big(h^{[2]}\big((f_t^{[2]})^{n}(z)\big) + h^{[2]}\big((f_t^{[2]})^{-n}(z)\big)+h_{\Mbar}(t)\Big)
    \end{align*}
    By \eqref{heightinequality5} and Lemma \ref{height inequality curve}, there exist constants $c_1,c_2>0$ such that $\varepsilon_n(x)\leq c_1\hat{h}_{f_t^{[2]}}(x) +\frac{c_2}{d^n}.$
Since $\lim_{m\to \infty}\hat{h}^{[2]}_{f_{t_m}}(x_m) =0$ by our assumption, we have $\limsup_{m\to +\infty} \varepsilon_n(x_m) \leq \frac{c_2}{d^n}$, which tends to zero when $n\to +\infty.$

Finally we need to verify that $\mathrm{EQ}(\Cscr^{0,[2]})$ is bounded.
Let $\E$ be any ample line bundle on $\Tilde{\Cscr}_0$ with a nef adelic metrization $\overline{\E}$. By the height inequality~\eqref{inequality2}, there exists a nef adelic metrization $\Mbar'$ of $\M$ and a positive integer $C_M>0$ such that $e_{\mathrm{ess}}(C_M \Tilde{\pi}_n^*\Mbar' +2\Tilde{\Psi}_n^*\Lbar^{[2]}-\Lbar'_n)\geq 0$. By Theorem \ref{essentialmin}, this metrized line bundle is pseudo-effective, and by Proposition \ref{pesudoeffective},
\begin{align*}
    \left(\Tilde{\Psi}_n^*\overline{\E}\right)^2\cdot\Lbar'_n\leq \left(\Tilde{\Psi}_n^*\overline{\E}\right)^2 \cdot (C_M\Tilde{\pi}_n^*\Mbar' +2\Tilde{\Psi}_n^*\Lbar^{[2]}) =\overline{\E}^2 \cdot (C_M{\pi^{[2]}}^* \Mbar'+2\Lbar^{[2]}).
\end{align*}
All the conditions of Theorem \ref{equidistributionthomas} are satisfied, leading to equidistribution.
\end{proof}

\section{Proofs of main theorems}\label{sectionproof}
\subsection{Proof of Theorem \ref{MainMarkedPoint}}
We may assume that the pair $(f_t,\sigma)$ is non-isotrivial; otherwise we reduces to a single regular plane polynomial automorphism and a non-periodic point, implying the absence of any periodic parameters. Suppose there exist infinitely many periodic parameters $t$ such that $\sigma(t)$ is periodic for $f_t$. In particular, this means $\hat{h}_{f_t}(\sigma(t))=0$. Then, by the equidistribution theorem \ref{equidistribution curves}, the two non-vanishing Green measures $\sigma_{f,\sigma}^\pm$ are proportional. Therefore, we can apply the results of Sect.~\ref{sectionmarkedpoints}. The assumption that $\Jac(f_t)$ lies on the unit circle implies that $\sigma(t)$ can not be neutral periodic and moreover, can not be saddle either, as established by Proposition~\ref{saddle parameters}. Proposition~\ref{propsemi} further shows that $\sigma(t)$ is neither semi-repelling nor semi-attracting. Finally $\sigma(t)$ can not be repelling or attracting by Proposition~\ref{proprepelling}. Thus, we reach a contradiction, as none of the possible situations are viable.

\subsection{Proof of Theorem \ref{MainMarkedPoint2}}
We can assume the pair $(f_t,\sigma)$ is non-isotrivial. Suppose there exist infinitely many periodic parameters $t\in \Lambda$. By the proof of Theorem \ref{MainMarkedPoint}, no parameter can be (semi-)repelling or (semi-)attracting. Now, suppose there are infinitely many saddle parameters. Then by Proposition \ref{saddle parameters}, we know that the Jacobians $\Jac(f_t)$ must all be roots of unity. Denote by $C\subset \Lambda$ the real analytic curve formed by the parameters $t$ for which $f_t$ has Jacobian lying on the unit circle. Applying the equidistribution theorem \ref{equidistribution curves} to these parameters, we obtain $\supp(\mu_{f,\sigma}^+) \subset C$. Let $t_0$ be a parameter such that $\sigma(t_0)$ is a saddle fixed point and $C$ is smooth at $t_0.$ We now work in the local coordinate as described in the step 0 of the proof of Proposition \ref{saddle parameters}. Consider the support
\begin{align*}
    Z_n\coloneqq\{t\in \Dbb(\varepsilon) \ |\  \frac{t}{\lambda_u^n } \in \supp(\mu_{f,\sigma}^+)\}
\end{align*}
of $dd^c G^+_{r^u_n(t)}(\sigma(r^u_n(t)))$. Denote by $Z^+$ the support of $dd^c(G^+_{0}\circ \rho_0^u)$. Let $\phi_q\colon t\mapsto t^q$, then $\phi_q^{-1} Z^+$ is contained in the limit points of $Z_n$, which is the set of points $t\in \Dbb(\varepsilon)$ such that there exists a sequence $t_n$ of points in $Z_n$ coverging to $t$.
Write $\lambda_u=|\lambda_u|e^{i\pi\theta}$ and
let $\xi^{-1}$ be the limit of a subsequence $e^{i\pi\theta n_j}$ of $e^{i\pi\theta n}$. Let $L$ be the tangent line of $C$ at the origin (in $\Dbb(\varepsilon)$). The limit points of $Z_{n_j}$ is contained $\xi L$, hence $Z^+ \subset \phi_q(\xi L)$. This implies that $e^{i\pi\theta n}$ has only two limit points, $\pm 1$. Therefore, $e^{i\pi\theta}=\pm 1$, meaning $\lambda_u$ is real and $Z^+ \subset \phi_q(L)$. 
We will need the following lemma.
\begin{lem}[{\cite[Proposition 2.2]{CantatDujardin}}]\label{RigidityDujardinCantat}
    Let $f$ be a regular polynomial automorphism of complex affine plane. Denote by $G^+_f$ the forward Green function of $f.$ Let $p$ be a saddle periodic point. Let $W^u_{\mathrm{loc}}(p)$ be its unstable local manifold and $\rho_p\colon \Dbb \to W^u_{\mathrm{loc}}(p)$ a local parametrization (thus $\rho_p(0)=p$), where $\Dbb\subset \C$ is the unit disk.  If $\supp(dd^c G_f^+\circ \rho_p)$ is contained in a line passing though the origin, then for any other saddle periodic point $q$, $\supp(dd^c G_f^+\circ \rho_q)$ is also contained in a line passing through the origin.
\end{lem}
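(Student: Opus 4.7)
The approach is to transport the geometric structure of the support from $W^u(p)$ to $W^u(q)$ using the density of heteroclinic intersections, together with the inclination (lambda) lemma and the invariance of the forward Green function.

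First I would recall that the measure $\nu_p := \ddc(G_f^+\circ \rho_p)$ has support equal to $\rho_p^{-1}(J^+\cap W^u_{\mathrm{loc}}(p))$, where $J^+$ is the forward Julia set. The unstable manifold $W^u(p)$ of a saddle periodic point of a regular polynomial automorphism admits a global holomorphic parametrization $\C \to W^u(p)$ conjugating $f^{k_p}$ (with $k_p$ the period of $p$) to multiplication by the unstable multiplier $\lambda_{u,p}^{k_p}$. The functional equation $G_f^+\circ f = d\cdot G_f^+$ then shows that, in this global coordinate, $\lambda_{u,p}^{k_p}\cdot\supp\nu_p = \supp\nu_p$, and the hypothesis that the support lies in a real line $L$ through the origin is a strong rigidity condition forcing $\lambda_{u,p}^{k_p}L = L$.

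Next I would exploit the density of heteroclinic intersections. For a regular polynomial automorphism of H\'enon type, the unstable manifold $W^u(p)$ accumulates on $W^u(q)$ and there exist transverse points $z_0 \in W^u_{\mathrm{loc}}(p)\cap W^s_{\mathrm{loc}}(q)$, after possibly replacing the points by iterates (this follows from standard results of Bedford--Lyubich--Smillie on density of stable/unstable manifolds in $J^{\pm}$). The inclination lemma then implies that the iterates $f^{n}(D)$ of a small disk $D\subset W^u_{\mathrm{loc}}(p)$ around $z_0$ converge in the $C^1$ topology to $W^u_{\mathrm{loc}}(q)$ near $q$. Passing through the global linearizations at $p$ and at $q$ and rescaling by appropriate powers of $\lambda_{u,q}^{k_q}$ so that the disks keep a bounded size, I obtain a sequence of holomorphic maps $\psi_n \colon \Dbb \to W^u(q)$ converging to an affine parametrization of a neighborhood of $q$.

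Using the functional equation once more, the pullback measures $\psi_n^{*}(\ddc G_f^+)$ equal (up to a uniform scalar) the restrictions of $\nu_p$ to shrinking neighborhoods of $z_0$, transported through maps whose limit is affine and fixes the origin. Since affine maps send lines through the origin to lines through the origin, and $\supp\nu_p$ is contained in $L$, the limit support $\supp(\ddc G_f^+\circ \rho_q)$ on a small neighborhood of $0\in\Dbb$ is contained in a line through the origin. The scaling invariance $\lambda_{u,q}^{k_q}\cdot\supp\nu_q = \supp\nu_q$ then propagates this conclusion to all of $\Dbb$. The main obstacle will be the renormalization step: ensuring that the rescaled limit is genuinely affine rather than a more general holomorphic map, which requires matching the expansion rates through the global linearizations on both sides, in the same spirit as the renormalization lemmas of Section~\ref{sectionrenormalization}.
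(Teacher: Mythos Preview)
The paper does not prove this lemma: it is quoted verbatim from Cantat--Dujardin \cite[Proposition 2.2]{CantatDujardin} and used as a black box in the proof of Theorem~\ref{MainMarkedPoint2}. So there is no argument in the paper to compare your proposal against.

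That said, your outline is in the right spirit and is essentially how such rigidity statements are proved. A couple of comments. First, your observation that the heteroclinic point $z_0$ lies in $J^+$ (hence its linearizing coordinate $\xi_0$ already belongs to the line $L$) is exactly what makes the conclusion ``line through the origin'' survive the transfer: an affine map $\eta = a(\xi-\xi_0)$ sends $L$ to $aL$ precisely because $\xi_0\in L$. Second, the point you flag as the main obstacle is genuinely the delicate one. In general the inclination lemma only gives $C^1$ convergence of $f^n(D)$ to $W^u_{\mathrm{loc}}(q)$, and the renormalized transition maps between the two linearizing charts converge to a holomorphic, not a priori affine, map. What saves the argument is the structure of the current $T^+=\ddc G_f^+$: near a saddle, $T^+$ is uniformly laminar with holomorphic leaves (pieces of stable manifolds) and a transverse measure class, so the slice measures on two nearby holomorphic transversals are related by the stable holonomy, which is asymptotically conformal along the orbit $f^n(z_0)\to q$. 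This is how Cantat--Dujardin (building on the Bedford--Lyubich--Smillie theory) control the limit. Your sketch captures the mechanism correctly; filling in that last step requires invoking the laminar description of $T^+$ rather than a bare renormalization estimate.
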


In the local coordinate provided by a parametrization of a local unstable manifold, the action of $f$ on the Julia set corresponds to multiplication by the unstable multiplier. If the Julia set is contained in a line, then the unstable multiplier has to be real. Let $\sigma_0$ denote the local analytic continuation of the saddle  point $\sigma(t_0)$ over an analytic open subset $U\subset \Lambda$. Take any saddle parameter $t$ close to $t_0$ and repeat the above argument. This implies that the Julia set in the local unstable manifold of $\sigma(t)$ is contained in a line through the origin. By Lemma \ref{RigidityDujardinCantat}, the same property holds at the saddle fixed point $\sigma_0(t)$. By symmetry, the Jacobian of $f_t$ must therefore be real. Since $\Jac(f_t)$ is a root of unity, it must be $\pm 1.$ However, a holomorphic function of one variable has discrete preimages, leading to a contradiction.

\subsection{Proof of Theorem \ref{maincurve}}
Since $\Cscr$ is non-degenerate, Proposition \ref{positivityI_f} implies the following. Up to replacing $f$ by an iterate, there exists an analytic open subset $U\subset \Lambda\times \A^4(\C)$ such that, $G_{f^{[2]}}|_U >0$ and $\mu_{f,\Cscr^{0,[2]}}|_U>0.$ Let $\chi \colon U \to \R_+$ be a test function on $U$ such that $0\leq \chi \leq 1$. By construction of the Green functions, for any $(t,x,y)\in \Lambda\times(\A^2)^2(\Qbar)$, we have 
\begin{align}\label{inequality1.3.1}
\begin{split}
    \hat{h}_{f^{[2]}_t}(x,y)=\hat{h}_{f_t}(x)+\hat{h}_{f_t}(y) &\geq G^+_{f_t}(x)+G^-_{f_t}(x)+G^+_{f_t}(y)+G^-_{f_t}(y)\\
    &\geq \frac{1}{2}(G_{f_t}(x) +G_{f_t}(y)) =\frac{1}{2}G_{f^{[2]}_t}(x,y).
\end{split}
\end{align}
Suppose by contradiction that for every $n\in \Z_{\geq 1}$, there exists a parameter $\lambda_n\in \Lambda(\Qbar)$ such that the set $W_n \coloneqq \{(\lambda_n,w)\in \Cscr_{t_n}(\Qbar)\ |\ \hat{h}_{f_{t_n}}(w) \leq 1/m\}$ contains at least $n$ points. Order the set of points of the form $(\lambda_m,w_1,w_2)$ with $w_1,w_2\in W_m$ for some $m\geq 1$, and denote this sequence by $x_n=(t_n,z_n)\in \Lambda\times \A^4(\Qbar)$. We claim that $x_n$ is a generic sequence in $\Cscr^{[2]}$, and thus in $\Cscr^{0,[2]}$. In fact, otherwise the Zariski closure $\Zscr$ of $x_n$ in $\Cscr^{[2]}$ would have relative dimension 1 with respect to the base $\Lambda$. The projection $\Zscr \to \Cscr$ would then a finite morphism with degree $d_1$. Since $x_n$ is symmetric (i.e., if we write $x_n=(t_n,w_1,w_2)$, then there exists $n'$ such that $x_{n'}=(t_n,w_2,w_1)$), there are at most $d_1^2$ points in $W_n$ for all but finitely many $n$, which leads to a contradiction. 

Applying the equidistribution Theorem \ref{equidistribution curves}
to the test function $\chi G_{f^{[2]}},$ for all $n$ large enough, we have
\begin{align}\label{inequality1.3.2}
\frac{1}{\mathrm{Card}(\O(x_n))}\sum_{x'\in \O(x_n)}\chi(x')G_{f^{[2]}}(x') \geq \frac{1}{2}\int_{\Lambda\times(\A^2)^2(\C)} \chi G_f \,\mu_{f^{[2]},\Cscr^{0,[2]}}>0,
\end{align}
where $\O(x_n)$ is the Galois orbit $\gal(\Qbar/K) x_n$ of $x_n$. Since height functions are invariant by the action of the Galois group, combining \eqref{inequality1.3.1} and \eqref{inequality1.3.2}, we have
\begin{align*}
    \hat{h}_{f^{[2]}_t}(x_n) &\geq \sup_{x'\in \O(x_n)}\frac{1}{2}G_{f^{[2]}}(x') \geq \frac{1}{4}\int_{\Lambda\times(\A^2)^2(\C)} \chi G_f\, \mu_{f^{[2]},\Cscr^{0,[2]}} >0,
\end{align*}
which is a contradiction.

\subsection{Proof of Theorem~\ref{MainBoundedHeightSurface}}
The assertion that $\{t\in\Lambda(\Qbar) \mid \exists z\in \Cscr_t(\Qbar), z\ \text{is periodic for}\ f_t \}$ has bounded height follows directly from Lemma~\ref{height inequality curve}. It remains to show that there exists a positive constant $C>0$ such that, for any $f$-periodic point $z=(t,x)$, we have $h(x)\leq C$. By the height inequality~\eqref{heightinequality1} of Lemma~\ref{heightinequalitylemma1}, this is true on a Zariski open subset $\Cscr^0$ of $\Cscr$. The subset $Z\coloneqq \Cscr\setminus \Cscr^0$ is of dimension at most 1. Theorem~\ref{MainBoundedHeight}, combined with height inequality~\eqref{heightinequality1}, imply that the periodic points contained in $Z$ is of bounded height as well, completing the proof.

\subsection{Proof of Theorem \ref{maincurve2}}
By Proposition \ref{DegenerationFilledJulia}, there exists $t_0 \gg 1$ such that for all $|t|\geq t_0$, we have $r_t < r$ and $\Cscr_t \cap K_t = \emptyset.$ Thus, condition (1) of Proposition \ref{comparaisonPrinciple} is satisfied. Now, suppose for the sake of contradiction that for any $n\in \Z_{\geq 1}$, there exists $t_n\in \Lambda(\Qbar)$ such that the set $\{z\in \Cscr_{t_n}\ |\ \hat{h}_{f_{t_n}}(z) \leq 1/m\}$ contains at least $n$ points. this assumption, combined with the already verified condition (1), implies the condition (2) of Proposition \ref{comparaisonPrinciple}. Thus $\Cscr$ is non-degenerate and we can apply Theorem \ref{maincurve}, leading to a contradiction.

\bibliographystyle{plain}
\bibliography{bib}

\begin{thebibliography}{10}

\bibitem{ballay2021}
Fran\c{c}ois Balla\"{y}.
\newblock Successive minima and asymptotic slopes in {A}rakelov geometry.
\newblock {\em Compos. Math.}, 157(6):1302--1339, 2021.

\bibitem{BedfordComparisonPrincipal}
Eric Bedford.
\newblock The operator (ddc)n on complex spaces.
\newblock In Pierre Lelong and Henri Skoda, editors, {\em S{\'e}minaire Pierre Lelong-Henri Skoda (Analyse) Ann{\'e}es 1980/81}, pages 294--323, Berlin, Heidelberg, 1982. Springer Berlin Heidelberg.

\bibitem{BLS93}
Eric Bedford, Mikhail Lyubich, and John Smillie.
\newblock Polynomial diffeomorphisms of {${\bf C}^2$}. {IV}. {T}he measure of maximal entropy and laminar currents.
\newblock {\em Invent. Math.}, 112(1):77--125, 1993.

\bibitem{BS1}
Eric Bedford and John Smillie.
\newblock Polynomial diffeomorphisms of c2: currents, equilibrium measure and hyperbolicity.
\newblock {\em Inventiones mathematicae}, 103(1):69--100, 1991.

\bibitem{BS3}
Eric Bedford and John Smillie.
\newblock Polynomial diffeomorphisms of {$\bold C^2$}. {III}. {E}rgodicity, exponents and entropy of the equilibrium measure.
\newblock {\em Math. Ann.}, 294(3):395--420, 1992.

\bibitem{Benedetto2007}
Robert~L. Benedetto.
\newblock Preperiodic points of polynomials over global fields.
\newblock {\em J. Reine Angew. Math.}, 608:123--153, 2007.

\bibitem{BuffEpstein}
Xavier Buff and Adam Epstein.
\newblock Bifurcation measure and postcritically finite rational maps.
\newblock In {\em Complex dynamics}, pages 491--512. A K Peters, Wellesley, MA, 2009.

\bibitem{CallSilverman}
Gregory~S. Call and Joseph~H. Silverman.
\newblock Canonical heights on varieties with morphisms.
\newblock {\em Compositio Math.}, 89(2):163--205, 1993.

\bibitem{CantatDujardin}
Serge Cantat and Romain Dujardin.
\newblock Some rigidity results for polynomial automorphisms of {${\bf C}^2$}, 2024.
\newblock Forthcoming.

\bibitem{chambertloir2010heights}
Antoine Chambert-Loir.
\newblock Heights and measures on analytic spaces. a survey of recent results, and some remarks, 2010.
\newblock arXiv:1001.2517.

\bibitem{CTZ23}
Pietro Corvaja, Jacob Tsimerman, and Umberto Zannier.
\newblock Finite orbits in surfaces with a double elliptic fibration and torsion values of sections, 2023.
\newblock arXiv:2302.00859.

\bibitem{DemarcoANT}
Laura DeMarco.
\newblock Bifurcations, intersections, and heights.
\newblock {\em Algebra Number Theory}, 10(5):1031--1056, 2016.

\bibitem{DKY1}
Laura DeMarco, Holly Krieger, and Hexi Ye.
\newblock Uniform {M}anin-{M}umford for a family of genus 2 curves.
\newblock {\em Ann. of Math. (2)}, 191(3):949--1001, 2020.

\bibitem{DKY2}
Laura DeMarco, Holly Krieger, and Hexi Ye.
\newblock Common preperiodic points for quadratic polynomials.
\newblock {\em J. Mod. Dyn.}, 18:363--413, 2022.

\bibitem{DemarcoMavraki20}
Laura DeMarco and Niki~Myrto Mavraki.
\newblock Variation of canonical height and equidistribution.
\newblock {\em Amer. J. Math.}, 142(2):443--473, 2020.

\bibitem{DemarcoMavraki24}
Laura DeMarco and Niki~Myrto Mavraki.
\newblock Dynamics on {$\Bbb{P}^1$}: preperiodic points and pairwise stability.
\newblock {\em Compos. Math.}, 160(2):356--387, 2024.

\bibitem{DGH21}
Vesselin Dimitrov, Ziyang Gao, and Philipp Habegger.
\newblock Uniformity in {M}ordell-{L}ang for curves.
\newblock {\em Ann. of Math. (2)}, 194(1):237--298, 2021.

\bibitem{DSLecture}
Tien-Cuong Dinh and Nessim Sibony.
\newblock Dynamics in several complex variables: endomorphisms of projective spaces and polynomial-like mappings.
\newblock In {\em Holomorphic dynamical systems}, volume 1998 of {\em Lecture Notes in Math.}, pages 165--294. Springer, Berlin, 2010.

\bibitem{DujardinFavre08}
Romain Dujardin and Charles Favre.
\newblock Distribution of rational maps with a preperiodic critical point.
\newblock {\em Amer. J. Math.}, 130(4):979--1032, 2008.

\bibitem{DFManinMumford2017}
Romain Dujardin and Charles Favre.
\newblock The dynamical manin--mumford problem for plane polynomial automorphisms.
\newblock {\em Journal of the European Mathematical Society}, 19(11):3421--3465, 2017.

\bibitem{FavreGauthier2022}
Charles Favre and Thomas Gauthier.
\newblock {\em The arithmetic of polynomial dynamical pairs}, volume 214 of {\em Annals of Mathematics Studies}.
\newblock Princeton University Press, Princeton, NJ, [2022] \copyright 2022.

\bibitem{Okainequality}
John~Erik Forn\ae~ss and Nessim Sibony.
\newblock Oka's inequality for currents and applications.
\newblock {\em Math. Ann.}, 301(3):399--419, 1995.

\bibitem{FriedlandMilnor}
Shmuel Friedland and John Milnor.
\newblock Dynamical properties of plane polynomial automorphisms.
\newblock {\em Ergodic Theory Dynam. Systems}, 9(1):67--99, 1989.

\bibitem{GGK21}
Ziyang Gao, Tangli Ge, and Lars Kühne.
\newblock The uniform {M}ordell-{L}ang conjecture, 2021.
\newblock arXiv:2105.15085.

\bibitem{GaoHabeggerManinMumford}
Ziyang Gao and Philipp Habegger.
\newblock The relative manin-mumford conjecture, 2023.
\newblock arXiv:2303.05045.

\bibitem{gauthier2023good}
Thomas Gauthier.
\newblock Good height functions on quasi-projective varieties: equidistribution and applications in dynamics, 2023.
\newblock arXiv:2105.02479.

\bibitem{gauthier2023sparsity}
Thomas Gauthier, Johan Taflin, and Gabriel Vigny.
\newblock Sparsity of postcritically finite maps of $\mathbb{P}^k$ and beyond: A complex analytic approach, 2023.
\newblock arXiv:2305.02246.

\bibitem{gauthier2020geometric}
Thomas Gauthier and Gabriel Vigny.
\newblock The geometric dynamical {N}orthcott and {B}ogomolov properties, 2020.
\newblock arXiv:1912.07907.

\bibitem{GVhenon}
Thomas Gauthier and Gabriel Vigny.
\newblock The geometric dynamical {N}orthcott property for regular polynomial automorphisms of the affine plane.
\newblock {\em Bull. Soc. Math. France}, 150(4):677--698, 2022.

\bibitem{Habegger13}
Philipp Habegger.
\newblock Torsion points on elliptic curves in {W}eierstrass form.
\newblock {\em Ann. Sc. Norm. Super. Pisa Cl. Sci. (5)}, 12(3):687--715, 2013.

\bibitem{hsia2018heights}
Liang-Chung Hsia and Shu Kawaguchi.
\newblock Heights and periodic points for one-parameter families of {H}\'enon maps, 2018.
\newblock arXiv:1810.03841.

\bibitem{Hubbard86}
John~H. Hubbard.
\newblock The {H}\'{e}non mapping in the complex domain.
\newblock In {\em Chaotic dynamics and fractals ({A}tlanta, {G}a., 1985)}, volume~2 of {\em Notes Rep. Math. Sci. Engrg.}, pages 101--111. Academic Press, Orlando, FL, 1986.

\bibitem{Ingram2011CanonicalHF}
Patrick Ingram.
\newblock Canonical heights for {H}{\'e}non maps.
\newblock {\em Proceedings of the London Mathematical Society}, 108, 2011.

\bibitem{irokawa2023hybrid}
Reimi Irokawa.
\newblock Hybrid dynamics of {H}\'enon mappings, 2023.
\newblock arXiv:2212.10851.

\bibitem{daoforcurves}
Zhuchao Ji and Junyi Xie.
\newblock Dao for curves, 2023.
\newblock arXiv:2302.02583.

\bibitem{KawaguchiAffine}
Shu Kawaguchi.
\newblock Canonical height functions for affine plane automorphisms.
\newblock {\em Mathematische Annalen}, 335:285--310, 06 2006.

\bibitem{Kawaguchi13}
Shu Kawaguchi.
\newblock Local and global canonical height functions for affine space regular automorphisms.
\newblock {\em Algebra Number Theory}, 7(5):1225--1252, 2013.

\bibitem{KuhneRelativeBogomolov}
Lars K\"{u}hne.
\newblock The relative {B}ogomolov conjecture for fibered products of elliptic curves.
\newblock {\em J. Reine Angew. Math.}, 795:243--270, 2023.

\bibitem{kuhne2021equidistribution}
Lars Kühne.
\newblock Equidistribution in families of abelian varieties and uniformity, 2021.
\newblock arXiv:2101.10272.

\bibitem{Lazarsfeld}
Robert Lazarsfeld.
\newblock {\em Positivity in algebraic geometry. {I}}, volume~48 of {\em Ergebnisse der Mathematik und ihrer Grenzgebiete. 3. Folge. A Series of Modern Surveys in Mathematics [Results in Mathematics and Related Areas. 3rd Series. A Series of Modern Surveys in Mathematics]}.
\newblock Springer-Verlag, Berlin, 2004.
\newblock Classical setting: line bundles and linear series.

\bibitem{Lee2013}
Chong~Gyu Lee.
\newblock The equidistribution of small points for strongly regular pairs of polynomial maps.
\newblock {\em Math. Z.}, 275(3-4):1047--1072, 2013.

\bibitem{MZ10}
D.~Masser and U.~Zannier.
\newblock Torsion anomalous points and families of elliptic curves.
\newblock {\em Amer. J. Math.}, 132(6):1677--1691, 2010.

\bibitem{MZ15}
David Masser and Umberto Zannier.
\newblock Torsion points on families of simple abelian surfaces and {P}ell's equation over polynomial rings.
\newblock {\em J. Eur. Math. Soc. (JEMS)}, 17(9):2379--2416, 2015.
\newblock With an appendix by E. V. Flynn.

\bibitem{MavrakiSchmidt}
Niki~Myrto Mavraki and Harry Schmidt.
\newblock On the dynamical bogomolov conjecture for families of split rational maps, 2022.
\newblock arXiv:2201.10455.

\bibitem{MoriwakiRdivisor}
Atsushi Moriwaki.
\newblock Adelic divisors on arithmetic varieties.
\newblock {\em Mem. Amer. Math. Soc.}, 242(1144):v+122, 2016.

\bibitem{Pink05}
Richard Pink.
\newblock A common generalization of the conjectures of {A}ndr\'e-{O}ort, {M}anin-{M}umford, and {M}ordell-{L}ang, 2005.
\newblock Preprint.

\bibitem{qu2023arithmetic}
Binggang Qu and Hang Yin.
\newblock Arithmetic {D}emailly approximation theorem, 2023.
\newblock arXiv:2208.13230.

\bibitem{SibonyPanorama}
Nessim Sibony.
\newblock Dynamique des applications rationnelles de {$\bold P^k$}.
\newblock In {\em Dynamique et g\'{e}om\'{e}trie complexes ({L}yon, 1997)}, volume~8 of {\em Panor. Synth\`eses}, pages ix--x, xi--xii, 97--185. Soc. Math. France, Paris, 1999.

\bibitem{Stoll}
Michael Stoll.
\newblock Simultaneous torsion in the {L}egendre family.
\newblock {\em Exp. Math.}, 26(4):446--459, 2017.

\bibitem{SUZ97}
L.~Szpiro, E.~Ullmo, and S.~Zhang.
\newblock \'{E}quir\'{e}partition des petits points.
\newblock {\em Invent. Math.}, 127(2):337--347, 1997.

\bibitem{Tanlei}
Lei Tan.
\newblock Similarity between the {M}andelbrot set and {J}ulia sets.
\newblock {\em Comm. Math. Phys.}, 134(3):587--617, 1990.

\bibitem{Ullmo98}
Emmanuel Ullmo.
\newblock Positivit\'{e} et discr\'{e}tion des points alg\'{e}briques des courbes.
\newblock {\em Ann. of Math. (2)}, 147(1):167--179, 1998.

\bibitem{ICCMYuan}
Xinyi Yuan.
\newblock Algebraic dynamics, canonical heights and {A}rakelov geometry.
\newblock In {\em Fifth {I}nternational {C}ongress of {C}hinese {M}athematicians. {P}art 1, 2}, volume 51, pt. 1, 2 of {\em AMS/IP Stud. Adv. Math.}, pages 893--929. Amer. Math. Soc., Providence, RI, 2012.

\bibitem{yuan2023big}
Xinyi Yuan.
\newblock Arithmetic bigness and a uniform bogomolov-type result, 2023.
\newblock arXiv:2108.05625.

\bibitem{YZadelic}
Xinyi Yuan and Shou-Wu Zhang.
\newblock Adelic line bundles on quasi-projective varieties, 2024.
\newblock arXiv:2105.13587.

\bibitem{Zannier12}
Umberto Zannier.
\newblock {\em Some problems of unlikely intersections in arithmetic and geometry}, volume 181 of {\em Annals of Mathematics Studies}.
\newblock Princeton University Press, Princeton, NJ, 2012.
\newblock With appendixes by David Masser.

\bibitem{ZhangEquidistribution98}
Shou-Wu Zhang.
\newblock Equidistribution of small points on abelian varieties.
\newblock {\em Ann. of Math. (2)}, 147(1):159--165, 1998.

\bibitem{ZhangICM}
Shou-Wu Zhang.
\newblock Small points and {A}rakelov theory.
\newblock In {\em Proceedings of the {I}nternational {C}ongress of {M}athematicians, {V}ol. {II} ({B}erlin, 1998)}, pages 217--225, 1998.

\bibitem{ZHANGjams}
Shouwu Zhang.
\newblock Positive line bundles on arithmetic varieties.
\newblock {\em J. Amer. Math. Soc.}, 8(1):187--221, 1995.

\bibitem{zhangJAG1995}
Shouwu Zhang.
\newblock Small points and adelic metrics.
\newblock {\em J. Algebraic Geom.}, 4(2):281--300, 1995.

\end{thebibliography}

\end{document}